\title{Norm coherence for descent of level structures\\on formal deformations}
\author{Yifei Zhu}
\address{Department of Mathematics\\Southern University of Science and 
         Technology\\Shenzhen\\Guangdong 518055 China}
\email{zhuyf@sustc.edu.cn}
\newtheorem{thm}[equation]{Theorem}
\newtheorem{cor}[equation]{Corollary}
\newtheorem{prop}[equation]{Proposition}
\newtheorem{lem}[equation]{Lemma}
\theoremstyle{definition}
\newtheorem{defn}[equation]{Definition}
\theoremstyle{remark}
\newtheorem{rmk}[equation]{Remark}
\newtheorem{subsec}[equation]{\hskip -.02cm}
\newtheorem{ex}[equation]{Example}
\newcommand{\Aut}{{\rm Aut}}
\newcommand{\BA}{{\mb A}}
\newcommand{\BC}{{\mb C}}
\newcommand{\BD}{{\mb D}}
\newcommand{\bE}{{\text{\bf \em E}}}
\newcommand{\BF}{{\mb F}}
\newcommand{\Bf}{{\text{\bf \em f}}}
\newcommand{\bF}{{\text{\bf \em F}}}
\newcommand{\BL}{{\mb L}}
\newcommand{\BP}{{\mb P}}
\newcommand{\BW}{{\mb W}}
\newcommand{\BZ}{{\mb Z}}
\newcommand{\CA}{{\cal A}}
\newcommand{\CD}{{\cal D}}
\newcommand{\CF}{{\cal F}}
\newcommand{\cF}{\overline {\mb F}}
\newcommand{\CG}{{\cal G}}
\newcommand{\CH}{{\cal H}}
\newcommand{\CI}{{\cal I}}
\newcommand{\CK}{{\cal K}}
\newcommand{\CL}{{\cal L}}
\newcommand{\ck}{\overline k}
\newcommand{\CO}{{\cal O}}
\renewcommand{\co}{{\colon\,}}
\newcommand{\Coord}{{\rm Coord}}
\newcommand{\CP}{{\cal P}}
\newcommand{\Def}{{\rm Def}}
\newcommand{\eg}{{\em e.g.}}
\newcommand{\Eoo}{E$_\infty$}
\newcommand{\FG}{{\rm FG}}
\newcommand{\fm}{{\mf m}}
\newcommand{\Frob}{{\rm Frob}}
\newcommand{\GE}{{\CG_{_\bE}}}
\newcommand{\gE}{{G_{_\bE}}}
\newcommand{\GF}{{\CG_{_\bF}}}
\newcommand{\Gf}{{\CG_{_\Bf}}}
\newcommand{\Hoo}{H$_\infty$}
\newcommand{\ib}{{\em ibid.}}
\newcommand{\ie}{{\em i.e.}}
\newcommand{\iso}{{\rm iso}}
\newcommand{\isog}{{\rm isog}}
\newcommand{\lc}{{\em loc.~cit.}}
\newcommand{\mb}[1]{\mathbb{#1}}
\newcommand{\mf}[1]{\mathfrak{#1}}
\newcommand{\NCoh}{{\rm NCoh}}
\newcommand{\Norm}{{\rm Norm}}
\newcommand{\PE}{{\pi_0\,\bE}}
\newcommand{\pt}{{\rm pt}}
\newcommand{\SE}{{S_{_\bE}}}
\newcommand{\Set}{{\rm Set}}
\newcommand{\SF}{{S_{_\bF}}}
\newcommand{\Sf}{{S_{_\Bf}}}
\newcommand{\Sp}{{\rm Sp}}
\newcommand{\Spf}{{\rm Spf\,}}
\newcommand{\st}{\stepcounter{equation} \tag{\theequation}}
\newcommand{\md}{~~{\rm mod}~}
\newcommand{\ad}{\text{and}}
\newcommand{\id}{{\rm id}}
\newcommand{\tr}{{\rm tr}}
\newcommand{\univ}{{\rm univ}}
\renewcommand{\ker}{{\rm ker}}
\newcommand{\A}{\alpha}
\newcommand{\B}{\beta}
\renewcommand{\d}{\delta}
\newcommand{\f}{\phi}
\newcommand{\G}{\Gamma}
\newcommand{\g}{\gamma}
\newcommand{\ce}{\coloneqq}
\newcommand{\lb}{\llbracket}
\newcommand{\rb}{\rrbracket}
\newcommand{\<}{\langle}
\renewcommand{\>}{\rangle}
\newcommand{\wt}[1]{\textcolor{white}{#1} \!~}
\renewcommand{\tt}[1]{\bf #1~\,}
\newcommand{\stareq}{\stackrel{\star}{=}}
\newcommand{\td}{\tilde{\d}}
\newcommand{\tE}{\widetilde{E\,}\!}
\newcommand{\tF}{\widetilde{F\,}\!}
\newcommand{\tR}{\widetilde{R\,}\!}
\numberwithin{equation}{section}
\renewcommand{\theequation}{\thesection.\arabic{equation}}
\begin{document}

\begin{abstract}
 We give a formulation for descent of level structures 
 on deformations of formal groups, and study the compatibility between 
 the descent and a norm construction.  
 Under this framework, 
 we generalize Ando's construction of \Hoo\,complex orientations for 
 Morava E-theories associated to Honda formal group laws over $\BF_p$.  We 
 show the existence and uniqueness of such an orientation for any Morava 
 E-theory associated to a formal group law over an algebraic extension of 
 $\BF_p$ and, in particular, orientations for a family of elliptic cohomology theories.  
 These orientations correspond to coordinates on deformations of formal 
 groups which are compatible with norm maps along descent.  
\end{abstract}

\maketitle

\section{Introduction}

\begin{subsec}{\tt{Algebraic motivations and statement of results}}
 Let $R$ be a commutative ring with $1$ and let $A$ be an algebra over $R$.  
 Suppose that, as an $R$-module, $A$ is finitely generated and free.  
 The norm of $A$ is a map $A \to R$ which sends $a$ to $\det(a \cdot)$, 
 the determinant of multiplication by $a$ as an $R$-linear transformation on $A$.  
 It is multiplicative but not additive in general.   
 Such norms appear as an important ingredient in various contexts: 
 arithmetic moduli of elliptic curves \cite[\S 1.8, \S 7.7]{KM}, 
 actions of finite group schemes on abelian varieties \cite[\S 12]{AV}, 
 isogenies of one-parameter formal Lie groups over $p$-adic integer rings 
 \cite[\S 1]{Lubin67}.  These norm maps are closely related to construction of 
 {\em quotient objects}.  
 
 It is the purpose here to examine 
 an interaction between norms and the corresponding 
 {\em subobjects}---more precisely, 
 a functorial interaction with chains of subobjects---in the 
 context of Lubin and Tate's formal deformations \cite{LT}.  
 The functoriality amounts to descent of ``level structures'' on deformations 
 (see \S \ref{sec:nc} and \S \ref{sec:ortn}).   
 In this paper, a level structure on a formal group is a choice of 
 finite subgroup scheme, 
 from which we obtain a quotient morphism of formal groups.  
 A norm map between their rings of functions then gets 
 involved in making this quotient morphism into a homomorphism of 
 formal group {\em laws} \eqref{subsec:quotient}.  
 This norm construction is compatible with successive quotient 
 along a chain of subgroups.  
 
 On the other hand, given a {\em deformation} over a $p$-adic integer ring, 
 there is a canonical (\ie, coordinate-free) 
 descent of level structures via Lubin and Tate's universal deformations.  
 Strickland studied the representability of this moduli problem \cite{Str97} 
 so that the descent can be realized as canonical lifts of Frobenius morphisms 
 \eqref{subsec:canlift}.  
 
 Our main result shows the existence and uniqueness of deformations of 
 formal group laws on which the canonical lifts of Frobenius coincide with 
 quotient homomorphisms from the norm construction.  
 We call these deformations {\em norm-coherent} (see \S \ref{sec:nc}, 
 specifically Definition \ref{def:nc}).  
 
 \begin{thm}[{cf.~Theorem \ref{thm:ideal} and Proposition \ref{prop:nc}}]
  \label{thm}
  Let $k$ be an algebraic extension of $\BF_p$, 
  $R$ a complete local ring with residue field containing $k$, 
  $G$ a formal group law over $k$ of finite height, 
  and $F$ a deformation of $G$ to $R$.  
  There exists a unique formal group law $F'$ over $R$, 
  $\star$-isomorphic to $F$, 
  which is norm-coherent.  
  Moreover, when $F$ is a Lubin-Tate universal deformation, 
  $F'$ is functorial under base change of $G / k$, 
  under $k$-isogeny out of $G$, and under $k$-Galois descent.  
 \end{thm}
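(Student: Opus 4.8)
The plan is to build $F'$ by producing a coordinate on the Lubin--Tate universal deformation that is forced at every finite level by the norm construction, and then to descend this coordinate to an arbitrary deformation $F/R$ along the classifying map. Recall that a $\star$-isomorphism class of formal group laws lifting $G$ is the same datum as a coordinate on the universal deformation $\mathbf{G}_{\mathrm{univ}}$ over the Lubin--Tate ring $E_0$, so it suffices to treat the universal case and then pull back. For each $n \ge 0$ the canonical subgroup data of height-$n$ level structures on $\mathbf{G}_{\mathrm{univ}}$ are represented (Strickland) by finite flat $E_0$-algebras, and the associated canonical lift of Frobenius $\psi_n \colon \mathbf{G}_{\mathrm{univ}} \to (\mathbf{G}_{\mathrm{univ}})/H_n$ is a coordinate-free morphism of formal groups. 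The condition of norm-coherence asks precisely that, in the chosen coordinate, each $\psi_n$ equals the quotient homomorphism of formal group \emph{laws} obtained from the norm construction of \eqref{subsec:quotient}.

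First I would set up the recursion: given a coordinate modulo the $n$-th level (equivalently, a compatible system of quotient laws through height $n-1$), the norm construction applied to the universal order-$p$ subgroup over the $n$-th Lubin--Tate level produces a unique extension of the coordinate to level $n$ for which $\psi_n$ is the norm quotient homomorphism, because the norm quotient is a homomorphism of formal group \emph{laws} (not merely formal groups) and thus rigidifies the coordinate on the target. Here one uses the compatibility of the norm construction with successive quotients along a chain of subgroups, cited in the introduction, to check that the level-$n$ choice restricts to the level-$(n-1)$ choice. Taking the inverse limit over $n$ (the Lubin--Tate ring is complete, and the tower of level structures is cofinal in the relevant sense, so a coherent system of coordinates modulo each level assembles to an honest coordinate) yields a formal group law $F'_{\mathrm{univ}}$ over $E_0$, $\star$-isomorphic to $\mathbf{G}_{\mathrm{univ}}$, on which every canonical lift of Frobenius agrees with a norm quotient homomorphism. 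Uniqueness is immediate from the recursion, since at each stage the coordinate extension was unique. For a general deformation $F/R$ with classifying map $\phi \colon E_0 \to R$, set $F' \coloneqq \phi_* F'_{\mathrm{univ}}$; naturality of both the canonical lifts of Frobenius (Strickland representability) and the norm construction under base change shows $F'$ is norm-coherent, and the same uniqueness passes down.

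The functoriality clauses I would handle by the same ``rigidity of laws'' principle. Base change of $G/k$ along a map $k \to k'$ induces a map of Lubin--Tate rings compatible with the level towers and with the norm construction (norms commute with flat base change of the coefficient ring), so $F'_{\mathrm{univ}}$ is carried to the corresponding object over the new Lubin--Tate ring; for $k$-isogenies out of $G$ one uses that an isogeny $G \to \bar G$ of finite height over $k$ induces a map on universal deformations realizing one level tower as a cofinal sub-tower of the other, under which the norm quotients match by the chain-compatibility of the norm construction; for $k$-Galois descent one observes that the construction of $F'_{\mathrm{univ}}$ is canonical (no choices were made beyond those forced by the norms), hence $\Gal$-equivariant, so it descends. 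In each case the point is that $F'_{\mathrm{univ}}$ is characterized by a \emph{universal property} relative to the norm construction, and a canonically characterized object is automatically functorial.

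The main obstacle I anticipate is the inductive step: verifying that the norm quotient homomorphism of formal group laws is genuinely \emph{well-defined} (independent of the auxiliary coordinate on the source used to compute the determinant) and that it is \emph{compatible} with the previously constructed coordinate at level $n-1$ rather than merely $\star$-isomorphic to it. This is where the detailed content of \eqref{subsec:quotient} and the norm-coherence definition (Definition \ref{def:nc}) must be used carefully, together with Strickland's identification of the canonical lift of Frobenius with the canonical subgroup quotient; getting the bookkeeping right across the tower --- so that ``coherent modulo each level'' really does assemble in the limit --- is the crux, and I expect it to be the technical heart of Theorem \ref{thm:ideal} and Proposition \ref{prop:nc}.
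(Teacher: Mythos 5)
Your overall architecture (reduce to the universal deformation, pull back along the classifying map for a general $F/R$, handle functoriality by canonicity) matches the paper's, but the engine of the existence proof is missing, and the ``recursion over levels'' you propose does not have a well-defined inductive step. The difficulty is that the target of the canonical lift of Frobenius is not a bare formal group awaiting a coordinate that the norm can supply: for $\CH \subset \CF_\univ$ of degree $p^r$, the quotient $\CF_\univ/\CH$ is canonically identified (via Strickland's classifying map $t_r$) with a base change of $\CF_\univ$ itself, and hence already carries the coordinate $t_r^*\,x$ induced from the very coordinate $x$ you are trying to construct. Norm coherence is the demand that this pre-existing coordinate coincide with the norm pushforward $\Norm_{f_{_H}^*}(x)$; equivalently, that the $\star$-isomorphism $g_{_H}$ of \eqref{fgl} be the identity. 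Already for $\CH = \CF[p]$ this is a twisted fixed-point equation on the single coordinate $x$ (the twist being the local automorphism $\A_n t_n$ of $E_n$ lifting Frobenius), not a sequence of free ``extensions'' that the norm rigidifies one level at a time. Your step ``the norm construction \dots produces a unique extension of the coordinate to level $n$'' therefore begs the question: there is nothing to extend, only an equation to solve, and your proposal offers no mechanism for solving it.

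The paper solves this equation by successive approximation in the $I$-adic topology on $E_n$ (Lemma \ref{klem}): writing $g_p^y(t) = t + a(t)$ with $a \equiv 0 \bmod I^{r-1}$, the corrected coordinate $z = \d(y)$ with $\d(t) = t - a^{(-p^n)}(t)$ satisfies $g_p^z \equiv \id \bmod I^r$, and completeness together with Krull's intersection theorem gives convergence; uniqueness is a similar comparison modulo $\fm^{r_0}$. A second, separate step is also absent from your proposal: having arranged $l_p = f_p$ for the single subgroup $\CF[p]$, one must still deduce $l_{_H} = f_{_H}$ for every finite $\CH$. This is the content of the proof of Proposition \ref{prop:nc}, which uses the subgroup $p^{-1}\CH$ containing both $\CH$ and $\CF[p]$ and the compatibility of $f$ and $g$ with successive quotients --- it is not the bookkeeping issue you flag at the end but a genuine reduction that replaces your level-by-level induction. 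Your functoriality paragraph is essentially right in outline (it matches Proposition \ref{prop:fun}, \eqref{subsec:univ} and Theorem \ref{thm:ideal}), but it rests on an existence and uniqueness statement that your argument does not establish.
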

 
 \begin{rmk}
  In the context of local class field theory via Lubin-Tate theory, 
  Coleman's norm 
  operator is used to compute norm groups \cite[Theorem 11]{Coleman}.  
  Walker observed its similarity to the norm construction above 
  \cite[Chapter 5]{Walker}.  Specifically, he 
  reformulated the norm-coherence condition (for a special case) 
  in terms of a particular way in which Coleman's 
  norm operator acts [\ib, 5.0.10].  
  It would be interesting to understand this connection in view of Theorem \ref{thm}.  
 \end{rmk}
\end{subsec}

\begin{subsec}{\tt{Topological motivations and statement of results}}
 The relevance to topology (and, further, to geometry and mathematical physics) 
 of this functorial interaction between norms and finite formal subgroup schemes 
 lies, for instance, in having highly coherent multiplications for {\em genera}.  
 These are cobordism invariants of manifolds; such multiplications refine the 
 invariants by reflecting symmetries of the geometry.  
 
 A prominent example is the Witten genus for string manifolds, 
 which takes values in 
 the ring of integral modular forms of level 1.  
 Motivated by this, 
 Hopkins and his collaborators developed 
 highly-structured multiplicative {\em orientations} 
 (\ie, genera of {\em families} of manifolds) 
 for elliptic cohomology theories and 
 for a universal theory of topological modular forms 
 \cite{Hopkins95, Hopkins02}.  
 In particular, in \cite{AHS04}, 
 they showed that their sigma orientation $MU\<6\> \to \bE$
 for any elliptic cohomology $\bE$ is \Hoo, 
 a commutativity condition on its multiplicative structure 
 \eqref{subsec:EH}.  
 Their analysis of this \Hoo\!-structure was based on \cite{Ando95, Ando92} 
 where the algebraic condition of norm coherence had made a first appearance.  
 
 Theorem \ref{thm} produces \Hoo\,$MU\<0\>$-orientations for 
 a family of generalized cohomology theories called Morava E-theories 
 \eqref{subsec:E} 
 including those treated by Ando and by Ando, Hopkins, Strickland.  
 
 \begin{thm}[{cf.~Corollary \ref{cor}}]
  \label{thm:cor}
  Let $k$ and $G$ be as in Theorem \ref{thm}.  
  For the form of Morava E-theory associated to $G / k$, 
  there exists a unique $MU\<0\>$-orientation  that is an \Hoo\,map.  
 \end{thm}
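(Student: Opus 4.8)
The plan is to deduce Theorem~\ref{thm:cor} from Theorem~\ref{thm} by means of the Goerss--Hopkins--Miller obstruction-theoretic dictionary between $MU\<0\>$-orientations (i.e.\ complex orientations) of a Morava $E$-theory $\bE$ and coordinates on the universal deformation it classifies, together with the Ando--Hopkins--Strickland characterization of the \Hoo\ condition in terms of norm coherence. First I would recall that, for the Morava $E$-theory $\bE$ associated to $G/k$, an $MU\<0\>$-orientation is the same datum as a choice of coordinate $t$ on the formal group $\mb{G}_\bE$ over $\pi_0\bE$, which (because $\pi_0\bE$ is the Lubin--Tate ring and $\mb{G}_\bE$ is the universal deformation $F$ of $G$) is the same as presenting $F$ concretely as a formal group \emph{law} in the chosen coordinate --- that is, as a formal group law $\star$-isomorphic to a fixed representative of $F$. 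Thus the set of $MU\<0\>$-orientations is in bijection with the set of formal group laws over $\pi_0\bE$ that are $\star$-isomorphic to $F$.

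Next I would invoke the \Hoo\ criterion: an $MU\<0\>$-orientation is \Hoo\ precisely when the associated coordinate is compatible with the power operations on $\bE$, and the discussion in \S\ref{subsec:EH} together with the analysis recalled from \cite{AHS04, Ando95, Ando92} identifies this compatibility with the requirement that the quotient homomorphisms built from the norm construction of \eqref{subsec:quotient} agree with the canonical lifts of Frobenius of \eqref{subsec:canlift}. In other words, an $MU\<0\>$-orientation is \Hoo\ if and only if the corresponding formal group law is norm-coherent in the sense of Definition~\ref{def:nc}. Granting this translation, Theorem~\ref{thm:cor} is immediate: Theorem~\ref{thm} provides exactly one formal group law $F'$ over $\pi_0\bE$ that is $\star$-isomorphic to $F$ and norm-coherent, hence exactly one $MU\<0\>$-orientation of $\bE$ that is \Hoo.

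I would organize the write-up in two steps. Step one: spell out the equivalence ``complex orientation $\leftrightarrow$ coordinate on $\mb{G}_\bE$ $\leftrightarrow$ choice of formal group law $\star$-isomorphic to $F$,'' citing the relevant facts about Morava $E$-theory and its formal group (the universal deformation) from \S\ref{subsec:E}. Step two: cite the \Hoo\ recognition principle to rewrite ``\Hoo'' as ``norm-coherent,'' then apply Theorem~\ref{thm}. The main obstacle I anticipate is not any hard computation but making the second step precise: one must check that the \Hoo\ condition on the $MU\<0\>$-orientation, phrased via total power operations and the $\bE$-theory of symmetric groups, really does unwind to the \emph{pointwise} norm-coherence condition on the deformation (over every complete local $\pi_0\bE$-algebra, applied to the subgroups cutting out the relevant transfers), rather than some a priori weaker condition. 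This is precisely the content that \S\ref{sec:nc} and \S\ref{sec:ortn} are set up to supply, so the proof of the corollary should be short, with all the substance residing in Theorem~\ref{thm} and in the norm-coherence reformulation of \Hoo\ developed earlier in the paper.
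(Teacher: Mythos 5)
Your proposal is correct and follows essentially the same route as the paper: the paper deduces Corollary \ref{cor} by combining the bijection between $MU\<0\>$-orientations and coordinates on $\GE$, the Ando--Hopkins--Strickland \Hoo\ criterion (whose condition is matched with the norm-coherence condition \eqref{l=f} in Remark \ref{rmk:norm}), and the existence and uniqueness of norm-coherent coordinates from Proposition \ref{prop:nc}. Two cosmetic remarks: the orientation--coordinate correspondence is an elementary fact about complex orientations (cited from Ando and from Ando--Hopkins--Strickland) rather than a Goerss--Hopkins--Miller obstruction-theoretic statement, and the paper also records the hypothesis that $p$ be a non-zero-divisor in $\PE$ needed to invoke the \Hoo\ recognition result of Ando, Hopkins, and Strickland.
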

 
 \begin{rmk}
  Rezk reminded us that the sigma orientations do not factor through 
  these \Hoo\,$MU\<0\>$-orientations \eqref{subsec:intro}.  
  On the other hand, the coefficient ring of an E-theory (of height $2$) 
  is a certain completion of a ring of modular forms.  
  In \cite{ho}, as a first step, 
  we related its elements to certain quasimodular forms 
  (and to mock modular forms) 
  via Rezk's logarithmic operations; 
  see also \cite[remarks following Theorem 1.29]{fkl'}.  
  Given Theorem \ref{thm:cor}, 
  it would be interesting to have 
  more exotic manifold invariants.  
  In particular, we may investigate an analogue of the modular 
  invariance of a sigma orientation \cite[1.3]{AHS01} in view of the 
  uniqueness above.  
 \end{rmk}
 
 \begin{rmk}
  A natural question is whether there exist \Eoo\,complex orientations for 
  Morava E-theories and, more specifically, whether the orientation in 
  Theorem \ref{thm:cor} rigidifies to be an \Eoo\,map.  
  See \cite{Hopkins-Lawson} for recent progress on \Eoo\,complex 
  orientations, where the norm-coherence condition comes up.  
 \end{rmk}
\end{subsec}

Finally, the expositions in \cite{Rezk-vid-4} and \cite[esp.~\S 4]{fkl'} 
provide some other perspectives.  
See also \cite[esp.~\S 29]{FPFP}.  

\begin{subsec}{\tt{Outline of the paper}}
 In \S \ref{sec:general}, we recall some basic concepts 
 from the theory of formal groups and homotopy theory, 
 particularly quotient of formal groups 
 \eqref{subsec:quotient}, and set their notation.  
 
 In \S \ref{sec:widecat}, following a suggestion of Rezk, 
 we introduce an enlarged category of formal groups (cf.~\cite[\S 4.1]{KM}).  
 This viewpoint will be helpful in clarifying 
 deformations of Frobenius (\ref{pushforward}, \ref{subsec:univ}), 
 descent of level structures (\ref{l_p}, \ref{l_H}), 
 the norm-coherence condition \eqref{x_H}, 
 and functoriality of norm coherence \eqref{subsec:morefun}.  
 
 \S \ref{sec:defofgl} and \S \ref{sec:defofrob} give an 
 account for the theorems of Lubin, Tate \eqref{prop:LT} 
 and of Strickland \eqref{prop:S} on deformations of formal groups.  
 Our formulation follows Rezk's (\eg, in \cite[\S 4]{iph}) 
 but emphasizes formal group {\em laws}.  
 The purpose of these two sections is to provide a detailed exposition as well as 
 a precise setup that is crucial for the notion of norm coherence 
 to follow in desired generality.  
 
 In \S \ref{sec:nc}, we introduce 
 the central notion of this paper, norm coherence 
 (\ref{subsec:ncdef}-\ref{rmk:nc}), building on 
 Ando's framework \cite[\S 2]{Ando95}.  
 We then generalize his theorem and prove Theorem \ref{thm} 
 in \S \ref{sec:nc'}.  Our main results are Proposition 
 \ref{prop:nc} and Theorem \ref{thm:ideal}, the latter stated in 
 a form suggested by Rezk.  
 
 \S \ref{sec:ortn} discusses the corresponding topological result for 
 complex orientations, with \eqref{subsec:intro} introducing further background 
 on work of Ando, of Ando, Hopkins, Strickland, and of Ando, Strickland.  
 In (\ref{subsec:descent}-\ref{rmk:norm}), we compare the setup for our results above 
 with Ando, Hopkins, and Strickland's descent data and norm maps 
 \cite[Parts 1 and 3]{AHS04}.  
 The purpose is to continue the exposition from \S \ref{sec:defofrob} 
 while proving Theorem \ref{thm:cor}.  
\end{subsec}

\begin{subsec}{\tt{Acknowledgements}}
 This paper originated from a referee's comment on the choice of coordinates 
 in one of my earlier works.  I thank the anonymous referee for their 
 demand for precision on specifics.  
 
 I thank Anna Marie Bohmann, Paul Goerss, Fei Han, 
 Michael Hill, Tyler Lawson, 
 Niko Naumann, and Eric Peterson for helpful discussions.  
 I thank Zhen Huan for the quick help with locating a reference.  
 
 I learned most of what I know about norm coherence 
 and related questions from Charles Rezk.  
 A good deal of the theory presented here was developed 
 in discussions with him, including ``norm-coherent.''  
 The term is my choice over the synonym ``Ando'' and 
 it is Matthew Ando who originally discovered this condition in algebra and 
 applied it to topology.  
 
 I thank Eric Peterson for the feedback on a draft of this paper, and for 
 explaining to me the results and methods of his joint work with Nathaniel 
 Stapleton, which gives a different approach to questions considered here.  
\end{subsec}

\begin{subsec}{\tt{Conventions}}
 Unless explicitly noted, we fix a prime $p$ throughout this paper.  
 
 We often omit the symbol $\Spf$\! and simply write $R$ for $\Spf R$ 
 when it appears as a base scheme.  
 In particular, $\B^*$ means base change from $R$ to $S$ along 
 $\B \co R \to S$, understood as $\B \co \Spf S \to \Spf R$.  
 
 We also write $\psi^*$ for the pullback of functions 
 along a morphism $\psi$ of schemes.  
\end{subsec}

\section{General notions}
\label{sec:general}

\begin{subsec}{\tt{Formal groups, coordinates, and formal group laws}}
 \label{subsec:fgl}
 Let $R$ be a complete local ring with residue characteristic 
 $p > 0$.  A {\em formal group $\CG$ over $R$} is a group object in the category 
 of formal $R$-schemes.  In this paper, all formal groups will be commutative, 
 one-dimensional, and affine.  They can be viewed as covariant functors from the 
 category of complete local $R$-algebras (and local homomorphisms) to 
 the category of abelian groups.  
 
 A {\em coordinate $x$ on $\CG$} is a natural isomorphism 
 $\CG \xrightarrow{\sim} \hat{\BA}^1_R$ of functors to pointed sets.  It gives 
 an isomorphism $\G(\CG, \CO_G) \cong R \lb x \rb$ of augmented 
 $R$-algebras, as well as a 
 trivialization of the ideal sheaf $\CI_G(O) = \CO_G(-O)$ 
 of functions on $\CG$ which vanish 
 at the identity section $O$.   
 Here and throughout the paper, we remove 
 the calligraphic effect of the notation for a formal group whenever it appears 
 as a subscript.  We will also simply write $\CO_G$ for the ring of global sections 
 of $\CO_G$, and similarly for other sheaves.  
 
 A {\em (one-dimensional commutative) formal group law $F$ over $R$} is a formal 
 power series in two variables $t_1$ and $t_2$ with coefficients in $R$, often 
 written $t_1 +_F t_2$, which satisfies a set of abelian-group-like axioms.  In 
 particular, the above data of $\CG$ and $x$ determines a formal group law 
 $G = G_x$ such that 
 \[
  x(P_1) \underset{G}{+} x(P_2) = x (P_1 + P_2) 
 \]
 for any $R$-points $P_1$ and $P_2$ on $\CG$ (where we identify an $R$-point 
 on $\hat{\BA}^1_R$ with an element in the maximal ideal of $R$).  Conversely, given a formal group 
 law $F$, it determines a formal group $\CF = \Spf R \lb x_{_F} \rb$ in a 
 similar way.  
\end{subsec}

\begin{subsec}{\tt{Subgroups and isogenies}}
 By {\em (finite) subgroups} of a formal group over $R$, we mean finite 
 flat closed subgroup schemes.  Their points are 
 often defined over an extension $\tR$ of $R$.  
 
 An {\em isogeny} $\psi \co \CG \to \CG'$ over $R$ 
 is a finite flat morphism of formal groups.  
 Along $\psi^*$, $\CO_G$ becomes a free $\CO_{G'}$-module of finite rank $d$, 
 called the {\em degree of $\psi$}.  Since 
 the residue characteristic of $R$ is $p$, $d$ must be a power of $p$.  
 
 Suppose $x$ and $x'$ are coordinates on $\CG$ and $\CG'$.  
 Then $\psi$ induces a homomorphism $G_x \to G'_{x'}$ 
 of formal group laws, \ie, $h(t) \in t \cdot R \lb t \rb$ such that 
 \[
  h \big( t_1 \underset{G_x}{+} t_2 \big) = h(t_1) \underset{G'_{x'}}{+} h(t_2) 
 \]
 In fact, $h(x) = \psi^*(x')$ and sometimes we will abuse notation by writing 
 $\psi$ for $h$.  
 We will also denote this homomorphism by $\psi \co G \to G'$ and say it is an isogeny of 
 degree $d$ (cf.~\cite[1.6]{Lubin67}).  
 By Weierstrass preparation, $h = m n$ with $m \in 
 R[t]$ monic of degree $d$ and $n \in R \lb t \rb$ invertible.  
 
\end{subsec}

\begin{subsec}{\tt{Kernels and quotients}}
 \label{subsec:quotient}
 The notions of subgroups and of isogenies are connected as follows.  
 
 Given $\psi \co \CG \to \CG'$ as above, its {\em kernel $\CK$} is defined by 
 $\CO_K = \CO_G \otimes_{\CO_{G'}} R$, where the tensor product is taken along 
 $\psi^*$ and the augmentation map of $\CO_{G'}$.  
 It is naturally a subgroup of $\CG$ 
 and has degree $d$ as an effective Cartier divisor in $\CG$.  
 
 Conversely, given a subgroup $\CH \subset \CG$ over $\tR$ of degree $p^r$, 
 there is a corresponding isogeny $f_{_H} \co \CG \to \CG / \CH$ 
 defined by an equalizer diagram 
 \[
  \begin{tikzpicture}[baseline={([yshift=-10pt]current bounding box.north)}, scale=.88]
   \node (L) at (0, 0) {$\CO_{G / H}$};
   \node (M) at (2, 0) {$\CO_G$};
   \node (R) at (4, 0) {$\CO_{G \times H}$};
   \draw [->] (L) -- node [above] {$\scriptstyle f_{_H}^*$} (M);
   \draw [transform canvas={yshift=0.5ex}, ->] (M) -- node [above] 
         {$\scriptstyle \mu^*$} (R);
   \draw [transform canvas={yshift=-0.5ex}, ->] (M) -- node [below] 
         {$\scriptstyle \pi^*$} (R);
  \end{tikzpicture}
 \]
 where $\mu, \pi \co \CG \times \CH \to \CG$ are the multiplication, 
 projection maps, and $\CG / \CH$ is naturally a formal group over $\tR$.  Moreover, 
 given a coordinate $x$ on $\CG$, 
 \[
  x_{_H} \ce \Norm_{\,f_{_H}^*}(x) 
 \]
 is a coordinate on $\CG / \CH$, 
 where $\Norm_{\,f_{_H}^*}(x)$ equals the determinant of multiplication by $x$ on $\CO_G$ 
 as a finite free $\CO_{G / H}$-module via $\,f_{_H}^*$.  Explicitly, 
 \begin{equation}
  \label{lubinisog}
  f_{_H}^*(x_{_H}) = \prod_{Q \in \CH(\tR)} \big( x \underset{G}{+} x(Q) \big) 
 \end{equation}
 By writing $\,f_{_H} \co G \to G / H$ as an isogeny of formal group laws, 
 we will always intend the above compatibility between corresponding coordinates.  
 Sometimes we write more specifically 
 \[
  f_{_H}^x \co G_x \to G_x / H \ce (G / H)_{x_{_H}} 
 \]
 
 Note that over the residue field of $R$, \eqref{lubinisog} becomes 
 \begin{equation}
  \label{defofrob}
  f_{_H}^*(x_{_H}) = x^{\,p^r} 
 \end{equation}
 as a formal group over a field of characteristic $p$ has exactly one subgroup of 
 degree $p^r$.  Thus $\,f_{_H}$ is a lift of the relative $p^r$-power Frobenius 
 isogeny.   
 
 For more details, see \cite[\S 1, esp.~Theorems 1.4, 1.5]{Lubin67}, 
 \cite[\S 5, esp.~Theorem 19]{Str97} (cf. Remark \ref{rmk:norm'} below), 
 and \cite[\S\S 2.1-2.2]{Ando95}.  
\end{subsec}

\begin{subsec}{\tt{Complex cobordism}}
 Let $MU\<0\>$ 
 be the Thom spectrum of the tautological (virtual) complex vector bundle over 
 $\BZ \times BU$.  We have $\pi_* MU\<0\> \cong \pi_* MU\,[\B^{\pm 1}]$ 
 with $|\B| = 2$.  More generally, 
 let $MU\<2 k\>$ be the Thom spectrum associated to the $(2 k - 1)$-connected 
 cover $BU\<2 k\> \to \BZ \times BU$.  
 
 The spectrum $MU\<0\>$ is often 
 written $MUP$ or $MP$ for ``periodic'' (as can be seen from its 
 homotopy groups).  In fact, 
 \[
  MU\<0\> = \bigvee_{m \in \BZ} \Sigma^{2 m} MU 
 \]
 so that $\pi_0\,MU\<0\>$ is the ring of cobordism classes 
 of even-dimensional stably almost complex manifolds.  This ring carries 
 the universal formal group law of Lazard by \cite[Theorem 2]{Quillen}.  
 
 The spectrum 
 $MU\<2\> = MU$. The homology of 
 $MU\<2 k\>$ is concentrated in even degrees if $0 \leq k \leq 3$.   
\end{subsec}

\begin{subsec}{\tt{Morava E-theories}}
 \label{subsec:E}
 Let $k$ be a perfect field of characteristic $p$, and $\CG$ be a formal group 
 over $k$ of finite height $n$.  Associated to this data, 
 there is a generalized cohomology theory, called a {\em Morava E-theory 
 (of height $n$ at the prime $p$)}.  
 It is represented by a ring spectrum 
 $\bE = \bE_n$.  The formal scheme $\GE \ce \Spf \bE^0 \BC \BP^\infty$ is naturally 
 a formal group over $\bE^0(\pt) = \PE$.  The above association requires that 
 $\GE$ be a Lubin-Tate universal deformation of $\CG / k$ (see 
 \S \ref{sec:defofgl} below).  We have 
 \[
  \pi_* \hskip .02cm \bE \cong \BW k \lb u_1, \ldots, u_{n - 1} \rb [u^{\pm 1}] 
 \]
 where $|u_i| = 0$ and $|u| = 2$.\footnote{For some purposes, 
 it is convenient to instead 
 have $\BW \hskip .02cm \ck$ or $|u| = -2$ in $\pi_* \hskip .02cm \bE$.}  
 
 Thus a Morava E-theory spectrum is a topological realization of a Lubin-Tate ring.  
 Strickland showed that $\bE^0 B\Sigma_{p^r} / I_\tr$ is 
 a finite free module over $\PE$, where $I_\tr$ is the ideal generated by the images of transfers 
 from proper subgroups of the symmetric group $\Sigma_{p^r}$ on $p^r$ letters.  
 Moreover, this ring classifies degree-$p^r$ subgroups of $\GE$ 
 \cite[Theorem 1.1]{Str98} (see \S \ref{sec:defofrob}).  
 Ando, Hopkins, and Strickland then assembled these into a 
 topological realization of descent data for level structures on $\GE$ in
 \cite[\S 3.2]{AHS04} (see \S \ref{sec:ortn}).  
 
 When $\CG$ is the formal group of a supersingular elliptic curve, 
 its corresponding E-theory (of height $2$) is an {\em elliptic cohomology theory}  
 \cite[Definition 1.2]{AHS01} via the Serre-Tate theorem.  
\end{subsec}

\begin{subsec}{\tt{Complex orientations for Morava E-theories}}
 A {\em complex orientation} for $\bE$ is 
 a coherent choice of Thom class in $\bE$-cohomology for every 
 complex vector bundle.  It amounts to the choice of a single class 
 $\xi \in \widetilde{\bE\,}\!^2 \BC \BP^\infty$ which restricts to $1$ under the 
 composite 
 \[
  \widetilde{\bE\,}\!^2 \BC \BP^\infty \to \widetilde{\bE\,}\!^2 \BC \BP^1 
  = \widetilde{\bE\,}\!^2 S^2 \cong \bE^0(\pt)
 \]
 Given a coordinate $x \in \CO_\gE = \bE^0 \BC \BP^\infty$, 
 as a trivialization for 
 $\CI_{G_{_\bE}}(O) = \widetilde{\bE\,}\!^0 \BC \BP^\infty$, 
 it corresponds to 
 an invertible class $u_x \in \widetilde{\bE\,}\!^0 \BC \BP^1 \cong \pi_2\,\bE$.  
 We then get a complex orientation for $\bE$ from 
 $x \cdot u_x^{-1} \in \widetilde{\bE\,}\!^2 \BC \BP^\infty$.  
 Conversely, we recover a coordinate on $\GE$ from a class $\xi$ above 
 and a generator for $\pi_2\,\bE$.  
 
 An {\em $MU\<0\>$-orientation for $\bE$} is a map $MU\<0\> \to \bE$ 
 of homotopy commutative ring spectra.  
 Consider the natural map 
 \[
  \BC \BP^\infty_+ \to (\BC \BP^\infty)^\CL \to \Sigma^2 MU \to MU\<0\> 
 \]
 where $\CL$ is the tautological line bundle over $\BC \BP^\infty$.  
 Composing with this, each $MU\<0\>$-orientation gives a generator of 
 $\widetilde{\bE\,}\!^0 \BC \BP^\infty$ and thus a coordinate on $\GE$.  
 In fact, the correspondence is a bijection (see 
 \cite[Proposition 1.10\,(ii)]{Ando00} and \cite[Corollary 2.50]{AHS01}).  
\end{subsec}

\begin{subsec}{\tt{\Eoo and \Hoo structures}}
 \label{subsec:EH}
 Let $\Sp$ be a complete and cocomplete category of spectra, 
 indexed over some universe, 
 with an associative and commutative smash product $\wedge$ 
 (\eg, the category of $\BL$-spectra in \cite[Chapter I]{EKMM}).  
 
 An {\em \Eoo\!-ring spectrum} is a commutative monoid in $\Sp$.  
 Equivalently, it is an algebra for the monad $\BD$ on $\Sp$ defined by 
 \[
  \BD(-) \ce \bigvee_{m \geq 0} \BD_m(-) \ce \bigvee_{m \geq 0} (-)^{\wedge m} / \Sigma_m 
 \]
 where $\Sigma_m$ is the symmetric group on $m$ letters acting on the $m$-fold 
 smash product.  
 
 Weaker than being \Eoo, an {\em \Hoo\!-ring spectrum} is a commutative monoid in the homotopy category 
 of $\Sp$.  
 It also has a description as an algebra for the monad which descends from $\BD$ 
 to the homotopy category.  
 In particular, there are power operations $D_m$ 
 on the homotopy groups of such a spectrum (see \cite[Chapter I]{H_infty}).  
 
 Complex cobordism $MU$ and its variants above are \Eoo\!-ring spectra 
 \cite[\S IV.2]{E_infty}.  
 Morava E-theories $\bE$ are also \Eoo\!-ring spectra \cite[Corollary 7.6]{GH}.  
 A morphism of \Eoo\!-ring (or \Hoo\!-ring) spectra is called an 
 {\em \Eoo\!} (or {\em \Hoo}) {\em map}.  
\end{subsec}

\section{Wide categories of formal groups}
\label{sec:widecat}

\begin{subsec}{\tt{The category $\FG$ and its subcategories}}
 Consider $\FG$ whose objects are formal groups 
 \[
  \begin{tikzpicture}[scale=.88]
   \node (T) at (0, 3) {$\CG$};
   \node (B) at (0, 0) {$\Spf k$};
   \draw [->] (T) -- node [right] {$\scriptstyle f$} (B);
  \end{tikzpicture}
 \]
 of finite height 
 over variable base fields of characteristic $p$, 
 and whose morphisms are cartesian squares 
 \begin{equation}
  \label{cartesian}
  \begin{tikzpicture}[baseline={([yshift=-10pt]current bounding box.north)}, scale=.88]
   \node (LT) at (0, 3) {$\CG$};
   \node (RT) at (4, 3) {$\CG'$};
   \node (LB) at (0, 0) {$\Spf k$};
   \node (RB) at (4, 0) {$\Spf k'$};
   \draw [->] (LT) -- node [above] {$\scriptstyle \Psi$} (RT);
   \draw [->] (LT) -- node [left] {$\scriptstyle f$} (LB);
   \draw [->] (RT) -- node [right] {$\scriptstyle f'$} (RB);
   \draw [->] (LB) -- node [above] {$\scriptstyle \B$} (RB);
  \end{tikzpicture}
 \end{equation}
 \ie, commutative squares such that the induced morphism of $k$-schemes 
 \begin{equation}
  \label{hom}
  \CG \xrightarrow{(\Psi, ~ f)} \CG' \underset{k'}{\times} k 
 \end{equation}
 is a homomorphism of formal groups over $k$.  
 We also have subcategories $\FG_\isog$ and $\FG_\iso$ when \eqref{hom} is 
 restricted to be an isogeny or isomorphism.  Write $\FG(k)$, $\FG_\isog(k)$, 
 and $\FG_\iso(k)$ for the 
 subcategories where the base field is fixed and $\B = \id$ in \eqref{cartesian}.  

 We think of $\FG$, $\FG_\isog$, $\FG_\iso$ as ``wide'' categories 
 given the factorization 
 \begin{equation}
  \label{widemor}
  \begin{tikzpicture}[baseline={([yshift=-10pt]current bounding box.north)}, scale=.88]
   \node (LT) at (0, 3) {$\CG$};
   \node (MT) at (4, 3) {$\CG' \times_{k'} k$};
   \node (RT) at (8, 3) {$\CG'$};
   \node (LB) at (0, 0) {$\Spf k$};
   \node (MB) at (4, 0) {$\Spf k$};
   \node (RB) at (8, 0) {$\Spf k'$};
   \draw [->] (LT) -- node [above] {$\scriptstyle$} (MT);
   \draw [->] (MT) -- (RT);
   \draw [->] (LT) -- node [left] {$\scriptstyle f$} (LB);
   \draw [->] (MT) -- (MB);
   \draw [->] (RT) -- node [right] {$\scriptstyle f'$} (RB);
   \draw [=] (LB) -- (MB);
   \draw [->] (MB) -- node [above] {$\scriptstyle \B$} (RB);
   \node at (4.5, 2.5) {$\lrcorner$}; 
  \end{tikzpicture}
 \end{equation}
\end{subsec}

\begin{ex}
 \label{ex:frob}
 For our purpose, a key example of morphisms in $\FG$ is the following, 
 where $\sigma$ is the absolute $p$-power Frobenius and $\Frob$ is the relative 
 one.  
 \begin{equation}
  \label{frob}
  \begin{tikzpicture}[baseline={([yshift=-10pt]current bounding box.north)}, scale=.88]
   \node (LT) at (0, 3) {$\CG$};
   \node (MT) at (4, 3) {$\CG^{(p)}$};
   \node (RT) at (8, 3) {$\CG$};
   \node (LB) at (0, 0) {$\Spf k$};
   \node (MB) at (4, 0) {$\Spf k$};
   \node (RB) at (8, 0) {$\Spf k$};
   \draw [->] (LT) -- node [above] {$\scriptstyle \Frob$} (MT);
   \draw [->] (LT) to [out = 30, in = 150]  node [above] {$\scriptstyle \sigma$} (RT);
   \draw [->] (MT) -- (RT);
   \draw [->] (LT) -- node [left] {$\scriptstyle f$} (LB);
   \draw [->] (MT) -- (MB);
   \draw [->] (RT) -- node [right] {$\scriptstyle f$} (RB);
   \draw [=] (LB) -- (MB);
   \draw [->] (MB) -- node [above] {$\scriptstyle \sigma$} (RB);
   \node at (4.5, 2.5) {$\lrcorner$}; 
  \end{tikzpicture}
 \end{equation}
 This is an endomorphism in $\FG_\isog$ on the object $\CG / k$.  
 Denote it by $\Phi$.  
 It is not a morphism in $\FG_\isog(k)$.  
 The composite $\Phi^r$ corresponds to the $p^r$-power Frobenius.  
\end{ex}

\begin{subsec}{\tt{Canonical factorization of $\Phi^r$ along an isogeny}}
 Given any $\psi \co \CG \to \CG'$ in $\FG_\isog(k)$, 
 necessarily of degree $p^r$ for some $r \geq 0$, 
 there is a unique factorization in $\FG_\isog$ of $\Phi^r$ 
 along $\psi$ as follows, where $\Phi^r = \Lambda_\psi \circ \psi$ 
 with $\Lambda_\psi$ in $\FG_\iso$.  
 \begin{equation}
  \label{factor}
  \begin{tikzpicture}[baseline={([yshift=-10pt]current bounding box.north)}, scale=.88]
   \node (LT) at (0, 3) {$\CG$};
   \node (MLT) at (4, 3) {$\CG'$};
   \node (MRT) at (8, 3) {$\CG^{(p^r)}$};
   \node (RT) at (12, 3) {$\CG$};
   \node (LB) at (0, 0) {$\Spf k$};
   \node (MLB) at (4, 0) {$\Spf k$};
   \node (MRB) at (8, 0) {$\Spf k$};
   \node (RB) at (12, 0) {$\Spf k$};
   \draw [->] (LT) -- node [above] {$\scriptstyle \psi$} (MLT);
   \draw [->] (MLT) -- node [above] {$\scriptstyle \sim$} (MRT);
   \draw [->] (MRT) -- (RT);
   \draw [->] (LT) -- (LB);
   \draw [->] (MLT) -- (MLB);
   \draw [->] (MRT) -- (MRB);
   \draw [->] (RT) -- (RB);
   \draw [double] (LB) -- (MLB);
   \draw [double] (MLB) -- (MRB);
   \draw [->] (MRB) -- node [above] {$\scriptstyle \sigma^r$} (RB);
   \draw [->] (MLT) to [out = 30, in = 150] node [above] {$\scriptstyle \Lambda_\psi$} (RT);
   \node at (8.5, 2.5) {$\lrcorner$}; 
  \end{tikzpicture}
 \end{equation}
 Correspondingly, between rings of functions on the formal groups, we have 
 \begin{equation}
  \label{factor'}
  \begin{tikzpicture}[baseline={([yshift=-10pt]current bounding box.north)}, scale=.88]
   \node (L) at (0, 0) {$\CO_G$};
   \node (ML) at (4, 0) {$\CO_{G'}$};
   \node (R) at (12, 0) {$\CO_G$};
   \draw [left hook->] (ML) -- node [above] {$\scriptstyle \psi^*$} (L);
   \draw [->] (R) -- node [above] {$\scriptstyle \Lambda_\psi^*$} (ML);
   \draw [->] (R) to [out = -165, in = -15] node [below] {$\scriptstyle \sigma^r$} (L);
  \end{tikzpicture}
 \end{equation}
 When $\psi$ is the relative Frobenius, the map $\Lambda_\psi^*$ has a simple description below.  

 \begin{lem}
 \label{lem:norm}
  Let $\psi = \Frob^r$ in \eqref{factor}.  Then $\Lambda_\psi^*$ coincides with 
  the norm of 
  $\CO_G$ as a finite free module over $\CO_{G'}$ along $\psi^*$; that is, 
  given any $a \in \CO_G$, $\Lambda_\psi^*(a)$ equals the determinant of 
  multiplication by $a$ as an $\CO_{G'}$-linear transformation on $\CO_G$.  
 \end{lem}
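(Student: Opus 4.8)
The plan is to verify the identity of the two ring maps $\Lambda_\psi^* \co \CO_G \to \CO_{G'}$ and $\Norm_{\psi^*}$ by working with explicit coordinates, reducing everything to the key computation \eqref{lubinisog}–\eqref{defofrob}. Fix a coordinate $x$ on $\CG$; then $\CO_G \cong k\lb x\rb$. When $\psi = \Frob^r \co \CG \to \CG^{(p)}{}^{\cdots} = \CG^{(p^r)}$, the target carries the coordinate $x^{(p^r)}$ (the base-changed coordinate along $\sigma^r$), and the kernel $\CH$ of $\Frob^r$ is the unique degree-$p^r$ subgroup of $\CG$. By \S\ref{subsec:quotient}, $\CG^{(p^r)}$ is identified with $\CG/\CH$ and the normalized coordinate $x_{_H} = \Norm_{f_{_H}^*}(x)$ satisfies $f_{_H}^*(x_{_H}) = x^{p^r}$, which is precisely the statement that $\Frob^{r\,*}$ sends $x^{(p^r)} \mapsto x^{p^r}$.

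First I would pin down $\Lambda_\psi^*$ from the defining relation $\sigma^r = \psi^* \circ \Lambda_\psi^*$ in \eqref{factor'}: since $\sigma^r \co \CO_G \to \CO_G$ is the absolute Frobenius $\sum a_i x^i \mapsto \sum a_i^{p^r} x^{p^r i}$ (using $|k$ of char $p$), and $\psi^*$ is the injection $k\lb x^{(p^r)}\rb \hookrightarrow k\lb x\rb$ determined by $x^{(p^r)} \mapsto x^{p^r}$, one reads off that $\Lambda_\psi^*$ must be the map $\sum a_i x^i \mapsto \sum a_i^{p^r} (x^{(p^r)})^i$, i.e.\ the ``twisted identity'' that raises coefficients to the $p^r$-th power and renames the variable. (Injectivity of $\psi^*$ guarantees uniqueness, so this candidate is forced.) Next I would compute $\Norm_{\psi^*}$ on the $\CO_{G'}$-basis $1, x, \dots, x^{p^r - 1}$ of $\CO_G$: for $a = x$, multiplication by $x$ in this basis is a companion-type matrix whose characteristic behaviour is governed by the relation $x^{p^r} = \psi^*(x^{(p^r)}) = x^{p^r}$ — more precisely the minimal relation expressing $x^{p^r}$ in terms of the basis, which over the residue field is simply $x^{p^r} = x^{(p^r)} \cdot 1$ after the identification. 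Its determinant is then $\pm x^{(p^r)}$, and tracking signs (the companion matrix of $T^{p^r} - c$ has determinant $(-1)^{p^r - 1}c$, which equals $c$ since we are in characteristic $p$, or $p^r$ is odd, or $p^r = 2$ forces $-1 = 1$) gives $\Norm_{\psi^*}(x) = x^{(p^r)}$. Since both $\Lambda_\psi^*$ and $\Norm_{\psi^*}$ are multiplicative and $k$-semilinear in a compatible way — or, more robustly, since both satisfy $\psi^* \circ (-) = \sigma^r$ and $\psi^*$ is injective — agreement on the topological generator $x$ (together with continuity and behaviour on scalars) forces agreement everywhere.

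The cleanest way to finish, avoiding any basis bookkeeping, is to invoke uniqueness directly: by \cite[\S1]{Lubin67} the norm $\Norm_{\psi^*}$ is itself a ring homomorphism $\CO_G \to \CO_{G'}$ (this is the classical fact that makes the quotient construction in \S\ref{subsec:quotient} work), and it satisfies $\psi^* \circ \Norm_{\psi^*} = \sigma^r$ because the composite, evaluated via \eqref{lubinisog}, is $\prod_{Q \in \CH}\bigl(x +_G x(Q)\bigr)$ restricted through $\psi^*$, which by \eqref{defofrob} is $x^{p^r}$ — matching $\sigma^r$ on the generator and hence, by multiplicativity and continuity, everywhere. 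Then the uniqueness clause of the factorization \eqref{factor} identifies $\Norm_{\psi^*}$ with $\Lambda_\psi^*$.

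The main obstacle I anticipate is purely bookkeeping: making sure the coordinate on $\CG^{(p^r)}$ used implicitly in \eqref{factor}–\eqref{factor'} is exactly the normalized coordinate $x_{_H}$ from \S\ref{subsec:quotient}, so that ``$f_{_H}^*(x_{_H}) = x^{p^r}$'' and ``$\psi^*(x^{(p^r)}) = \sigma^r(x)$'' are literally the same equation rather than merely analogous ones. Once that identification of coordinates is nailed down, the lemma is essentially immediate from \eqref{lubinisog} and the uniqueness in \eqref{factor}; the determinant computation is only needed if one prefers a self-contained argument over citing \cite{Lubin67} for multiplicativity of the norm.
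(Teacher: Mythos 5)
Your proposal is correct and is essentially the paper's own argument: compute the norm on the generator ($\Norm_{\psi^*}(x)=y$, via the product of roots or the companion matrix) and on scalars, deduce that $\psi^*\circ\Norm_{\psi^*}=\sigma^r$, and invoke the uniqueness of the factorization \eqref{factor}. One small correction to your justification: $\Norm_{\psi^*}$ is a ring homomorphism here not as a general fact underlying the quotient construction of \S\ref{subsec:quotient} (over $p$-adic rings the norm is multiplicative but \emph{not} additive), but because in characteristic $p$ the extension $k\lb x^{p^r}\rb\subset k\lb x\rb$ is purely inseparable, so that $\Norm_{\psi^*}\big(h(x)\big)=h^{(p^r)}(y)$ is visibly additive---this is the point the paper makes explicitly, and it is what licenses your passage from agreement on the generator $x$ (and on $k$) to agreement everywhere.
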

 \begin{proof}
  Let $\CO_G = k \lb x \rb$ and $\CO_{G'} = k \lb y \rb$ 
  with $\psi^*(y) = x^{\,p^r}$.  
  Write the norm map as $\Norm_{\psi^*} \co \CO_G \to \CO_{G'}$.  
  We have 
  \begin{equation}
   \label{norm}
   \Norm_{\psi^*}(x) = \prod_{i = 1}^{p^r} x_i = (-1)^{\,p^r + 1} y = y 
  \end{equation}
  and $\Norm_{\psi^*}(c) = c^{\,p^r}$, where $c \in k$ and $x_i$ are the roots of the minimal polynomial 
  of $x$ over $\CO_{G'}$.  
  Note that in characteristic $p$, the norm map is additive and hence 
  a local homomorphism.  Thus composing with the $k$-linear map 
  $\psi^*$, it becomes the absolute $p^r$-power Frobenius $\sigma^r$ as follows, 
  where $h^{(p^r)}$ is the series obtained by 
  twisting the coefficients of $h$ with the $p^r$-power Frobenius.  
  \begin{equation}
   \label{norm'}
   \begin{tikzpicture}[baseline={([yshift=-10pt]current bounding box.north)}, scale=.88]
    \node (LT) at (0, .7) {$k\lb x\rb$};
    \node (MLT) at (3.5, .7) {$k\lb y\rb$};
    \node (RT) at (10.5, .7) {$k\lb x\rb$};
    \draw [->] (MLT) -- node [above] {$\scriptstyle \psi^*$} (LT);
    \draw [->] (RT) -- node [above] {$\scriptstyle \Norm_{\psi^*}$} (MLT);
    \node (LB) at (0, 0) {$h^{(p^r)}(x^{\,p^r})$};
    \node (MLB) at (3.5, 0) {$h^{(p^r)}(y)$};
    \node (RB) at (10.5, 0) {$h(x)$};
    \draw [|->] (MLB) -- (LB);
    \draw [|->] (RB) -- (MLB);
   \end{tikzpicture}
  \end{equation}
  The claim then follows by the uniqueness of the factorization \eqref{factor}.  
 \end{proof}
\end{subsec}

\section{Deformations of formal group laws}
\label{sec:defofgl}

\begin{subsec}{\tt{Set-up}}
 \label{subsec:setup}
 Let $k$ be a field of characteristic $p > 0$, and $G$ be a formal group law over 
 $k$ of height $n < \infty$.  Let $R$ be a complete local ring 
 with maximal ideal $\fm$ and residue field $R / \fm \supset k$, and 
 let $\pi \co R \to R / \fm$ be the natural projection.  
\end{subsec}

\begin{subsec}{\tt{Deformations and deformation structures}}
 \label{subsec:ds}
 A {\em deformation of $G$ to $R$} is a triple $(F, i, \eta)$ consisting of a 
 formal group law $F$ over $R$, an inclusion $i \co k \hookrightarrow R / \fm$ of 
 fields, and an isomorphism $\eta \co \pi^* F \xrightarrow{\sim} i^* G$ of formal 
 group laws over $R / \fm$, as in the following commutative diagram.  
 The leftmost 
 column is supposed to ``deform'' or ``thicken'' the rightmost column.  
 \[
  \begin{tikzpicture}[scale=.88]
   \node (AA) at (3, 6) {$F$};
   \node (AB) at (6, 6) {$\pi^* F$};
   \node (AC) at (9, 6) {$i^* G$};
   \node (AD) at (12, 6) {$G$};
   \draw [->] (AB) -- (AA);
   \draw [->] (AB) -- node [above] {$\scriptstyle \eta$} (AC);
   \draw [->] (AC) -- (AD);
   \node (AA') at (3, 3) {$\Spf R$};
   \node (AB') at (6, 3) {$\Spf R / \fm$};
   \node (AC') at (9, 3) {$\Spf R / \fm$};
   \node (AD') at (12, 3) {$\Spf k$};
   \draw [->] (AB') -- node [above] {$\scriptstyle \pi$} (AA');
   \draw [double] (AB') -- (AC');
   \draw [->] (AC') -- node [above] {$\scriptstyle i$} (AD');
   \draw [->] (AA) -- (AA');
   \draw [->] (AB) -- (AB');
   \draw [->] (AC) -- (AC');
   \draw [->] (AD) -- (AD');
   \node at (5.5, 5.5) {$\llcorner$};
   \node at (9.5, 5.5) {$\lrcorner$};
  \end{tikzpicture}
 \]
 We call the pair $(i, \eta)$ a {\em deformation structure attached to $F$ 
 with respect to $G / k$}, and 
 may simply call $F$ a deformation of $G$ to $R$ if its deformation structure is 
 understood.  We also call the corresponding formal group $\CF$ a deformation of 
 the formal group $\CG$ to $R$.  
\end{subsec}

\begin{subsec}{\tt{Base change of deformation structures}}
 \label{subsec:bc}
 Let $\B \co R \to S$ be a local homomorphism, and $\bar{\B} \co R / \fm \to 
 S / {\mf n}$ be the induced map between residue fields.  Given a deformation 
 $(F, i, \eta)$ of $G$ to $R$, there is a deformation $(\B^* F, i', \eta)$ to $S$ 
 by base change along $\B$ such that the following diagram commutes.  
 We write $\B^* (i, \eta) \ce (i', \eta) = (\bar{\B} \circ i, \eta)$.  
 \[
  \begin{tikzpicture}[scale=.88]
   \node (AA) at (3, 6) {$\B^* F$};
   \node (AB) at (6, 6) {$\bar{\B}^* \pi^* F$};
   \node (AC) at (9, 6) {$i'^* G$};
   \node (AD) at (12, 6) {$G$};
   \node (BA) at (1.5, 4) {$F$};
   \node (BB) at (4.5, 4) {$\pi^* F$};
   \node (BC) at (7.5, 4) {$i^* G$};
   \node (BD) at (10.5, 4) {$G$};
   \draw [->] (AB) -- (AA);
   \draw [->] (AB) -- node [above] {$\scriptstyle \eta$} (AC);
   \draw [->] (AC) -- (AD);
   \draw [->] (BB) -- (BA);
   \draw [->] (BB) -- node [above] {$\scriptstyle \hskip .3cm \eta$} (BC);
   \draw [->] (BC) -- (BD);
   \draw [->] (AA) -- (BA);
   \draw [->] (AB) -- (BB);
   \draw [->] (AC) -- (BC);
   \draw [double] (AD) -- (BD);
   \node (AA') at (3, 3) {$\Spf S$};
   \node (AB') at (6, 3) {$\Spf S / {\mf n}$};
   \node (AC') at (9, 3) {$\Spf S / {\mf n}$};
   \node (AD') at (12, 3) {$\Spf k$};
   \node (BA') at (1.5, 1) {$\Spf R$};
   \node (BB') at (4.5, 1) {$\Spf R / \fm$};
   \node (BC') at (7.5, 1) {$\Spf R / \fm$};
   \node (BD') at (10.5, 1) {$\Spf k$};
   \draw [->] (AB') -- (AA');
   \draw [double] (AB') -- (AC');
   \draw [->] (AC') -- node [above] {$\scriptstyle ~~ i'$} (AD');
   \draw [->] (BB') -- node [above] {$\scriptstyle \pi$} (BA');
   \draw [double] (BB') -- (BC');
   \draw [->] (BC') -- node [above] {$\scriptstyle i$} (BD');
   \draw [->] (AA') -- node [left] {$\scriptstyle \B$} (BA');
   \draw [->] (AB') -- node [left] {$\scriptstyle \bar{\B}$} (BB');
   \draw [->] (AC') -- (BC');
   \draw [double] (AD') -- (BD');
   \draw [->] (AA) -- (AA');
   \draw [->] (AB) -- (AB');
   \draw [->] (AC) -- (AC');
   \draw [->] (AD) -- (AD');
   \draw [->] (BA) -- (BA');
   \draw [->] (BB) -- (BB');
   \draw [->] (BC) -- (BC');
   \draw [->] (BD) -- (BD');
  \end{tikzpicture}
 \]
\end{subsec}

\begin{subsec}{\tt{$\star$-isomorphisms}}
 Let $(F, i, \eta)$ and $(F', i', \eta')$ be deformations of $G / k$ to $R$.  A {\em 
 $\star$-isomorphism} $(F, i, \eta) \to (F', i', \eta')$ consists of an equality 
 $i = i'$ and an isomorphism $\psi \co F \xrightarrow{\sim} F'$ of formal group laws 
 over $R$ such that $\eta' \circ \pi^* \psi = \eta$, as in the following 
 commutative diagram.  
 \begin{equation}
  \label{stariso}
  \begin{tikzpicture}[baseline={([yshift=-10pt]current bounding box.north)}, scale=.88]
   \node (AA) at (3, 6) {$F$};
   \node (AB) at (6, 6) {$\pi^* F$};
   \node (AC) at (9, 6) {$i^* G$};
   \node (AD) at (12, 6) {$G$};
   \node (BA) at (1.5, 4) {$F'$};
   \node (BB) at (4.5, 4) {$\pi^* F'$};
   \node (BC) at (7.5, 4) {$i^* G$};
   \node (BD) at (10.5, 4) {$G$};
   \draw [->] (AB) -- (AA);
   \draw [->] (AB) -- node [above] {$\scriptstyle \eta$} (AC);
   \draw [->] (AC) -- (AD);
   \draw [->] (BB) -- (BA);
   \draw [->] (BB) -- node [above] {$\scriptstyle \hskip .3cm \eta'$} (BC);
   \draw [->] (BC) -- (BD);
   \draw [->] (AA) -- node [left] {$\scriptstyle \psi$} (BA);
   \draw [->] (AB) -- node [left] {$\scriptstyle \pi^* \psi$} (BB);
   \draw [double] (AC) -- (BC);
   \draw [double] (AD) -- (BD);
   \node (AA') at (3, 3) {$\Spf R$};
   \node (AB') at (6, 3) {$\Spf R / \fm$};
   \node (AC') at (9, 3) {$\Spf R / \fm$};
   \node (AD') at (12, 3) {$\Spf k$};
   \node (BA') at (1.5, 1) {$\Spf R$};
   \node (BB') at (4.5, 1) {$\Spf R / \fm$};
   \node (BC') at (7.5, 1) {$\Spf R / \fm$};
   \node (BD') at (10.5, 1) {$\Spf k$};
   \draw [->] (AB') -- node [above] {$\scriptstyle \pi$} (AA');
   \draw [double] (AB') -- (AC');
   \draw [->] (AC') -- node [above] {$\scriptstyle i$} (AD');
   \draw [->] (BB') -- node [above] {$\scriptstyle \pi$} (BA');
   \draw [double] (BB') -- (BC');
   \draw [->] (BC') -- node [above] {$\scriptstyle i$} (BD');
   \draw [double] (AA') -- (BA');
   \draw [double] (AB') -- (BB');
   \draw [double] (AC') -- (BC');
   \draw [double] (AD') -- (BD');
   \draw [->] (AA) -- (AA');
   \draw [->] (AB) -- (AB');
   \draw [->] (AC) -- (AC');
   \draw [->] (AD) -- (AD');
   \draw [->] (BA) -- (BA');
   \draw [->] (BB) -- (BB');
   \draw [->] (BC) -- (BC');
   \draw [->] (BD) -- (BD');
  \end{tikzpicture}
 \end{equation}
 Continuing with the above definition, we simply call $\psi \co F \to F'$ a {\em 
 $\star$-isomorphism} if in addition $\eta = 
 \eta'$ so that $\pi^* \psi = \id$.  We use the symbol $\stareq$ for 
 this equivalence relation.  Clearly it is preserved under base change.  
\end{subsec}

\begin{prop}[{cf.~\cite[Proposition 4.2]{iph}}]
 \label{prop:LT}
 Let $k$, $G$, $R$ be as in \eqref{subsec:setup} and fix $G / k$.  
 Then the functor 
 \[
  R \mapsto \{\text{$\star$-isomorphism classes of deformations $(F, i, \eta)$ 
  of $G$ to $R$}\} 
 \]
 from the category of complete local rings with residue field 
 containing $k$ to the category of sets is co-represented by the ring $E_n \ce 
 \BW k \lb u_1, \ldots, u_{n - 1} \rb$.  Explicitly, there is a (by no means 
 unique)\,\footnote{See \eqref{subsec:unique} below.} deformation $(F_\univ, \id, \id)$ to $E_n$ satisfying the following 
 universal property.  Given any deformation $(F, i, \eta)$ of $G$ to $R$, there 
 is a unique local homomorphism 
 \[
  \A \co E_n \to R 
 \]
 such that it reduces to $i \co k = E_n / I \hookrightarrow R / \fm$, with $I$ 
 and $\fm$ the maximal ideals, and such that there is a unique 
 $\star$-isomorphism 
 \begin{equation}
  \label{wlog}
  (F, i, \eta) \to (\A^* F_\univ, i, \id) 
 \end{equation}
\end{prop}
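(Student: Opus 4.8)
This proposition restates the deformation theorem of Lubin and Tate \cite{LT} (cf.\ \cite[Proposition 4.2]{iph}) in the language of formal group \emph{laws}: the plan is to recall that theorem in its formal-group formulation, to manufacture $F_\univ$ by fixing a coordinate on the universal deformation formal group, and to translate the universal property back and forth between coordinates. The one step that is not bookkeeping is \emph{rigidity}: every $\star$-automorphism of a deformation $(F, i, \eta)$ of $G$ to $R$ is the identity. To see this, note that such a $\psi$ is a series with $\psi(t) \in t + \fm\,R\lb t \rb$ and $\psi(t_1 +_F t_2) = \psi(t_1) +_F \psi(t_2)$; if $\psi \ne t$, choose $j \ge 1$ smallest so that some coefficient of $\psi - t$ lies in $\fm^j \setminus \fm^{j+1}$, reduce the homomorphism identity modulo $\fm^{j+1}$, and observe --- using $\fm \cdot \fm^j \subseteq \fm^{j+1}$ --- that the leading correction $\delta$, multiplied by the coefficient series of the invariant differential of $\bar F \ce i^* G$, defines a homomorphism $g \co \bar F \to \widehat{\BG}_a$ with coefficients in the $R/\fm$-module $\fm^j / \fm^{j+1}$. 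Such a $g$ satisfies $g \circ [p]_{\bar F} = p\,g = 0$, which forces $g = 0$ because $[p]_{\bar F}$ has nonzero leading coefficient in degree $p^n$ ($\bar F$ having finite height $n$); so $\delta = 0$, a contradiction. Rigidity yields every uniqueness clause below, since two $\star$-isomorphisms between fixed deformations differ by a $\star$-automorphism of the source.

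For existence I would first invoke \cite{LT} in its usual form: the functor of $\star$-isomorphism classes of deformations of the formal \emph{group} $\CG$ is corepresented by $E_n$, carrying a universal deformation $\CF_\univ$ over $E_n$ together with an isomorphism $\bar\eta \co \pi^* \CF_\univ \xrightarrow{\sim} \CG$ over $k = E_n / I$. Transporting the tautological coordinate $x_G$ --- the one with $(\CG)_{x_G} = G$ --- along $\bar\eta$ to a coordinate on $\pi^* \CF_\univ$, then lifting it coefficient-wise along the surjection $E_n \twoheadrightarrow k$ (possible since its linear coefficient, a unit of $k$, lifts to a unit of the local ring $E_n$), produces a coordinate $x_\univ$ on $\CF_\univ$ for which $F_\univ \ce (\CF_\univ)_{x_\univ}$ reduces on the nose to $G$; thus $(F_\univ, \id, \id)$ is a deformation of $G$ to $E_n$, its dependence on the chosen lift being exactly the non-uniqueness recorded in \eqref{subsec:unique}.

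Finally, given any deformation $(F, i, \eta)$ of $G$ to $R$, I would equip $\CF = \Spf R \lb x_F \rb$ with the deformation structure on formal groups induced by $(i, \eta)$, apply \cite{LT} to obtain the unique local homomorphism $\A \co E_n \to R$ reducing to $i$ and the unique isomorphism $\Psi \co \CF \xrightarrow{\sim} \A^* \CF_\univ$ of deformations of $\CG$, and read $\Psi$ in the coordinates $x_F$ and $\A^* x_\univ$ to get an isomorphism of formal group laws $\psi \co F \to \A^* F_\univ$; a direct check using the compatibility of $\Psi$ with the deformation structures then shows $\pi^* \psi = \eta$, so that $\psi$ is the $\star$-isomorphism \eqref{wlog}. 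Uniqueness of $\psi$ is rigidity, and uniqueness of $\A$ is the corepresentability imported from \cite{LT}. I expect the main obstacle to be not any single deep point but the translation in the last two paragraphs between ``isomorphism of formal groups respecting deformation structures'' and ``$\star$-isomorphism of formal group laws'' --- that is, keeping careful track of how the coordinates $x_G$ on $\CG$, $x_\univ$ on $\CF_\univ$, $x_F$ on $\CF$, and their reductions are forced to match.
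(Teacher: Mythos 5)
Your argument is correct, but it is organized differently from the paper's. The paper stays entirely in the language of formal group laws: it treats \cite[Theorem 3.1]{LT} as the special case $\eta = \id$ and reduces the general case to it by lifting $\eta$ to an isomorphism $\f \co F \to \tF$ over $R$ itself (such a lift exists because the ring co-representing isomorphisms of formal group laws is free polynomial, and is unique by the uniqueness clause of that theorem); the desired $\star$-isomorphism is then the composite $\g \circ \f$, with $\g$ the Lubin--Tate $\star$-isomorphism for $(\tF, i, \id)$, and both uniqueness statements are imported wholesale from \cite{LT}. You instead pass through the coordinate-free formulation, perform your lifting over $E_n$ rather than over $R$ (choosing a coordinate $x_\univ$ on $\CF_\univ$ reducing to the transport of $x_{_G}$ along $\bar\eta$), and re-derive rigidity by the invariant-differential and $[p]$-series argument instead of quoting it. Both routes work. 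What the paper's route buys is brevity and cleaner bookkeeping: the only input beyond \cite{LT} is the single lifting lemma over $R$, and the translation between isomorphisms of formal groups respecting deformation structures and $\star$-isomorphisms of formal group laws --- which you correctly flag as the delicate point of your approach --- never has to be made explicit. What yours buys is self-containedness on the rigidity point and a transparent account of exactly where the non-uniqueness of $F_\univ$ resides (the choice of coordinate lift), which the paper only records afterwards in \eqref{subsec:unique}. One small redundancy: the rigidity statement you prove from scratch is already contained in the uniqueness assertion of \cite[Theorem 3.1]{LT}, which you must invoke anyway for corepresentability and for the uniqueness of $\A$.
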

\begin{proof}
 Let $\pi \co R \to R / \fm$ and $\rho \co E_n \to E_n / I$ be the natural 
 projections.  When $\eta$ is allowed to be the identity only, this is 
 \cite[Theorem 3.1]{LT} (cf.~\cite[Theorem 2.3.1]{Ando95}).  More generally, the 
 universal property claims that the following diagram commutes, where $\A$ and 
 $\psi$ are both unique.  
 \begin{equation}
  \label{LT}
  \begin{tikzpicture}[baseline={([yshift=-10pt]current bounding box.north)}, scale=.88]
   \node (AA) at (3, 6) {$F$};
   \node (AB) at (6, 6) {$\pi^* F$};
   \node (AC) at (9, 6) {$i^* G$};
   \node (AD) at (12, 6) {$G$};
   \node (BA) at (1.5, 4) {$\A^* F_\univ$};
   \node (BB) at (4.5, 4) {$\pi^* \A^* F_\univ$};
   \node (BC) at (7.5, 4) {$i^* G$};
   \node (BD) at (10.5, 4) {$G$};
   \node (CA) at (0, 2) {$F_\univ$};
   \node (CB) at (3, 2) {$\rho^* F_\univ$};
   \node (CC) at (6, 2) {$G$};
   \node (CD) at (9, 2) {$G$};
   \draw [->] (AB) -- (AA);
   \draw [->] (AB) -- node [above] {$\scriptstyle \eta$} (AC);
   \draw [->] (AC) -- (AD);
   \draw [->] (BB) -- (BA);
   \draw [->] (BC) -- (BD);
   \draw [->] (AA) -- node [left] {$\scriptstyle \psi$} (BA);
   \draw [->] (BA) -- (CA);
   \draw [->] (AB) -- node [left] {$\scriptstyle \pi^* \psi$} (BB);
   \draw [->] (BB) -- (CB);
   \draw [double] (AC) -- (BC);
   \draw [->] (BC) -- (CC);
   \draw [double] (AD) -- (BD);
   \node (AA') at (3, 3) {$\Spf R$};
   \node (AB') at (6, 3) {$\Spf R / \fm$};
   \node (AC') at (9, 3) {$\Spf R / \fm$};
   \node (AD') at (12, 3) {$\Spf k$};
   \node (BA') at (1.5, 1) {$\Spf R$};
   \node (BB') at (4.5, 1) {$\Spf R / \fm$};
   \node (BC') at (7.5, 1) {$\Spf R / \fm$};
   \node (BD') at (10.5, 1) {$\Spf k$};
   \node (CA') at (0, -1) {$\Spf E_n$};
   \node (CB') at (3, -1) {$\Spf E_n / I$};
   \node (CC') at (6, -1) {$\Spf E_n / I$};
   \node (CD') at (9, -1) {$\Spf k$};
   \draw [->] (AB') -- node [above] {$\scriptstyle \pi$} (AA');
   \draw [double] (AB') -- (AC');
   \draw [->] (AC') -- node [above] {$\scriptstyle i$} (AD');
   \draw [->] (BB') -- node [above] {$\scriptstyle \pi$} (BA');
   \draw [double] (BB') -- (BC');
   \draw [->] (BC') -- node [above] {$\scriptstyle i$} (BD');
   \draw [->] (CB') -- node [above] {$\scriptstyle \rho$} (CA');
   \draw [double] (CB') -- (CC');
   \draw [double] (CC') -- (CD');
   \draw [double] (AA') -- (BA');
   \draw [->] (BA') -- node [left] {$\scriptstyle \A$} (CA');
   \draw [double] (AB') -- (BB');
   \draw [->] (BB') -- node [left] {$\scriptstyle i$} (CB');
   \draw [double] (AC') -- (BC');
   \draw [->] (BC') -- node [left] {$\scriptstyle i$} (CC');
   \draw [double] (AD') -- (BD');
   \draw [double] (BD') -- (CD');
   \draw [->] (AA) -- (AA');
   \draw [->] (AB) -- (AB');
   \draw [->] (AC) -- (AC');
   \draw [->] (AD) -- (AD');
   \draw [->] (BA) -- (BA');
   \draw [->] (BB) -- (BB');
   \draw [->] (BC) -- (BC');
   \draw [->] (BD) -- (BD');
   \draw [->] (CA) -- (CA');
   \draw [->] (CB) -- (CB');
   \draw [->] (CC) -- (CC');
   \draw [->] (CD) -- (CD');
   \draw [double] (BB) -- (BC);
   \draw [->] (CB) -- (CA);
   \draw [double] (CB) -- (CC);
   \draw [double] (CC) -- (CD);
   \draw [double] (BD) -- (CD);
  \end{tikzpicture}
 \end{equation}

 To show this, we refine a half of the diagram as follows, omitting the other half.  
 \begin{equation}
  \label{refine}
  \hskip 1.3cm
  \begin{tikzpicture}[baseline={([yshift=-10pt]current bounding box.north)}, scale=.88]
   \node (AA) at (3, 6) {$F$};
   \node (AB) at (6, 6) {$\pi^* F$};
   \node (AC) at (9, 6) {$i^* G$};
   \node (AD) at (12, 6) {$G$};
   \node (DA) at (2.25, 5) {$\tF$};
   \node (DB) at (5.25, 5) {$i^* G$};
   \node (DB') at (5.25, 2) {$\Spf R / \fm$};
   \draw [->] (DB) -- (DB');
   \node (DC) at (8.25, 5) {$i^* G$};
   \node (DD) at (11.25, 5) {$G$};
   \node (BA) at (1.5, 4) {$\A^* F_\univ$};
   \node (BB) at (4.5, 4) {$\pi^* \A^* F_\univ$};
   \node (BC) at (7.5, 4) {$i^* G$};
   \node (BD) at (10.5, 4) {$G$};
   \draw [->] (AB) -- (AA);
   \draw [->] (AB) -- node [above] {$\scriptstyle \eta$} (AC);
   \draw [->] (AC) -- (AD);
   \draw [->] (DB) -- (DA);
   \draw [->] (DC) -- (DD);
   \draw [->] (BB) -- (BA);
   \draw [->] (BC) -- (BD);
   \draw [->] (AA) -- node [left] {$\scriptstyle \f$} (DA);
   \draw [->] (DA) -- node [left] {$\scriptstyle \g$} (BA);
   \draw [->] (AB) -- node [left] {$\scriptstyle \eta$} (DB);
   \draw [double] (DB) -- (BB);
   \draw [double] (AC) -- (DC);
   \draw [double] (DC) -- (BC);
   \draw [double] (AD) -- (DD);
   \draw [double] (DD) -- (BD);
   \node (AA') at (3, 3) {$\Spf R$};
   \node (AB') at (6, 3) {$\Spf R / \fm$};
   \node (AC') at (9, 3) {$\Spf R / \fm$};
   \node (AD') at (12, 3) {$\Spf k$};
   \node (DA') at (2.25, 2) {$\Spf R$};
   \node (DC') at (8.25, 2) {$\Spf R / \fm$};
   \node (DD') at (11.25, 2) {$\Spf k$};
   \node (BA') at (1.5, 1) {$\Spf R$};
   \node (BB') at (4.5, 1) {$\Spf R / \fm$};
   \node (BC') at (7.5, 1) {$\Spf R / \fm$};
   \node (BD') at (10.5, 1) {$\Spf k$};
   \draw [->] (AB') -- node [above] {$\scriptstyle \pi$} (AA');
   \draw [double] (AB') -- (AC');
   \draw [->] (AC') -- node [above] {$\scriptstyle i$} (AD');
   \draw [->] (DB') -- node [above] {$\scriptstyle \pi$} (DA');
   \draw [double] (DB') -- (DC');
   \draw [->] (DC') -- node [above] {$\scriptstyle i$} (DD');
   \draw [->] (BB') -- node [above] {$\scriptstyle \pi$} (BA');
   \draw [double] (BB') -- (BC');
   \draw [->] (BC') -- node [above] {$\scriptstyle i$} (BD');
   \draw [double] (AA') -- (DA');
   \draw [double] (DA') -- (BA');
   \draw [double] (AB') -- (DB');
   \draw [double] (DB') -- (BB');
   \draw [double] (AC') -- (DC');
   \draw [double] (DC') -- (BC');
   \draw [double] (AD') -- (DD');
   \draw [double] (DD') -- (BD');
   \draw [->] (AA) -- (AA');
   \draw [->] (AB) -- (AB');
   \draw [->] (AC) -- (AC');
   \draw [->] (AD) -- (AD');
   \draw [->] (DA) -- (DA');
   \draw [->] (DC) -- (DC');
   \draw [->] (DD) -- (DD');
   \draw [->] (BA) -- (BA');
   \draw [->] (BB) -- (BB');
   \draw [->] (BC) -- (BC');
   \draw [->] (BD) -- (BD');
   \draw [double] (DB) -- (DC);
   \draw [double] (BB) -- (BC);
  \end{tikzpicture}
 \end{equation}
 Here $\phi \co F \to \tF$ over $R$ is any isomorphism lifting 
 $\eta$,\footnote{Such lifts always exist because 
 the ring co-representing (strict) isomorphisms 
 between formal group laws over commutative rings is free polynomial.  
 They are in fact unique by the uniqueness in \cite[Theorem 3.1]{LT}.} 
 so that $\tF$ 
 has deformation structure $(i, \id)$.  By \cite[Theorem 3.1]{LT}, there is a 
 unique local homomorphism $\A \co E_n \to R$ such that it reduces to $i \co k 
 = E_n / I \hookrightarrow R / \fm$ and such that there is a unique 
 $\star$-isomorphism $\g \co \tF \to \A^* F_\univ$.  Thus \eqref{refine} 
 commutes and, consequently, so does \eqref{LT} if we take $\psi =  \g \circ \f$.  
 
 Now, to show the uniqueness, suppose that $\A'$ and $\psi'$ fit into \eqref{LT} in place of $\A$ and $\psi$.  
 Then $\A^* F_\univ$ and $\A'^* F_\univ$ are in the same 
 $\star$-isomorphism class via $\psi' \circ \psi^{-1}$, so by the uniqueness \lc~we 
 have $\A = \A'$.  Moreover, $\psi' \circ \f^{-1} = \g$ so that 
 $\psi' = \g \circ \f = \psi$.  
\end{proof}

\begin{subsec}{\tt{Non-uniqueness of $F_\univ$}}
 \label{subsec:unique}
 There can be 
 $F_\univ / E_n$ and $F_\univ' / E_n$, both satisfying the universal property.  
 Namely, there exists a unique $\A_\univ \co E_n \to E_n$ with a unique 
 $F_\univ' \stareq \A_\univ^* F_\univ$, and 
 there exists a unique $\A_\univ' \co E_n \to E_n$ with a unique 
 $F_\univ \stareq \A_\univ'^* F_\univ'$.  Moreover, we have 
 $\A_\univ' \circ \A_\univ = \A_\univ \circ \A_\univ' = \id$.  

 Suppose that {\em a priori} we know $F_\univ \stareq F_\univ'$.  
 Then $\A_\univ = \A_\univ' = \id$ and this $\star$-isomorphism is unique.  
 Thus the classifying maps $\A \co E_n \to R$ for $F / R$ are independent of 
 the choice between $F_\univ$ and $F_\univ'$.  
\end{subsec}

\section{Deformations of Frobenius}
\label{sec:defofrob}

The flexibility of having an isomorphism $\eta$ in a deformation 
of a formal group law 
buys us a notion of pushforward of deformation structures 
along {\em any} isogeny, compatible with Frobenius in a precise way.  

\begin{subsec}{\tt{Pushforward of deformation structures along an isogeny}}
 \label{subsec:pushforward}
 Let $(F, i, \eta)$ be a deformation of $G$ to $R$.  Let $\psi \co F \to F'$ be 
 an isogeny of formal group laws over $R$ of degree $p^r$.  Then $F'$ can be endowed with a 
 deformation structure $(i', \eta')$ such that the following diagram commutes, 
 where $\sigma$ is the absolute $p$-power Frobenius and $\Frob$ is the relative 
 one.  
 \begin{equation}
  \label{pushforward}
  \begin{tikzpicture}[baseline={([yshift=-10pt]current bounding box.north)}, scale=.88]
   \node (AA) at (3, 6) {$F$};
   \node (AB) at (6, 6) {$\pi^* F$};
   \node (AC) at (9, 6) {$i^*G$};
   \node (AD) at (12, 6) {$G$};
   \node (BA) at (1.5, 4) {$F'$};
   \node (BB) at (4.5, 4) {$\pi^* F'$};
   \node (BC) at (8.25, 5) {$i^* G^{(p^r)}$};
   \node (BD) at (11.25, 5) {$G^{(p^r)}$};
   \node (CC) at (7.5, 4) {$i'^* G$};
   \node (CD) at (10.5, 4) {$G$};
   \draw [->] (AB) -- (AA);
   \draw [->] (AB) -- node [above] {$\scriptstyle \eta$} (AC);
   \draw [->] (AC) -- (AD);
   \draw [->] (BB) -- (BA);
   \draw [->] (BB) -- node [above] {$\scriptstyle \hskip .5cm \eta'$} (CC);
   \draw [->] (BC) -- (BD);
   \draw [->] (AA) -- node [left] {$\scriptstyle \psi$} (BA);
   \draw [->] (AB) -- node [left] {$\scriptstyle \pi^* \psi$} (BB);
   \draw [->] (AC) -- node [left] {$\scriptstyle i^* \Frob^r$} (BC);
   \draw [->] (AD) -- node [left] {$\scriptstyle \Frob^r$} (BD);
   \node (AA') at (3, 3) {$\Spf R$};
   \node (AB') at (6, 3) {$\Spf R / \fm$};
   \node (AC') at (9, 3) {$\Spf R / \fm$};
   \node (AD') at (12, 3) {$\Spf k$};
   \node (BA') at (1.5, 1) {$\Spf R$};
   \node (BB') at (4.5, 1) {$\Spf R / \fm$};
   \node (BC') at (8.25, 2) {$\Spf R / \fm$};
   \node (BD') at (11.25, 2) {$\Spf k$};
   \node (CC') at (7.5, 1) {$\Spf R / \fm$};
   \node (CD') at (10.5, 1) {$\Spf k$};
   \draw [->] (AB') -- node [above] {$\scriptstyle \pi$} (AA');
   \draw [double] (AB') -- (AC');
   \draw [->] (AC') -- node [above] {$\scriptstyle i$} (AD');
   \draw [->] (BB') -- node [above] {$\scriptstyle \pi$} (BA');
   \draw [double] (BB') -- (CC');
   \draw [->] (BC') -- node [above] {$\scriptstyle i$} (BD');
   \draw [->] (CC') -- node [above] {$\scriptstyle i'$} (CD');
   \draw [double] (AA') -- (BA');
   \draw [double] (AB') -- (BB');
   \draw [double] (AC') -- (BC');
   \draw [double] (BC') -- (CC');
   \draw [double] (AD') -- (BD');
   \draw [->] (BD') -- node [right] {$\scriptstyle \sigma^r$} (CD');
   \draw [->] (AA) -- (AA');
   \draw [->] (AB) -- (AB');
   \draw [->] (AC) -- (AC');
   \draw [->] (AD) -- (AD');
   \draw [->] (BA) -- (BA');
   \draw [->] (BB) -- (BB');
   \draw [->] (BC) -- (BC');
   \draw [->] (BD) -- (BD');
   \draw [->] (CC) -- (CC');
   \draw [->] (CD) -- (CD');
   \draw [->] (CC) -- (CD);
   \draw [double] (BC) -- (CC);
   \draw [->] (BD) -- (CD);
  \end{tikzpicture}
 \end{equation}
 We write $\psi_! (i, \eta) \ce (i', \eta')$ and call it 
 the {\em pushforward of $(i, 
 \eta)$ along $\psi$}.  Explicitly, the pair is determined by the equalities 
 \[
  i' = i \circ \sigma^r \qquad \ad \qquad \eta' \circ \pi^* \psi = i^* \Frob^r 
  \circ \eta 
 \]
\end{subsec}

\begin{subsec}{\tt{Categories of deformations}}
 Fix $G / k$.  Let $\Def_\isog(R)$ be the category with objects deformations $(F, 
 i, \eta)$ of $G$ to $R$ and with morphisms $(F, i, \eta) \to (F', i', \eta')$, 
 each consisting of an isogeny $\psi \co F \to F'$ of formal group laws over $R$ and 
 an equality $(i', \eta') = \psi_!(i, \eta)$.  The degree of $\psi$ must be $p^r$ 
 for some $r \geq 0$.  Note that the 
 isomorphisms in $\Def_\isog(R)$ are precisely the $\star$-isomorphisms 
 (cf.~\eqref{stariso}, when $r = 0$) and 
 that the only automorphism of an object is the identity by the 
 uniqueness in Proposition \ref{prop:LT}.  
\end{subsec}

\begin{subsec}{\tt{Deformations of Frobenius}}
 \label{subsec:defofrob}
 Given the diagram \eqref{pushforward}, 
 we view a morphism $(F, i, \eta) \to (F', i', \eta')$ in  $\Def_\isog(R)$ as 
 a deformation to $R$ of $\Phi^r$ in the wide category $\FG_\isog$ 
 \eqref{ex:frob}.\footnote{More precisely, with a corresponding 
 wide category of formal group laws understood, it is a deformation of 
 the endomorphism on $G / k$ induced by $\Phi^r$.  We will also denote this 
 endomorphism of formal group laws by $\Phi^r$.}  Thus, we call it a {\em deformation of Frobenius}, and 
 simply call $\psi \co F \to F'$ such if $\eta = \eta'$ so that $\pi^* \psi$ is a relative Frobenius 
 (cf.~\cite[11.3]{cong}).  
 Two deformations $(F_1, i_1, \eta_1) \to (F_1', i_1', \eta_1')$ and 
 $(F_2, i_2, \eta_2) \to (F_2', i_2', \eta_2')$ of Frobenius are {\em isomorphic} 
 if $(F_1, i_1, \eta_1)$ and $(F_2, i_2, \eta_2)$ are $\star$-isomorphic and if 
 $(F_1', i_1', \eta_1')$ and $(F_2', i_2', \eta_2')$ are $\star$-isomorphic.  
\end{subsec}

\begin{prop}[{cf.~\cite[Theorem 4.4]{iph}}]
 \label{prop:S}
 Let $k$, $G$, $R$, $E_n$ be as in Proposition \ref{prop:LT} 
 and again fix $G / k$.  Then, for each $r \geq 0$, the functor 
 \[
  R \mapsto \{\text{isomorphism classes of deformations $(F, i, \eta) \to (F', 
  i', \eta')$ of $\Phi^r$ to $R$}\} 
 \]
 from the category of complete local rings with residue field 
 containing $k$ to the category of sets is co-represented by a ring $A_r$, which 
 is a bimodule over $A_0 = E_n$ with structure maps local homomorphisms $s_r, t_r \co A_0 \to A_r$.  
 Explicitly, there is a (by no means unique) deformation $(F_\univ, \id, \id)$ 
 of $G$ to $A_0$ satisfying the following universal property.  Given any 
 deformation $(F, i, \eta) \to (F', i', \eta')$ of $\Phi^r$ to $R$, there is a 
 unique local homomorphism 
 \[
  \A_r \co A_r \to R 
 \]
 such that $\A_r s_r, \A_r t_r \co A_0 \to R$ reduce to $i, i' \co k = A_0 / I 
 \hookrightarrow R / \fm$ respectively, with $I$ and $\fm$ the maximal ideals, 
 and such that there are unique $\star$-isomorphisms 
 \[
  (F, i, \eta) \to (\A_r^* s_r^* F_\univ, i, \id) \qquad \ad \qquad (F', i', 
  \eta') \to (\A_r^* t_r^* F_\univ, i', \id) 
 \]
\end{prop}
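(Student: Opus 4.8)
The plan is to construct $A_r$ as the ring co-representing degree-$p^r$ subgroups of the universal deformation $\mathcal F_\univ$ over $A_0 = E_n$, and then identify its structure as a bimodule over $E_n$ via the two natural maps ``source'' and ``target.'' By Strickland's representability theorem \cite[Theorem 19]{Str97} (cf.\ also \cite[Theorem 1.1]{Str98} in the topological incarnation), there is a complete local $E_n$-algebra $A_r$, finite free over $E_n$, co-representing the functor sending a complete local $E_n$-algebra $B$ to the set of degree-$p^r$ subgroups of the base change of $\mathcal F_\univ$ to $B$. Take $s_r \co A_0 \to A_r$ to be the structure map arising from this construction, classifying the source formal group; and take $t_r \co A_0 \to A_r$ to be the classifying map for the quotient deformation $\mathcal F_\univ / \mathcal H_\univ$ together with its pushed-forward deformation structure, where $\mathcal H_\univ \subset \mathcal F_\univ \times_{A_0} A_r$ is the universal subgroup. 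That $t_r$ is a local homomorphism follows because the quotient isogeny reduces mod the maximal ideal to a power of Frobenius by \eqref{defofrob}, so the induced map on residue fields is the $p^r$-power Frobenius $\sigma^r$, hence local.

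First I would check that a deformation of $\Phi^r$ to $R$ in the sense of \S\ref{subsec:defofrob} is the same data as a degree-$p^r$ subgroup of a deformation of $G$ to $R$, up to the appropriate notion of isomorphism. Given a morphism $(F,i,\eta)\to(F',i',\eta')$ in $\Def_\isog(R)$, its underlying isogeny $\psi\co F\to F'$ of formal group laws has a kernel $\mathcal K\subset\mathcal F$, a degree-$p^r$ subgroup; conversely, from a degree-$p^r$ subgroup $\mathcal H\subset\mathcal F$ we form $f_{_H}\co F\to F/H$ using the norm construction of \eqref{subsec:quotient}, and endow $F/H$ with the pushforward deformation structure $f_{_H,!}(i,\eta)$ via \eqref{pushforward}. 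These operations are mutually inverse because an isogeny of formal groups over a complete local ring is determined by its kernel (its ring of functions is recovered as the equalizer displayed in \eqref{subsec:quotient}). The notion of isomorphism of deformations of Frobenius in \S\ref{subsec:defofrob} matches the identification of subgroups under $\star$-isomorphism of the ambient deformation, since a $\star$-isomorphism $\mathcal F\xrightarrow{\sim}\mathcal F'$ carries a subgroup to a subgroup and the uniqueness clause in Proposition \ref{prop:LT} forces compatibility with the quotient side.

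Next I would run the universal property. Given a deformation of $\Phi^r$ to $R$, apply Proposition \ref{prop:LT} to $(F,i,\eta)$ to get a unique $\A\co E_n\to R$ and unique $\star$-isomorphism $(F,i,\eta)\to(\A^*F_\univ,i,\id)$; this transports the kernel subgroup $\mathcal K\subset\mathcal F$ to a degree-$p^r$ subgroup of $\A^*\mathcal F_\univ = $ the base change of $\mathcal F_\univ$ along $\A$, hence to a unique algebra map $\A_r\co A_r\to R$ with $\A_r\circ s_r = \A$ by the co-representability of $A_r$. By construction $\A_r\circ t_r$ classifies the quotient $\A^*\mathcal F_\univ/(\text{transported }\mathcal K)$, which is $\star$-isomorphic to $F'$ by the previous paragraph's dictionary applied in reverse; its reduction mod $\fm$ is $i' = i\circ\sigma^r$ by \eqref{pushforward} and \eqref{defofrob}, so $\A_r t_r$ reduces to $i'$. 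The two required $\star$-isomorphisms $(F,i,\eta)\to(\A_r^*s_r^*F_\univ,i,\id)$ and $(F',i',\eta')\to(\A_r^*t_r^*F_\univ,i',\id)$ and their uniqueness then follow from those in Proposition \ref{prop:LT} applied separately to the source and target deformations.

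The main obstacle is the identification of $t_r$ as a \emph{local} homomorphism and, more substantively, the bookkeeping that the quotient formal group law $F/H$ — built via the norm \eqref{lubinisog} relative to a \emph{chosen} coordinate — carries a well-defined deformation structure independent of the coordinate, so that $t_r$ is genuinely a ring map and not just a set-theoretic assignment of $\star$-isomorphism classes. I would handle this by noting that any two coordinates on $\mathcal F_\univ\times_{A_0}A_r$ differ by a $\star$-isomorphism, under which the norm construction \eqref{lubinisog} is equivariant (this is exactly the compatibility asserted in \S\ref{subsec:quotient}), so the resulting quotient formal group laws are canonically $\star$-isomorphic and the classifying map $t_r$ to $E_n$ is well-defined by the uniqueness in Proposition \ref{prop:LT}. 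Everything else is an exercise in chasing the large diagram \eqref{pushforward} together with the universal property already in hand.
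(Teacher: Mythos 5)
Your proposal is correct and follows essentially the same route as the paper: define $A_r$ as the ring of functions on Strickland's subgroup scheme ${\rm Sub}_r(\CF_\univ)$ over $A_0 = E_n$, take $s_r$ to be the structure map and $t_r$ to classify the quotient by the universal subgroup with its pushed-forward deformation structure, and then verify the universal property by transporting the kernel of a given deformation of $\Phi^r$ to a subgroup of $\A^*\CF_\univ$ and invoking the co-representability of $A_r$ together with Proposition \ref{prop:LT}. The only quibble is a citation slip: the representability of the degree-$p^r$ subgroup functor is \cite[Theorem 42]{Str97}, not Theorem 19 (which concerns the quotient/norm construction and is what the paper uses for the identification $\A_r^*(F_\univ \times_{A_0} A_r)/\A_r^* H_\univ^{(p^r)} = \A_r^* F_\univ^{(p^r)}$).
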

\begin{proof}
 Let $(F_\univ, \id, \id)$ be given from Proposition \ref{prop:LT} and 
 write $\CF_\univ$ for the formal group over $E_n$ whose group law is $F_\univ$ 
 as in \eqref{subsec:fgl}.  Clearly 
 $A_0 = E_n$ with $s_0 = t_0 = \id$.  In general, for each $r \geq 0$, 
 let 
 ${\rm Sub}_r(\CF_\univ)$ be the affine formal scheme over $A_0$ 
 which classifies degree-$p^r$ subgroups of 
 $\CF_\univ$ \cite[Theorem 42]{Str97} 
 and let $A_r$ be its ring of functions.  We need only determine the maps $s_r$, 
 $t_r$ and show that $(F_\univ, \id, \id)$ satisfies the stronger universal 
 property involving $A_r$ as stated.  

 The structure morphism $A_0 \to A_r$ of ${\rm Sub}_r(\CF_\univ) / A_0$ 
 reduces to the identity between residue fields (see [\ib, \S 13]).  
 Thus $F_\univ \times_{A_0} A_r$ inherits the deformation structure 
 $(\id, \id)$ from $F_\univ$ along the base change.  
 Let $\CH^{(p^r)}_\univ \subset \CF_\univ \times_{A_0} A_r$ be the subgroup of 
 degree $p^r$ classified by $\id \co A_r \to A_r$, and let $\CF^{(p^r)}_\univ \ce 
 (\CF_\univ \times_{A_0} A_r) / \CH_\univ$ be the quotient group as in 
 \eqref{subsec:quotient} with a particular group law $F^{(p^r)}_\univ$.  
 The quotient map of formal groups induces an isogeny 
 \[
  \psi^{(p^r)}_\univ \co F_\univ \underset{A_0}{\times} A_r \to F^{(p^r)}_\univ 
 \]
 of group laws over $A_r$.  
 By \eqref{defofrob} it is a deformation 
 of Frobenius \eqref{subsec:defofrob} and we have 
 \[
  {\psi^{(p^r)}_\univ}_!(\id, \id) = (\sigma^r, \id) 
 \]
 In view of Proposition \ref{prop:LT}, let $s_r, t_r \co A_0 \to A_r$ be 
 the unique local homomorphisms which classify $(F_\univ \times_{A_0} A_r, \id, 
 \id)$ and $(F^{(p^r)}_\univ, \sigma^r, \id)$ respectively.  Indeed, by uniqueness, $s_r$ is 
 the structure morphism of ${\rm Sub}_r(\CF_\univ) / A_0$.  

 It remains to verify the universal property.  By Proposition \ref{prop:LT}, 
 given any deformation $\psi \times \psi_! \co (F, i, \eta) \to (F', i', \eta')$ 
 of $\Phi^r$ to $R$, there are unique local homomorphisms 
 \[
  \A, \A' \co A_0 \to R 
 \]
 such that they reduce to $i, i' \co k = A_0 / I \hookrightarrow R / \fm$ 
 respectively, and such that there are unique $\star$-isomorphisms 
 \[
  (F, i, \eta) \to (\A^* F_\univ, i, \id) \qquad \ad \qquad (F', i', \eta') \to 
  (\A'^* F_\univ, i', \id) 
 \]
 Let $\CH \subset \A^* \CF_\univ$ be the image of $\ker\,\psi \subset \CF$ 
 under the first $\star$-isomorphism.\footnote{In fact, since a 
 $\star$-isomorphism of formal group laws can be thought of as a change of 
 coordinates on a formal group, we may write $\CH = \ker\,\psi$.}  It is a subgroup of degree $p^r$.  Then by 
 [\ib, Theorem 42] (taking $X = \Spf A_0$ and $Y = \Spf R$) there is a unique local homomorphism 
 \[
  \A_r \co A_r \to R 
 \]
 which classifies $\CH$ with $\A_r \circ s_r = \A$.  Clearly $\A_r \circ s_r$ reduces to $i$ and there is a 
 unique $\star$-isomorphism $(F, i, \eta) \to (\A_r^* s_r^* F_\univ, i, \id)$ as 
 above.  On the other hand, we have 
 \begin{align*}
  (F', i', \eta') & \stareq (F / \ker\,\psi, i', \eta) & 
                    \text{cf.~\eqref{pushforward}} \\
                  & \stareq (\A^* F_\univ / H, i', \id) \\
                  & = \big( \A_r^* (F_\univ \times_{A_0} A_r) / \A_r^* 
                    H^{(p^r)}_\univ, i', \id \big) \\
                  & = (\A_r^* F^{(p^r)}_\univ, i', \id) & \text{by [\ib, Theorem 
                    19\,(v)]} \\
                  & \stareq (\A_r^* t_r^* F_\univ, i', \id) 
 \end{align*}
 Therefore, $\A_r \circ t_r = \A'$, so $\A_r \circ t_r$ reduces to $i'$ and 
 there is a unique $\star$-isomorphism $(F', i', \eta') \to (\A_r^* t_r^* 
 F_\univ, i', \id)$.  
\end{proof}

\begin{subsec}{\tt{Canonical lifts of Frobenius morphisms}}
 \label{subsec:canlift}
 To summarize Proposition \ref{prop:S} and its proof, 
 the ring $A_r$ carries a universal example $\psi^{(p^r)}_\univ$ of deformation of 
 $\Phi^r$ to $R$ as follows.\footnote{See [\ib, \S 10, \S 13] for more about the rings $A_r$.  
 For an explicit example, see \cite[Theorem 1.2]{me} where $r = 1$ and $G$ is of 
 height $2$ over $k = \cF_p$.}  
 \begin{equation}
  \label{univex}
  \begin{tikzpicture}[baseline={([yshift=-10pt]current bounding box.north)}, scale=.88]
   \node (AA) at (3.9, 6) {$s_r^* F_\univ =$};
   \node (AB) at (6, 5.96) {$F_\univ \times_{A_0} A_r$};
   \node (AC) at (9, 5.96) {\raisebox{.25cm}{$F^{(p^r)}_\univ$}};
   \node (AD) at (10.4, 6.05) {$\stareq t_r^* F_\univ$};
   \draw [->] (AB) -- node [above] {$\scriptstyle \psi^{(p^r)}_\univ$} (AC);
   \node (A') at (7.5, 3.7) {$\Spf A_r$};
   \draw [->] (AB) -- (A');
   \draw [->] (AC) -- (A');
  \end{tikzpicture}
 \end{equation}
 The central notion of norm coherence in this paper, introduced in the next section, concerns the question of when the $\star$-isomorphism 
 in the above diagram is the identity.  We write $\psi^{(p^r)\,\star}_\univ$ 
 for the composite of $\psi^{(p^r)}_\univ$ with this $\star$-isomorphism.  
\end{subsec}

\begin{rmk}
 \label{rmk:unique}
 Continuing with \eqref{subsec:unique}, 
 we see from the proof of Proposition \ref{prop:S} that the maps $\A_r$, $s_r$, 
 $t_r$ are independent of 
 the choice between {\em $\star$-isomorphic} universal deformations.  
\end{rmk}

\begin{subsec}{\tt{Dependency of $F_\univ$ on $G / k$}}
 \label{subsec:univ}
 The choice of $F_\univ = F_\univ(G)$ as in (\ref{subsec:unique}, 
 \ref{rmk:unique}) 
 can be made functorial with respect to morphisms in $\FG_\isog$.  Specifically, 
 for functoriality under base change, the right square in 
 \eqref{widemor} deforms so that
 \[
  F_\univ(G' \times_{k'} k) = F_\univ(G') \underset{E_n(G')}{\times} 
  E_n(G' \times_{k'} k) 
 \]
 as formal group laws, 
 where $n$ is the height of $G' / k'$ (invariant under base change) and 
 $E_n(G') \to E_n(G' \times_{k'} k)$ sends each generator of 
 the source to the corresponding one of the target.  
 This identity follows from the construction of $F_\univ$ in \cite[Proposition 1.1]{LT} 
 as a ``generic group law'' $\G$.  
 
 Moving to the left square of \eqref{widemor}, 
 let $\psi \co \CG \to \CG' \times_{k'} k$ be any isogeny 
 of degree $p^r$ and consider the following (cf.~\eqref{factor} and \eqref{pushforward}).  
 \[
  \begin{tikzpicture}[scale=.88]
   \node (LT) at (0, 3) {$\CG$};
   \node (MLT) at (4, 3) {$\CG' \times_{k'} k$};
   \node (MRT) at (8, 3) {$\CG^{(p^r)}$};
   \node (RT) at (12, 3) {$\CG$};
   \node (LB) at (0, 0) {$\Spf k$};
   \node (MLB) at (4, 0) {$\Spf k$};
   \node (MRB) at (8, 0) {$\Spf k$};
   \node (RB) at (12, 0) {$\Spf k$};
   \draw [->] (LT) -- node [above] {$\scriptstyle \psi$} (MLT);
   \draw [->] (MLT) -- node [above] {$\scriptstyle \sim$} (MRT);
   \draw [->] (MRT) -- (RT);
   \draw [->] (LT) -- (LB);
   \draw [->] (MLT) -- (MLB);
   \draw [->] (MRT) -- (MRB);
   \draw [->] (RT) -- (RB);
   \draw [double] (LB) -- (MLB);
   \draw [double] (MLB) -- (MRB);
   \draw [->] (MRB) -- node [above] {$\scriptstyle \sigma^r$} (RB);
   \draw [->] (LT) to [out = 30, in = 150] node [above] {$\scriptstyle \Frob^r$} (MRT);
   \node at (8.5, 2.5) {$\lrcorner$}; 
  \end{tikzpicture}
 \]
 Note that $\Frob^r$ deforms to 
 $\psi_\univ^{(p^r)\,\star} \co s_r^* F_\univ(G) \to t_r^* F_\univ(G)$ over $A_r$ as in \eqref{univex}.  
 Moreover, 
 \begin{align*}
  t_r^* F_\univ(G) & = F_\univ(G) \underset{A_0}{\times}^{t_r} A_r \\
                   & = F_\univ(G) \underset{E_n(G)}{\times} E_n \big( G^{(p^r)} 
                     \big) \underset{E_n \left( G^{(p^r)} \right)}{\times} A_r 
                     \\
                   & = F_\univ \big( G^{(p^r)} \big) \underset{E_n \left( 
                     G^{(p^r)} \right)}{\times} A_r 
 \end{align*}
 where $E_n \big( G^{(p^r)} \big) = \BW k \lb v_1, \ldots, v_{n - 1} \rb$ 
 with each $v_i \mapsto t_r(u_i) \in A_r$.\footnote{For an explicit 
 example, see \cite[Theorem 1.6\,(ii)]{me} where $r = 1$ and $G$ is of 
 height $n = 2$ over $k = \cF_p$, with $u_1 = h$, $t_1(u_1) = \psi^{\,p}(h)$.}  
 The construction of $F_\univ$ clearly respects isomorphisms so that 
 \[
  F_\univ(G' \times_{k'} k) 
  \underset{E_n(G' \times_{k'} k)}{\times} 
  E_n \big( G^{(p^r)} \big) 
  \cong F_\univ \big( G^{(p^r)} \big) 
 \]
 Thus, over $A_r$ (omitting the base changes), $\psi$ deforms to 
 \[
  F_\univ(G) \xrightarrow{\psi_\univ^{(p^r)\star}} F_\univ \big( G^{(p^r)} \big) 
  \xrightarrow{\sim} F_\univ(G' \times_{k'} k) 
 \]
 This shows the functoriality of $F_\univ$ under isogenies.  

 To summarize, given a morphism in $\FG_\isog$ as above, 
 the universal deformations of its source and target can be chosen 
 so that in terms of formal group laws \eqref{widemor} deforms over $A_r$ 
 as follows.  
 \[
  \begin{tikzpicture}[scale=.88]
   \node (AA) at (0, 2) {$s_r^* F_\univ(G)$};
   \node (AB) at (4, 2) {$t_r^* F_\univ(G)$};
   \node (BB) at (6, 4) {$F_\univ \big( G^{(p^r)} \big)$};
   \node (CA) at (4, 6) {$F_\univ(G)$};
   \node (CB) at (8, 6) {$F_\univ(G' \times_{k'} k)$};
   \node (CC) at (12, 6) {$F_\univ(G')$};
   \draw [->] (AA) -- (AB);
   \draw [->] (AA) -- (CA);
   \draw [->] (AB) -- (BB);
   \draw [->] (BB) -- (CB);
   \node (AA') at (0, -1) {$\Spf A_r$};
   \node (AB') at (4, -1) {$\Spf A_r$};
   \node (BB') at (6, 1) {$\Spf E_n \big( G^{(p^r)} \big)$};
   \node (CA') at (4, 3) {$\Spf E_n(G)$};
   \node (CB') at (8, 3) {$\Spf E_n(G' \times_{k'} k)$};
   \node (CC') at (12, 3) {$\Spf E_n(G')$};
   \draw [double] (AA') -- (AB');
   \draw [->] (CB') -- (CC');
   \draw [->] (AA') -- (CA');
   \draw [->] (AB') -- (BB');
   \draw [->] (BB') -- (CB');
   \draw [->] (AA) -- (AA');
   \draw [->] (AB) -- (AB');
   \draw [->] (BB) -- (BB');
   \draw [->] (CA) -- (CA');
   \draw [->] (CB) -- (CB');
   \draw [->] (CC) -- (CC');
   \draw [->] (CB) -- (CC);
   \node at (8.5, 5.5) {$\lrcorner$}; 
   \node at (0.5, 1.5) {$\mathbin{\rotatebox[origin=c]{-20}{$\scriptstyle \rangle$}}$}; 
   \node at (4.5, 1.5) {$\mathbin{\rotatebox[origin=c]{-20}{$\scriptstyle \rangle$}}$}; 
  \end{tikzpicture}
 \]
\end{subsec}

\section{Norm-coherent deformations}
\label{sec:nc}

\begin{subsec}{\tt{Set-up}}
 \label{subsec:setup'}
 Let $k$ be an algebraic extension of $\BF_p$ (in particular, $k$ is perfect) 
 and $G$ be a formal group law over 
 $k$ of finite height $n$.  Let $R$ be a complete local ring with 
 maximal ideal $\fm$ and residue field $R / \fm \supset k$.  Let $F / R$ be a 
 deformation of $G / k$ with deformation structure $(i, \id)$ as in \eqref{subsec:ds}.  
 \begin{rmk}
  \label{rmk:normalization}
  Observe that, given any deformation $(F, i, \eta)$, 
  there exists a unique deformation $(\tF, i, \id)$ 
  such that the two are in the same $\star$-isomorphism class 
  (cf.~\eqref{refine}).  Without loss of generality, here we 
  focus on the case of $\eta = \id$.  
 \end{rmk}
\end{subsec}

\begin{subsec}{\tt{Quotient by the $p$-torsion subgroup}}
 \label{subsec:f_p}
 As in \eqref{subsec:fgl} write $\CF$ for the formal group over $R$ whose group law is $F$ 
 (upon choosing a coordinate) and write 
 $\CF[p]$ for its subgroup scheme of $p$-torsions.  This is defined over an extension 
 $\tR$ of $R$ obtained by adjoining the roots of the $p$-series of $F$.  
 Let $\CF / \CF[p] \ce (\CF \times_R \tR) / \CF[p]$ be the quotient 
 group as in \eqref{subsec:quotient} with a particular group law $F / 
 F[p]$ so that the isogeny 
 \[
  f_p \co F \to F / F[p] 
 \]
 induced by the quotient morphism of formal groups 
 is a deformation of Frobenius \eqref{subsec:defofrob}.  
 Note that $\CF[p](\tR)$ is 
 stable under the action of $\Aut(\tR / R)$.  Thus $\,f_p$ can be defined 
 over $R$ (cf.~\cite[Theorem 1.4]{Lubin67}).  

 \begin{rmk}
  \label{rmk:endo}
  The restriction of $\,f_p$ on the special fiber is the relative $p^n$-power 
  Frobenius.  It is not an endomorphism unless $k \subset \BF_{p^n}$ 
  (cf.~\cite[proof of Proposition 2.5.1]{Ando95}).  
 \end{rmk}
\end{subsec}

\begin{subsec}{\tt{The isogeny $l_p$}}
 By Proposition \ref{prop:S} there is a unique local homomorphism 
 $\A_n \co A_n \to R$ 
 together with a unique $\star$-isomorphism 
 $(F/F[p], i \circ \sigma^n, \id) \to (\A_n^* t_n^* F_\univ, i \circ \sigma^n, \id)$.  
 Write 
 \[
  g_p \co F / F[p] \to \A_n^* t_n^* F_\univ 
 \]
 for the corresponding 
 $\star$-isomorphism of formal group laws.  Let 
 \[
  l_p \co F \to \A_n^* t_n^* F_\univ 
 \]
 be the composite $g_p \circ f_p$.  
 
 \begin{rmk}
  \label{rmk:l_p}
  The isogeny $l_p$ of formal group laws over $R$ is uniquely characterized by the 
  following properties (cf.~[\ib, Proposition 2.5.4], the proof here being completely analogous).  
  \begin{enumerate}[(i)]
   \item It has source $F$ and target 
   of the form $\A^* t_n^* F_\univ$ for some local homomorphism $\A \co A_n \to 
   R$.  

   \item The kernel of $l_p$ applied to $\CF$ is $\CF[p]$.  

   \item Over the residue field, $l_p$ reduces to the relative $p^n$-power 
   Frobenius.  
  \end{enumerate}
  Explicitly, with notation as in \eqref{pushforward}, $f_p$ and $l_p$ 
  fit into the following commutative diagram.  Their restrictions on
  the special fiber are 
  highlighted with corresponding decorations, which are in fact identical.  
  \begin{equation}
   \label{l_p}
   \begin{tikzpicture}[baseline={([yshift=-10pt]current bounding box.north)}, scale=.88]
    \node (AA) at (3, 4) {$F$};
    \node (AB) at (6, 4) {$\pi^* F$};
    \node (AC) at (9, 4) {$i^* G$};
    \node (AD) at (12, 4) {$G$};
    \node (BA) at (1.5, 2) {$F / F[p]$};
    \node (BB) at (4.5, 2) {$\pi^* (F / F[p])$};
    \node (BC) at (7.5, 2) {$i^* G^{(p^n)}$};
    \node (BD) at (10.5, 2) {$G^{(p^n)}$};
    \node (CA) at (0, 0) {$\A_n^* t_n^* F_\univ$};
    \node (CB) at (3, 0) {$\pi^* \A_n^* t_n^* F_\univ$};
    \node (CC) at (6, 0) {$i'^* G$};
    \node (CD) at (9, 0) {$G$};
    \draw [->] (AB) -- (AA);
    \draw [double] (AB) -- (AC);
    \draw [->] (AC) -- (AD);
    \draw [->] (BB) -- (BA);
    \draw [double] (BB) -- (BC);
    \draw [->] (BC) -- (BD);
    \draw [->] (CB) -- (CA);
    \draw [double] (CB) -- (CC);
    \draw [->] (CC) -- (CD);
    \draw [->, line join = round, decorate, decoration = {zigzag, segment length 
           = 4, amplitude = .9, post = lineto, post length = 2pt}] (AA) -- node 
          [left] {$\scriptstyle f_p$} (BA);
    \draw [->, line join = round, decorate, decoration = {zigzag, segment length 
           = 4, amplitude = .9, post = lineto, post length = 2pt}] (AB) -- node 
          [left] {$\scriptstyle \pi^* f_p$} (BB);
    \draw [->, line join = round, decorate, decoration = {zigzag, segment length 
           = 8, amplitude = .9, post = lineto, post length = 2pt}] (AC) -- node 
          [left] {$\scriptstyle i^* \Frob^n$} (BC);
    \draw [->] (AD) -- node [left] {$\scriptstyle \Frob^n$} (BD);
    \draw [->] (BA) -- node [left] {$\scriptstyle g_p$} (CA);
    \draw [double] (BB) -- node [left] {$\scriptstyle \pi^* g_p$} (CB);
    \draw [double] (BC) -- (CC);
    \draw [->] (BD) -- (CD);
    \draw [->, line join = round, decorate, decoration = {zigzag, segment length 
           = 8, amplitude = .9, post = lineto, post length = 2pt}] (AA) to [out = 
          -170, in = 80] node [left] {$\scriptstyle l_p$} (CA);
   \end{tikzpicture}
  \end{equation}
 \end{rmk}
 
 \begin{ex}
  \label{ex:Honda}
  Let $k = \BF_p$ and $G$ be the Honda formal group law given by [\ib, 2.5.5] 
  so that $[p]_{_G}(t) = t^{\,p^n}$.  Then the relative Frobenius 
  $\Frob^n$ coincides with the absolute Frobenius automorphism on $G$ and so $l_p 
  = [p]_{_F}$ (cf.~[\ib, Proposition 2.6.1]).  
 \end{ex}
\end{subsec}

\begin{subsec}{\tt{The isogenies $l_{_H}$}}
 \label{subsec:l_H}
 More generally, let $\CH \subset \CF$ be a subgroup of degree $p^r$, $\psi_{_H} \co 
 F \to F'$ be any isogeny with kernel $\CH$, and 
 $\psi_{_H} \times {\psi_{_H}}_! \co (F, i, \id) \to (F', i', \eta')$ 
 be the corresponding deformation of Frobenius.  
 The diagram \eqref{l_p} generalizes as follows.  
 \begin{equation}
  \label{l_H}
  \begin{tikzpicture}[baseline={([yshift=-10pt]current bounding box.north)}, scale=.88]
   \node (AA) at (3, 4) {$F$};
   \node (AB) at (6, 4) {$\pi^* F$};
   \node (AC) at (9, 4) {$i^* G$};
   \node (AD) at (12, 4) {$G$};
   \node (BA) at (1.5, 2) {$F'$};
   \node (BB) at (4.5, 2) {$\pi^* F'$};
   \node (BC) at (7.5, 2) {$i^* G^{(p^r)}$};
   \node (BD) at (10.5, 2) {$G^{(p^r)}$};
   \node (CA) at (0, 0) {$\A_r^* t_r^* F_\univ$};
   \node (CB) at (3, 0) {$\pi^* \A_r^* t_r^* F_\univ$};
   \node (CC) at (6, 0) {$i'^* G$};
   \node (CD) at (9, 0) {$G$};
   \draw [->] (AB) -- (AA);
   \draw [double] (AB) -- node [above] {$\scriptstyle \eta\,=\,\id$} (AC);
   \draw [->] (AC) -- (AD);
   \draw [->] (BB) -- (BA);
   \draw [->] (BB) -- node [above] {$\scriptstyle \eta'$} (BC);
   \draw [->] (BC) -- (BD);
   \draw [->] (CB) -- (CA);
   \draw [double] (CB) -- (CC);
   \draw [->] (CC) -- (CD);
   \draw [->, line join = round, decorate, decoration = {zigzag, segment length 
          = 4, amplitude = .9, post = lineto, post length = 2pt}] (AA) -- node 
         [left] {$\scriptstyle \psi_{_H}$} (BA);
   \draw [->, line join = round, decorate, decoration = {zigzag, segment length 
          = 4, amplitude = .9, post = lineto, post length = 2pt}] (AB) -- node 
         [left] {$\scriptstyle \pi^* \psi_{_H}$} (BB);
   \draw [->, line join = round, decorate, decoration = {zigzag, segment length 
          = 8, amplitude = .9, post = lineto, post length = 2pt}] (AC) -- node 
         [left] {$\scriptstyle i^* \Frob^r$} (BC);
   \draw [->] (AD) -- node [left] {$\scriptstyle \Frob^r$} (BD);
   \draw [->] (BA) -- (CA);
   \draw [->] (BB) -- (CB);
   \draw [double] (BC) -- (CC);
   \draw [->] (BD) -- (CD);
   \draw [->, line join = round, decorate, decoration = {zigzag, segment length 
          = 8, amplitude = .9, post = lineto, post length = 2pt}] (AA) to [out = 
         -170, in = 80] node [left] {$\scriptstyle l_{_H}$} (CA);
  \end{tikzpicture}
 \end{equation}
 In particular, when $\psi_{_H} = f_{_H} \co F \to F / H$ is the 
 deformation of Frobenius with kernel $\CH$, 
 we have the following commutative diagram.  
 \begin{equation}
  \label{fgl}
  \begin{tikzpicture}[baseline={([yshift=-10pt]current bounding box.north)}, scale=.88]
   \node (LT) at (0, 3) {$F$};
   \node (RT) at (4, 3) {$F / H$};
   \node (RB) at (4, 0) {$\A_r^* t_r^* F_\univ$};
   \draw [->] (LT) -- node [above] {$\scriptstyle f_{_H}$} (RT);
   \draw [->] (LT) -- node [below] {$\scriptstyle l_{_H}$} (RB);
   \draw [->] (RT) -- node [right] {$\scriptstyle g_{_H}$} (RB);
  \end{tikzpicture}
 \end{equation}

 \begin{rmk}
  This construction of $l_{_H}$ is functorial under base change and 
  under quotient, due to the functoriality of $f_{_H}$ and $g_{_H}$ 
  (see \cite[Theorem 19\,(v)]{Str97}, 
  \cite[Proposition 2.2.6]{Ando95}, 
  and Proposition \ref{prop:S}).  
  To be precise, given any local homomorphism 
  $\B \co R \to R'$ and any finite subgroups 
  $\CH_1 \subset \CH_2$ of $\CF$, we have 
  \[
   l_{_{\B^* H}} = \B^* l_{_H} \qquad \ad \qquad 
   l_{_{H_2 / H_1}} \circ l_{_{H_1}} = l_{_{H_2}} 
  \]
  where the composition is taken up to a $\star$-isomorphism, 
  as shown in the following commutative diagrams.  
  \begin{equation}
   \label{bc}
   \begin{tikzpicture}[baseline={([yshift=-10pt]current bounding box.north)}, scale=.88]
    \node (LT) at (-2.1, 4) {$F$};
    \node (MT) at (2.5, 4) {$\B^* F$};
    \node (LM) at (-2.1, 2) {$F / H$};
    \node (MM) at (2.5, 2) {$\B^*(F / H)$};
    \node (RM) at (4.58, 2) {$= \B^* F / \B^* H$};
    \node (LB) at (-2.1, 0) {$\A_r^* t_r^* F_\univ$};
    \node (MB) at (2.5, 0) {$\B^* \A_r^* t_r^* F_\univ$};
    \draw [->] (MT) -- (LT);
    \draw [->] (MM) -- (LM);
    \draw [->] (MB) -- (LB);
    \draw [->] (LT) -- node [left] {$\scriptstyle f_{_H}$} (LM);
    \draw [->] (LM) -- node [left] {$\scriptstyle g_{_H}$} (LB);
    \draw [->] (MT) -- node [left] {$\scriptstyle \B^* f_{_H}$} (MM);
    \draw [->] (MM) -- node [left] {$\scriptstyle \B^* g_{_H}$} (MB);
    \draw [->] (MT) -- node [right] {$\scriptstyle f_{_{\B^* H}}$} (RM);
    \draw [->] (RM) -- node [right] {$\scriptstyle g_{_{\B^* H}}$} (MB);
   \end{tikzpicture}
  \end{equation}
  \begin{equation}
   \label{pf}
   \begin{tikzpicture}[baseline={([yshift=-10pt]current bounding box.north)}, scale=.88]
    \node (LT) at (-.5, 4) {$F$};
    \node (MT) at (3.5, 4) {$F / H_2$};
    \node (RT) at (6.5, 4) {$\A_{r_2}^* t_{r_2}^* F_\univ$};
    \node (LM) at (-.5, 2) {$F / H_1$};
    \node (MM) at (3.5, 2) {$F / H_1 \!\Big/\! H_2 / H_1$};
    \node (LB) at (-.5, 0) {$\A_{r_1}^* t_{r_1}^* F_\univ$};
    \node (MB) at (3.5, 0) {$\A_{r_1}^* t_{r_1}^* F_\univ \big/ H^{\,(p^{^{r_2 - r_1}}\!)}_\univ$};
    \draw [->] (LT) -- node [above] {$\scriptstyle f_{_{H_2}}$} (MT);
    \draw [->] (MT) -- node [above] {$\scriptstyle g_{_{H_2}}$} (RT);
    \draw [->] (LM) -- node [above] {$\scriptstyle f_{_{H_2 / H_1}}$} (MM);
    \draw [->] (LB) -- node [above] {$\scriptstyle \tilde{f}_{_{H_2 / H_1}}$} (MB);
    \draw [->] (LT) -- node [left] {$\scriptstyle f_{_{H_1}}$} (LM);
    \draw [->] (LM) -- node [left] {$\scriptstyle g_{_{H_1}}$} (LB);
    \draw [double] (MT) -- (MM);
    \draw [->] (MM) -- node [left] {$\scriptstyle \tilde{g}_{_{H_1}}$} (MB);
    \draw [->] (MB) -- node [right] {$\scriptstyle \tilde{g}_{_{H_2 / H_1}}$} (RT);
   \end{tikzpicture}
  \end{equation}
 \end{rmk}
\end{subsec}

\begin{subsec}{\tt{Definition of norm coherence}}
 \label{subsec:ncdef}
 Recall from the proof of Lemma \ref{lem:norm} that 
 $\Norm_{\psi^*} \co \CO_{G}$ $\to \CO_{G'}$ sends 
 a coordinate $x_{_G}$ on $\CG$ to the coordinate on $\CG' = \CG^{(p^r)}$ which pulls 
 back along $\psi = \Frob^r \co \CG \to \CG^{(p^r)}$ to $x_{_G}^{\,p^r}$.  In other words, 
 the norm map agrees with pushing forward a coordinate along the Frobenius isogeny.  

 This agreement on $x_{_G}$ over $k$ may not extend to $R$ for an arbitrary 
 coordinate $x$ on $\CF$ lifting $x_{_G}$.  On one hand, 
 given a subgroup $\CH \subset \CF$ of degree $p^r$, 
 the isogeny $f_{_H} \co \CF \to \CF / \CH$ lifts the norm map 
 in the sense that 
 \begin{align*}
  \label{x_H}
  x_{_H} \big(\,f_{_H}(P) \big) & = \prod_{Q \in \CH(\tR)} \big( x(P) 
                                  \underset{F}{+} x(Q) \big) & \text{by 
                                  \eqref{lubinisog}} \st \\
                                & = \prod_{\sigma \in \Aut(\CO_F / \CO_{F / H})} 
                                  \sigma \cdot x\,(P) \\
                                & = ~f_{_H}^* \, \Norm_{\,f_{_H}^*}(x) \, (P) \\
                                & = \Norm_{\,f_{_H}^*}(x) \big(\,f_{_H}(P) \big) 
                                  & \text{cf.~\eqref{norm}} 
 \end{align*}
 where $x_{_H}$ is the coordinate corresponding to the group law $F_x / H$, 
 $P$ is any $R$-point on $\CF$, 
 and $\tR$ is an extension of $R$ 
 to define the $p^r$ points of $\CH$.\footnote{A 
 detailed proof for the third equality in \eqref{x_H} for the 
 norm map, in the context of Galois theory analogous to the situation 
 here, can be found in \cite[pp.~916-920, esp.~Corollary 10.87]{AMA}.  
 Moreover, consider a coordinate on $\CF$ as a map $\CF \to \hat{\BA}^1$ 
 \eqref{subsec:fgl}.  
 We then have 
 \[
  \CF \xrightarrow{~f_{_H}} \CF' \to \hat{\BA}^1 
 \]
 and $\Norm_{\,f_{_H}^*}$ gives 
 \[
  \CF \xrightarrow{x} \hat{\BA}^1 \quad \mapsto \quad \CF' \xrightarrow{~x_{_H}} \hat{\BA}^1 
 \]
 which is analogous to a {\em norm map} as a piece of structure in a 
 Tambara functor \cite[3.1]{Tambara}.  
 This last notion of a norm map has been packaged into 
 equivariant stable homotopy theory and turned out as a key ingredient 
 in recent advances in the field \cite{Brun, Hill-Hopkins}.}  
 On the other hand, the isogeny $l_{_H} = g_{_H} \circ \, f_{_H}$ 
 lifts $\Frob^r$ canonically with respect to $\CH$; that is, if $\,f_{_H}'$ is another lift with kernel 
 $\CH$ and classifying $\star$-isomorphism $g_{_H}'$, then $g_{_H}' \circ f_{_H}' = l_{_H}$ 
 (Remark \ref{rmk:l_p}).  

 \begin{defn}
  \label{def:nc}
  Let $(F, i, \id)$ be a deformation of $G$ to $R$ as in \eqref{subsec:setup'}.  
  We say that it is {\em norm-coherent} 
  if given any finite subgroup $\CH$ of $\CF$, the identity 
  \begin{equation}
   \label{l=f}
   l_{_H} = \, f_{_H} 
  \end{equation}
  holds.  In other words, the condition is 
  that the $\star$-isomorphism $g_{_H} = \id$.  

  More generally, given any deformation $(F, i, \eta)$ of $G$ to $R$, 
  let $(\tF, i, \id)$ be the unique deformation associated to it (Remark 
  \ref{rmk:normalization}).  
  We say that $(F, i, \eta)$ is norm-coherent 
  if $(\tF, i, \id)$ is.  

  With the deformation structure understood, 
  we also call the formal group law $F$, 
  as well as its corresponding coordinate $x_{_F}$ on $\CF$, norm-coherent.  
 \end{defn}

 \begin{rmk}
  \label{rmk:nc}
  Explicitly, in terms of a norm-coherent coordinate $x$, 
  \eqref{l=f} boils down to the identity 
  \[
   h^{(p^r)} \big( l_{_H}(x) \big) = 
   \prod_{c \in \CH(\tR)} h \big( x(c) \underset{F}{+} x \big) 
  \]
  for all $h(x) \in R \lb x \rb$, where $h^{(p^r)}$ is the series obtained by 
  twisting the coefficients of $h$ with 
  the automorphism on $R$ which lifts the absolute $p^r$-power Frobenius (cf.~\eqref{norm'}).  
  A more conceptual form of this condition is 
  \begin{equation}
   \label{norm''}
   x_{_{F'\!, \psi_!(i, \eta)}} = \Norm_{\psi^*}(x_{_{F, i, \eta}}) 
  \end{equation}
  for any deformation $\psi \times \psi_!$ of Frobenius (cf.~\eqref{x_H}).  
  Indeed, if the isogeny 
  $\psi$ has kernel $\CH$, the pushforward $\psi_!(i, \eta)$ of deformation 
  structure indicates a change of coordinates on the target so that the left-hand side of 
  \eqref{norm''} corresponds to the formal group law $\A_r^* t_r^* F_\univ$ as in \eqref{l_H}.  
 \end{rmk}
\end{subsec}

\begin{subsec}{\tt{Functoriality of norm coherence}}
 \label{subsec:fun}
 Recall from \eqref{subsec:bc} and \eqref{subsec:pushforward} 
 the operations of base change and pushforward of deformation structures.  
 The notion of norm coherence in Definition \ref{def:nc} 
 is preserved under both as follows.  
 
 \begin{prop}
  \label{prop:fun}
  Let $(F, i, \eta)$ be a norm-coherent deformation of $G$ to $R$.  
  \begin{enumerate}[(i)]
   \item Given any local homomorphism $\B \co R \to R'$, the 
   deformation $\big( \B^* F, \B^*(i, \eta) \big)$ is norm-coherent.  

   \item Given any isogeny $\psi \co F \to F'$ over $R$, the deformation 
   $\big( F', \psi_!(i, \eta) \big)$ is norm-coherent.  
   In particular, given any 
   finite subgroup $\CH \subset \CF$ of degree $p^r$, 
   the deformation $(F / H, i \circ \sigma^r, \eta)$ is norm-coherent.  
  \end{enumerate}
 \end{prop}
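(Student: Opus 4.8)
The plan is to verify both statements directly from the defining condition of Definition~\ref{def:nc}, namely that $l_{_H} = f_{_H}$ (equivalently $g_{_H} = \id$) for every finite subgroup $\CH$ of $\CF$, using the base-change and composition functoriality recorded in \eqref{bc} and \eqref{pf} together with the uniqueness assertions of Proposition~\ref{prop:S} and \eqref{subsec:unique}. First I would reduce to the normalized case $\eta = \id$. For a general $\eta$, the $\star$-isomorphism $\f \co F \to \tF$ lifting $\eta$ as in \eqref{refine} is carried by base change along $\B$, and by pushforward along an isogeny $\psi$, to a $\star$-isomorphism of the same kind; hence both operations commute with passing to the associated normalized deformation, and it suffices to apply the argument below to $(\tF, i, \id)$.

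For (i), let $\CH'$ be a finite subgroup of $\B^*\CF$, defined over an extension $\widetilde{R'}$ of $R'$. Regard $\widetilde{R'}$ as an extension of $R$ through the composite $R \xrightarrow{\B} R' \hookrightarrow \widetilde{R'}$; then $\B^*\CF$ and $\CF$ become canonically identified over $\widetilde{R'}$, with matching deformation structures, so that $\CH'$ is simultaneously a finite subgroup of $\CF$. The isogeny $f_{_{H'}}$ of \eqref{subsec:quotient} and its canonical companion $l_{_{H'}}$ of \eqref{subsec:l_H} are formed from this common base change of the deformation and from the subgroup $\CH'$, hence agree whether computed for $\B^*F$ or for $F$ (cf.~\eqref{bc}). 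Since $(F, i, \id)$ is norm-coherent, $l_{_{H'}} = f_{_{H'}}$; as $\CH'$ was arbitrary, $\big(\B^*F, \bar{\B}\circ i, \id\big)$ is norm-coherent.

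For (ii), write $\CH_1 \ce \ker\psi \subset \CF$, of degree $p^r$. By \eqref{l_H}, \eqref{fgl}, and norm coherence of $F$ (so that $g_{_{H_1}} = \id$), the normalized deformation associated to $\big(F', \psi_!(i, \id)\big)$ is $(F/H_1,\, i\circ\sigma^r,\, \id)$ with $F/H_1 = \A_r^* t_r^* F_\univ$, and moreover $f_{_{H_1}} = l_{_{H_1}}$. We must show $F/H_1$ is norm-coherent. Any finite subgroup $\CH''$ of $\CF/\CH_1$ of degree $p^s$, defined over an extension $\widetilde{R}$ of $R$, equals $\CH_2/\CH_1$ for the finite subgroup $\CH_2 \ce f_{_{H_1}}^{-1}(\CH'') \subset \CF$ of degree $p^{r+s}$ containing $\CH_1$. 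The compatibility of the quotient--norm construction with a chain of subgroups---grouping the product \eqref{lubinisog} over the cosets of $\CH_1$ in $\CH_2$; see \cite[Proposition 2.2.6]{Ando95}---gives $f_{_{H''}}\circ f_{_{H_1}} = f_{_{H_2}}$, while \eqref{pf} gives $l_{_{H''}}\circ l_{_{H_1}} = l_{_{H_2}}$, the implicit $\star$-isomorphism there being the identity by the uniqueness of classifying maps (Proposition~\ref{prop:S}, cf.~\eqref{subsec:unique} and Remark~\ref{rmk:unique}). Norm coherence of $F$ gives $l_{_{H_1}} = f_{_{H_1}}$ and $l_{_{H_2}} = f_{_{H_2}}$, so $f_{_{H''}}\circ f_{_{H_1}} = f_{_{H_2}} = l_{_{H_2}} = l_{_{H''}}\circ l_{_{H_1}} = l_{_{H''}}\circ f_{_{H_1}}$. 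Since $f_{_{H_1}}^*$ is injective, $f_{_{H_1}}$ is an epimorphism and may be cancelled, yielding $l_{_{H''}} = f_{_{H''}}$; hence $F/H_1$, and therefore $\big(F', \psi_!(i, \id)\big)$, is norm-coherent. The concluding ``in particular'' is the special case $\psi = f_{_H}$, where $(f_{_H})_!(i, \id) = (i\circ\sigma^r, \id)$ once again because $g_{_H} = \id$.

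The step I expect to require the most care is securing the two composition identities $f_{_{H''}}\circ f_{_{H_1}} = f_{_{H_2}}$ and $l_{_{H''}}\circ l_{_{H_1}} = l_{_{H_2}}$ as genuine equalities of isogenies of formal group laws, rather than merely up to unnamed $\star$-isomorphisms, since the cancellation of $f_{_{H_1}}$ needs both chains on the same footing; here the explicit product formula \eqref{lubinisog} and the uniqueness built into Proposition~\ref{prop:S} do the real work. A secondary point is the verification, used in the reduction, that normalization of deformations commutes with base change and with pushforward.
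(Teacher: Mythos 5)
Your reduction to $\eta = \id$ and your part (ii) match the paper's own proof in all essentials: the paper likewise checks that normalization commutes with base change and pushforward, reduces a general isogeny to the canonical quotient $f_{_{H_1}}$, obtains $l_{_{H_2/H_1}} \circ l_{_{H_1}} = l_{_{H_2}}$ on the nose from \eqref{pf} precisely because $l_{_{H_1}} = f_{_{H_1}}$, and then cancels an epimorphism (the paper cancels $f_{_{H_2}}$ against the isomorphism $g_{_{H_2/H_1}}$ rather than cancelling $f_{_{H_1}}$, but the mechanism is the same injectivity of $f^*$, coming from $\CO_F$ being finite free over $\CO_{F/H}$). Your explicit observation that every finite subgroup of $\CF/\CH_1$ is $\CH_2/\CH_1$ with $\CH_2 = f_{_{H_1}}^{-1}(\CH'')$ is left implicit in the paper and is worth stating.

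Part (i) is where you diverge, and the step ``regard $\widetilde{R'}$ as an extension of $R$ \dots\ so that $\CH'$ is simultaneously a finite subgroup of $\CF$'' is the one I would not accept as written. Definition \ref{def:nc} quantifies over finite flat closed subgroup schemes of $\CF$ \emph{over $R$}: their points may lie in an extension $\tR$ of $R$, but the subgroup schemes themselves are defined over $R$ and are classified by local homomorphisms $A_r \to R$. For a general local homomorphism $\B$ --- which need be neither injective nor flat, e.g.\ the reduction $E_n \to k$ --- a finite subgroup $\CH'$ of $\B^*\CF$ over $R'$ is classified by a map $A_r \to R'$ that need not factor through $R$, so $\CH'$ is not (a base change of) a subgroup of $\CF$ in the sense of the definition, and $\widetilde{R'}$ is not an extension of $R$ in the sense the paper uses that word. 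You are therefore applying the norm-coherence hypothesis outside its stated scope. The paper's proof of (i) stays within that scope: it applies the hypothesis only to subgroups $\CH \subset \CF$ over $R$ and transports the identity $l_{_H} = f_{_H}$ to $\B^*\CH$ via the base-change functoriality $l_{_{\B^*H}} = \B^* l_{_H}$ of \eqref{bc}; this directly treats exactly the pulled-back subgroups, which is what the later applications (the $p$-torsion in Lemma \ref{klem} and in the proof of Proposition \ref{prop:nc}) actually use. If you want the conclusion of (i) for an arbitrary subgroup of $\B^*\CF$, that is a genuinely stronger assertion requiring an argument in the style of the proof of Proposition \ref{prop:nc} (reduction to the universal case together with the $p^{-1}\CH$ device), not a relabelling of the base.
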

 \begin{proof}
  For (i), first note that 
  \begin{align*}
            & ~ (\tF, i, \id) \stareq (F, i, \eta) \\
   \implies & ~ \big( \B^* \tF, \B^*(i, \id) \big) \stareq 
              \big( \B^* F, \B^*(i, \eta) \big) \\
   \implies & ~ \B^* \tF = \widetilde{\B^* F~} 
  \end{align*}
  To see that $\B^* \tF$ is norm-coherent, we have from \eqref{bc} 
  \[
   l_{_{\B^* H}} = \B^* l_{_H} = \B^* f_{_H} = f_{_{\B^* H}} 
  \]

  For (ii), suppose that $\psi$ is of degree $p^r$ 
  and let $i' = i \circ \sigma^r$.  
  In view of 
  \[
   \big( F', \psi_!(i, \eta) \big) 
   \stareq (F / H, i', \eta) 
   \stareq (\widetilde{F / H \,}\!, i', \id) 
   = (\tF / H, i', \id) 
  \]
  we are reduced to the special case of 
  \[
   (F, i, \id) \xrightarrow{\,f_{_H} \times \, {f_{_H}}_!} (F / H, i \circ \sigma^r, \id) 
  \]
  Since the source is norm-coherent, we have from \eqref{pf} 
  \[
   l_{_H} = f_{_H} \quad \ad \quad 
   l_{_{K / H}} \circ l_{_H} = l_{_K} = f_{_K} = f_{_{K / H}} \circ f_{_H} 
  \]
  where $\CK$ is any finite subgroup of $\CF$ containing $\CH$, 
  and the first composition is on-the-nose 
  because of the first identity in the display.  
  Given that $g_{_{K / H}}$ is an isomorphism, we then deduce from these 
  \[
   l_{_{K / H}} = \, f_{_{K / H}} 
  \]
  which shows the norm coherence of $(F / H, i \circ \sigma^r, \id)$.  
 \end{proof}
\end{subsec}

\section{Existence and uniqueness of norm-coherent deformations}
\label{sec:nc'}

The following generalizes a result of Ando's.  

\begin{prop}[{cf.~\cite[Theorem 2.5.7]{Ando95}}]
 \label{prop:nc}
 Let $k$, $G$, $R$, $F$ be as in \eqref{subsec:setup'} and fix $G / k$.  There 
 exists a unique formal group law $F'$ over $R$, $\star$-isomorphic to $F$, 
 that is norm-coherent.  In other words, given any coordinate $x_{_G}$ on the 
 formal group $\CG$ and a coordinate $x_{_F}$ on $\CF$ that lifts $x_{_G}$, 
 there exists a unique norm-coherent coordinate on $\CF$ whose corresponding formal 
 group law is $\star$-isomorphic to that of $x_{_F}$.  
\end{prop}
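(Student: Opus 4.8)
The approach is to adapt the proof of Ando's \cite[Theorem 2.5.7]{Ando95} to the present generality, obtaining existence and uniqueness simultaneously by realizing the norm-coherent coordinate as an $\fm$-adic limit that is pinned down stage by stage. Note first that the second assertion is a reformulation of the first: a formal group law $F'$ over $R$ that is $\star$-isomorphic to $F$ is the same datum as the coordinate on $\CF$ obtained by transporting $x_{_F}$ along the unique $\star$-isomorphism $F\to F'$ (a $\star$-isomorphism being a change of coordinate on $\CF$ compatible with the deformation structure), and by Definition \ref{def:nc} norm coherence is a property of that coordinate; so it suffices to produce a unique norm-coherent coordinate on $\CF$ lifting the given $x_{_G}$.

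The first step is to cut down the defining condition. I would show that \eqref{l=f} needs only be imposed for the subgroups $\CF[p^m]$, $m\geq 1$, which form an increasing chain: any finite subgroup $\CH\subset\CF$ of degree $p^r$ is annihilated by $p^r$, hence lies in $\CF[p^m]$ for $m\geq r$, and $\CF[p^m]$ is the kernel of the $m$-fold iterate of the $p$-torsion quotient. Using the quotient-compatibility diagram \eqref{pf} together with Proposition \ref{prop:fun}, one checks that the identities $g_p=\id$ holding for $F$ and for the successive normalized quotients $F/F[p]$, $(F/F[p])/(F/F[p])[p]$, $\dots$ force $g_{_H}=\id$ for an arbitrary $\CH$. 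This yields the recursive characterization: $(F,i,\id)$ is norm-coherent if and only if $g_p=\id$ for $F$ and the normalized quotient $\widetilde{F/F[p]}$ is itself norm-coherent.

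Next I would build the coordinate. Starting from $x_0=x_{_F}$, form a sequence $x_0,x_1,x_2,\dots$ of coordinates on $\CF$, all lifting $x_{_G}$, where $x_j$ is obtained from $x_{j-1}$ by the unique change of coordinate repairing the identity $g_p=\id$, for $F$ and for the first $j$ stages of the tower, modulo $\fm^{\,j+1}$. The point is that the $\star$-isomorphism $g_p$ attached to a coordinate reduces to the identity over the residue field---by construction through Proposition \ref{prop:S}, the norm formula \eqref{lubinisog}, and the fact (Remark \ref{rmk:l_p}) that both $l_p$ and $f_p$ restrict to the relative $p^n$-power Frobenius---so precomposing with $g_p^{-1}$ alters the coordinate only in $\fm$-adic order $\geq 1$; tracking the depth of the correction at stage $j$ gives $x_j\equiv x_{j-1}\pmod{\fm^{\,j}}$, and since $R$ is complete local the limit $x_\infty=\lim_j x_j$ exists. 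By continuity of the norm map and of the Lubin--Tate classifying homomorphisms in the $\fm$-adic topology, $x_\infty$ satisfies $g_p=\id$ for $F$ and for every stage of the tower, hence is norm-coherent by the recursive characterization; its formal group law $F'$ is the one sought. For uniqueness, if $y$ is any norm-coherent coordinate on $\CF$ lifting $x_{_G}$, then the recursive characterization forces $y$ to satisfy $g_p=\id$ at every stage, and the stage-by-stage analysis shows $y\equiv x_j\pmod{\fm^{\,j+1}}$ for all $j$, so $y=x_\infty$; this also shows $F'$ independent of the choices made.

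The main obstacle is the convergence estimate underlying the construction: one must prove precisely that imposing $g_p=\id$ at stage $j$ of the tower perturbs the coordinate---and disturbs the conditions already secured at stages $<j$---only at $\fm$-adic order growing with $j$. This rests on a careful analysis, modulo powers of $\fm$, of the interaction of the norm $\Norm_{\,f_p^*}$ of \eqref{lubinisog}, the structure map $t_n$ and classifying homomorphism $\A_n$ of Proposition \ref{prop:S}, and the absolute $p^n$-power Frobenius twist entering $g_p$ as in \eqref{norm'}---using crucially that $k$ is algebraic over $\BF_p$, so that these Frobenius twists are governed by the (pro-finite) Galois action and do not spoil the estimates. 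The remaining work is bookkeeping with the commutative diagrams of \S\S\ref{sec:defofgl}--\ref{sec:nc}.
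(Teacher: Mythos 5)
Your outline has the right general shape---an $\fm$-adic successive-approximation argument plus a reduction of the full condition to the $p$-power-torsion tower---but it leaves unproved exactly the two steps that carry the content of the theorem. First, the existence of ``the unique change of coordinate repairing the identity $g_p=\id$ modulo $\fm^{\,j+1}$'' is precisely what has to be established, and it is not a formal consequence of $g_p$ reducing to the identity on the special fiber. The correction that works is \emph{not} precomposition with $g_p^{-1}$: writing $g_p^{\,y}(t)=t+a(t)$ with coefficients in $I^{\,r-1}$ (in the universal case over $E_n$), one must pass to $z=\d(y)$ with $\d(t)=t-a^{(-p^n)}(t)$, where the coefficients of $a$ are twisted by the \emph{inverse} of the local automorphism $\A_n t_n$ of $E_n$; only then does the comparison of $\td\circ g_p^{\,y}\circ f_p^{\,y}$ with $g_p^{\,z}\circ f_p^{\,z}\circ\d$ modulo $I^{\,r}$, using \eqref{lubinisog} and \eqref{defofrob}, show that the error drops to $I^{\,r}$. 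Identifying this twist (and knowing $\A_n t_n$ is invertible, which is where perfectness of $k$ enters) is the heart of the proof, and your proposal defers it entirely to ``a careful analysis.'' Relatedly, your scheme imposes $g_p=\id$ at all tower stages simultaneously and must then control how fixing stage $j$ disturbs stages $<j$; this complication is avoidable, since in the universal case the higher stages are base changes of $F$ along powers of the automorphism $\A_n t_n$ and so inherit $g_p=\id$ from stage one for free.

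Second, the reduction step is not justified by \eqref{pf} and Proposition \ref{prop:fun} alone. For $\CH\subset\CK=\CF[p^{\,r}]$ those inputs give only a relation of the form $\tilde g_{_{K/H}}\circ\tilde g_{_{H}}=\id$ (from $g_{_K}=\id$ and $f_{_K}=f_{_{K/H}}\circ f_{_H}$), which does not isolate $g_{_H}$; containment of $\CH$ in a tower group is not enough. What makes the reduction go through is the \emph{uniqueness}, within a $\star$-isomorphism class, of the coordinate satisfying the single condition $l_p=f_p$ (established by a separate mod-$\fm^{\,r}$ contradiction argument), combined with the subgroup $p^{-1}\CH=\{c\in\CF\mid pc\in\CH\}$, which contains both $\CH$ and $\CF[p]$ and satisfies $p^{-1}\CH/\CF[p]\cong\CH$ and $p^{-1}\CH/\CH=(\CF/\CH)[p]$. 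Comparing the two factorizations of $f_{p^{-1}H}$ through $\CF/\CF[p]$ and through $\CF/\CH$ shows that both $F/H$ and $\A_r^*t_r^*F$ satisfy $l_p=f_p$, and the uniqueness then forces $g_{_H}=\id$. Without this mechanism your ``recursive characterization'' is an assertion, not a proof, and your uniqueness argument at the end implicitly relies on it. So the proposal as written has genuine gaps at both the analytic core and the descent-to-all-subgroups step.
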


To show this, we will follow Ando's proof of his theorem, making 
alterations for greater generality whenever necessary 
(most significantly in \eqref{ddef}).  The argument breaks into 
two parts, the first focusing on norm coherence for the $p$-torsion subgroup 
$\CF[p]$ and the second showing functoriality for all finite subgroups.  
We begin with the following key lemma.  

\begin{lem}[{cf.~[\ib, Theorem 2.6.4]}]
 \label{klem}
 Given any coordinate $x_{_F}$ on $\CF$ that lifts $x_{_G}$, 
 there exists a unique coordinate on $\CF$ whose corresponding formal 
 group law is $\star$-isomorphic to that of $x_{_F}$ and satisfies 
 \begin{equation}
  \label{nc}
  l_p = \, f_p 
 \end{equation}
\end{lem}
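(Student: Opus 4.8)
The plan is to reformulate \eqref{nc} as a single equation on coordinates of $\CF$ and solve it by $\fm$-adic successive approximation, following Ando's proof of [\ib, Theorem 2.6.4] but with the inductive correction adapted to a general residue field. By Remark \ref{rmk:l_p}, $l_p = g_p \circ f_p$ with $g_p \co F / F[p] \xrightarrow{\sim} \A_n^* t_n^* F_\univ$ a $\star$-isomorphism, so \eqref{nc} holds for a coordinate $x_{_F}$ exactly when $g_p = \id$; equivalently, the formal group law $F / F[p]$, carrying the coordinate $\Norm_{\,f_p^*}(x_{_F})$, must agree on the nose with $\A_n^* t_n^* F_\univ$, carrying its distinguished coordinate, say $w$. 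A $\star$-isomorphic change $x_{_F} \rightsquigarrow \chi(x_{_F})$ replaces $\Norm_{\,f_p^*}(x_{_F})$ by $\Norm_{\,f_p^*}\!\big(\chi(x_{_F})\big)$ but leaves $\A_n$, hence $w$, fixed --- the classifying map of a deformation of Frobenius depends only on its isomorphism class (Proposition \ref{prop:S}), and $\CF[p]$ is the coordinate-free full $p$-torsion. Thus the lemma is equivalent to: \emph{there is a unique coordinate $x$ on $\CF$, $\star$-isomorphic to $x_{_F}$, with $\Norm_{\,f_p^*}(x) = w$.}

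The computational heart is an analysis of how $\Norm_{\,f_p^*}$ responds to a coordinate change. Writing $\,f_p^*\,\Norm_{\,f_p^*}(a) = \prod_{Q \in \CF[p](\tR)} \sigma_Q(a)$ as in \eqref{x_H}, with $\sigma_Q$ the translation by $Q$, and linearizing, one finds that a change $\chi = \id + \epsilon$ with $\epsilon$ of $\fm$-adic order $a$ moves $\Norm_{\,f_p^*}(x)$ by a series of order $a + 1$ whose leading layer in $\fm^{a+1} / \fm^{a+2}$ is a $\sigma^n$-semilinear, bijective function of $\epsilon$ modulo $\fm^{a+1}$; the twist and the loss of one layer occur because the naive linear term carries a factor $p^n \in \fm$ and vanishes, leaving only the ``freshman'' $p^n$-th-power contribution, which one reads off from $\,f_p \equiv \Frob^n$ on the special fibre. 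When $k = \BF_p$, as in Ando's setting, $\sigma^n$ is trivial on $\BW \BF_p = \BZ_p$ and this twist is invisible; for a general algebraic $k / \BF_p$ it is genuine, and the perfectness of $k$ is exactly what lets the required root extraction be performed. This is the step I expect to be the main obstacle, and it is where Ando's argument must be altered as flagged before the lemma.

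Granting this, the approximation is routine. Beginning with $x_0 \ce x_{_F}$, a preliminary modification puts the discrepancy $e_0 \ce \Norm_{\,f_p^*}(x_0) - w$ into $\fm^2 \cdot \CO_{\CF / \CF[p]}$ (it already lies in $\fm \cdot \CO_{\CF / \CF[p]}$ since both coordinates lift the canonical one on $\CG^{(p^n)}$; pushing it one layer deeper is a short check with the insensitivity above that I omit here). Inductively, given $x_m$ with $e_m \ce \Norm_{\,f_p^*}(x_m) - w$ of order $m + 2$, use the surjectivity above to pick $\chi_m \equiv \id$ modulo $\fm^{m+1}$ so that the change it induces cancels the leading layer of $e_m$, and set $x_{m+1} \ce \chi_m(x_m)$; then $e_{m+1}$ has order $m + 3$. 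The $x_m$ are coordinates on $\CF$, all $\star$-isomorphic to $x_{_F}$, and since $\chi_m - \id$ has coefficients in $\fm^{m+1}$ the sequence is $\fm$-adically Cauchy; as $R$ is complete, $x \ce \lim_m x_m$ is a coordinate on $\CF$, $\star$-isomorphic to $x_{_F}$, with $\Norm_{\,f_p^*}(x) = w$, i.e.\ satisfying \eqref{nc}.

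Uniqueness follows from the injectivity in the same computation. If $x$ and $x'$ are both $\star$-isomorphic to $x_{_F}$ and satisfy \eqref{nc}, the $\star$-isomorphism $\chi \co F_x \to F_{x'}$ leaves $\Norm_{\,f_p^*}$ unchanged; if $\chi \neq \id$, letting $\fm^a$ be the first layer on which $\chi$ deviates from $\id$ (possible as $R$ is $\fm$-adically separated), the leading layer of the induced change in the norm would be a nonzero element of $\fm^{a+1} / \fm^{a+2}$, a contradiction. Hence $\chi = \id$ and $x = x'$.
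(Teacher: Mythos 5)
Your reformulation of \eqref{nc} as an equation on quotient coordinates is fine, but the computational core of your argument is false, and the strategy of freezing the target coordinate $w$ cannot work. The claim that a change $\chi = \id + \epsilon$ with $\epsilon$ of $\fm$-adic order $a$ moves $\Norm_{\,f_p^*}(x)$ by a class in $\fm^{a+1}/\fm^{a+2}$ depending bijectively ($\sigma^n$-semilinearly) on $\epsilon$ modulo $\fm^{a+1}$ fails already for the multiplicative formal group law over $\BZ_p$ (where $n=1$, $f_p^*\,\Norm_{\,f_p^*}(x) = (1+x)^p-1 \eqqcolon y$): for $\chi(t) = t + c\,(t+t^2)$ with $0 \neq c \in \fm^a = (p^a)$ one computes
\[
 \prod_{\zeta^p=1}\chi\big((1+x)\zeta-1\big) \;-\; \big((1+x)^p-1\big) \;=\; c^p\, y\,(1+y),
\]
which lies in $\fm^{pa} \subseteq \fm^{a+2}$ for $p$ odd. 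So a nonzero class of $\epsilon$ has zero leading-layer effect: your linearized map is not injective, and neither your uniqueness argument nor the surjectivity needed to cancel $e_m$ survives. The failure is structural, not just a miscomputation: with $w$ frozen, your own (correct) observation that the norm is insensitive modulo $\fm^{a+1}$ to order-$a$ perturbations means no $\star$-isomorphic coordinate change can alter $\Norm_{\,f_p^*}(x) \bmod \fm^2$ at all, so the ``preliminary modification'' pushing $e_0$ from $\fm$ into $\fm^2$ is impossible to carry out, and an order-one discrepancy (which is all that the $\star$-isomorphism $g_p$ guarantees) could never be removed by your iteration.

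The paper's proof avoids this by first reducing to the universal case $F = F_\univ$ over $E_n$, where the target coordinate is $\A_n^* t_n^*$ of the source coordinate and therefore moves with it. There the correction $\d = \id - a^{(-p^n)}$ is taken at the \emph{same} layer $I^{r-1}$ as the current discrepancy $a$; it changes the target coordinate by $\td = \A_n^* t_n^* \d = \id - a$, an honest same-layer bijection (the coefficient twist by the Frobenius lift $\A_n t_n$), and this is what cancels $a$ on the nose. The insensitivity of the norm --- your factor $p^n$ --- is used only to show that the error reintroduced on the source side lies one layer deeper, in $I^r$, so the iteration converges; it is never asked to produce the cancellation. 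The uniqueness argument likewise derives its contradiction from the target-side twist $c^{(p^n)}(t^{p^n})$, not from injectivity of the linearized norm. To repair your proof you would need to let $w$ vary with the coordinate, which in practice means making the reduction to the universal case that you skipped.
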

\begin{proof}
 \hskip .36cm {\bf Existence} \hskip .18cm First we reduce the proof to the universal 
 case.  Let $F_\univ$ be a universal deformation of $G / k$ to $E_n$ as in 
 Proposition \ref{prop:LT}, so that there is a unique local homomorphism 
 \[
  \A \co E_n \to R 
 \]
 together with a unique $\star$-isomorphism 
 \[
  g \co F \to \A^* F_\univ 
 \]
 Suppose that we can construct a coordinate $x$ on $\CF_\univ$ 
 whose corresponding formal group law $F_\univ'$ satisfies \eqref{nc} and is 
 $\star$-isomorphic to $F_\univ$.  
 Taking $\CH = \CF_\univ [p]$ in the proof of Proposition \ref{prop:fun}\,(i), 
 we then see that $\A^* F_\univ'$ satisfies \eqref{nc} and 
 is $\star$-isomorphic to $F$.  

 We turn to the universal case.  The proof is inductive, on powers of the 
 maximal ideal $I$ of $E_n$.  Let $y$ be the coordinate 
 corresponding to $F_\univ$ from above, so we may write $F_y \ce F_\univ$.   
 With respect to $y$, given that 
 $g_p^y \co F_y / F[p] \to \A_n^* t_n^* F_y$ is defined over $E_n$ as in 
 \eqref{subsec:f_p}, let $a(t) \in E_n \lb t \rb$ be such that 
 \begin{equation}
  \label{adef}
  g_p^y(t) = t + a(t) 
 \end{equation}
 We shall construct a desired coordinate $x$ on the universal formal group $\CF$ by inductively modifying the 
 coordinate $y$ so that $a(t) \equiv 0 \md I^r$ for increasing $r$.  
 
 Let the inductive hypothesis be 
 \begin{equation}
  \label{ahyp}
  a(t) = \sum_{j \geq 1} a_j \, t^{\,j} \hskip 1cm \text{with ~ $a_j \in 
  I^{r - 1}$} 
 \end{equation}
 Since $g_p^y$ is a $\star$-isomorphism, we get automatically the case $r = 2$.  
 Let $\d(t)$ be the power series 
 \begin{equation}
  \label{ddef}
  \d(t) = t - a^{(-p^n)}(t) 
 \end{equation}
 where $a^{(-p^n)}(t)$ is the series obtained by twisting the coefficients 
 $a_j$ with the inverse of the local automorphism $\A_n t_n$ on $A_0 = E_n$ 
 and has its coefficients in $I^{r - 1}$ as well.\footnote{By Proposition \ref{prop:S}, 
 $\A_n t_n$ lifts the $p^n$-power Frobenius $i' = \sigma^n$ on $k$ to $\BW k$.  
 Moreover, it alters each generator $u_i$ of $E_n$ by a unit, 
 as any degree-$p^n$ isogeny out of $F$ 
 differs by an isomorphism 
 from the multiplication-by-$p$ endomorphism on $F$ 
 (see \cite[1.5-1.6]{Lubin67}).}  
 The coordinate 
 \begin{equation}
  \label{z}
  z \ce \d(y) 
 \end{equation}
 on $\CF$ yields a formal group law $F_z$ over $E_n$ such that $\d \co F_y \to 
 F_z$ is a $\star$-isomorphism.  With respect to $z$, let $b(t) \in E_n \lb t \rb$ be such that 
 \begin{equation}
  \label{bdef}
  g_p^z(t) = t + b(t) 
 \end{equation}
 We will show that this choice of coordinate $z$ gives 
 \begin{equation}
  \label{bhyp}
  b(t) = \sum_{j \geq 1} b_j \, t^{\,j} \hskip 1cm \text{with ~ $b_j \in I^r$} 
 \end{equation}
 and in particular produces the equation 
 \[
  g_p^z(t) \equiv t \md I^r 
 \]
 Note that the formal group laws $F_y$ and $F_z$ coincide modulo $I^{r - 1}$.  
 Thus, by induction and Krull's intersection theorem, we will then obtain in the 
 limit a coordinate $x$ such that $g_p^x(t) = t$, or $l_p^x(t) = f_p^x(t)$, as 
 desired.  
 
 Consider the diagram 
 \begin{equation}
  \label{dsq}
  \begin{tikzpicture}[baseline={([yshift=-10pt]current bounding box.north)}, scale=.88]
   \node (LT) at (0, 3) {$F_y$};
   \node (RT) at (4, 3) {$F_z$};
   \node (LB) at (0, 0) {$\A_n^* t_n^* F_y$};
   \node (RB) at (4, 0) {$\A_n^* t_n^* F_z$};
   \draw [->] (LT) -- node [above] {$\scriptstyle \d$} (RT);
   \draw [->] (LB) -- node [above] {$\scriptstyle \td$} (RB);
   \draw [->] (LT) -- node [left] {$\scriptstyle l_p^{\,y}$} (LB);
   \draw [->] (RT) -- node [right] {$\scriptstyle l_p^{\,z}$} (RB);
  \end{tikzpicture}
 \end{equation}
 where $\td \ce \A_n^* t_n^* \d$ is a $\star$-isomorphism.\footnote{The 
 classifying maps for $F_y / F[p]$ and $F_z / F[p]$ are both $\A_n t_n$ 
 because $F_y$ and $F_z$ are $\star$-isomorphic (Remark \ref{rmk:unique}).}  
 By the unique characterization of $l_p$ in Remark \ref{rmk:l_p}, 
 we have $\td \circ l_p^{\,y} \circ \d^{-1} = l_p^{\,z}$.  Thus 
 the diagram commutes and we get $\td \circ l_p^{\,y}(t) = l_p^{\,z} \circ \d(t)$, or 
 \begin{equation}
  \label{compare}
  \td \circ g_p^y \circ f_p^y(t) = g_p^z \circ f_p^z \circ \d(t) 
 \end{equation}
 We shall compare the two sides of \eqref{compare} modulo $I^r$ to show 
 \eqref{bhyp} and thus complete the induction.  

 The left-hand side of \eqref{compare} can be evaluated modulo $I^r$ as 
 follows.  
 \begin{align*}
  \label{lhs}
  \td \circ g_p^y \circ f_p^y(t) & = \td \big(\,f_p^y(t) + a \circ f_p^y(t) 
                                   \big) & \text{by \eqref{adef}} \st \\
                                 & \equiv \td \big(\,f_p^y(t) + a(t^{\,p^n}) 
                                   \big) & \text{by \eqref{defofrob} and 
                                   \eqref{ahyp}} \\
                                 & = f_p^y(t) + a(t^{\,p^n}) - a \big(\,f_p^y(t) 
                                   + a(t^{\,p^n}) \big) & \text{by \eqref{ddef}} 
                                   \\
                                 & \equiv f_p^y(t) + a(t^{\,p^n}) - a 
                                   \big(\,f_p^y(t) \big) & \text{by 
                                   \eqref{ahyp}} \\
                                 & \equiv f_p^y(t) + a(t^{\,p^n}) - a(t^{\,p^n}) 
                                   & \text{by \eqref{defofrob} and \eqref{ahyp}} 
                                   \\
                                 & = f_p^y(t) 
 \end{align*}
 For the right-hand side of \eqref{compare}, first note that modulo $I^r$ we 
 have 
 \begin{align*}
  \label{rhs}
  f_p^z \circ \d(t) & = \prod_{c \in \CF[p](\tE_n)} \big( z(c) \underset{F_z}{+} 
                      \d(t) \big) & \text{by \eqref{lubinisog}} \st \\
                    & = \prod_c \d \big( y(c) \underset{F_y}{+} t \big) & 
                      \text{by \eqref{z}} \\
                    & = \prod_c \big[ \big( y(c) \underset{F_y}{+} t \big) - 
                      a^{(-p^n)} \big( y(c) \underset{F_y}{+} t \big) \big] & 
                      \text{by \eqref{ddef}} \\
                    & \equiv \prod_c \big( y(c) \underset{F_y}{+} t \big) \\
                    & \quad - \sum_c a^{(-p^n)} \big( y(c) \underset{F_y}{+} t \big) 
                      \prod_{d \neq c} \big( y(d) \underset{F_y}{+} t \big) & 
                      \text{by \eqref{ahyp}} \\
                    & \equiv f_p^y(t) - \sum_c a^{(-p^n)}(t) \, t^{\,p^n - 1} & 
                      \text{by \eqref{lubinisog}, \eqref{ahyp} and 
                      \eqref{defofrob}} \\
                    & = f_p^y(t) - p^n a^{(-p^n)}(t) \, t^{\,p^n - 1} \\
                    & \equiv f_p^y(t) & \text{by \eqref{ahyp} and since 
                      $p \in I$} 
 \end{align*}
 In particular, by \eqref{defofrob}, this gives 
 \begin{equation}
  \label{rhs'}
  f_p^z \circ \d(t) \equiv t^{\,p^n} \md I 
 \end{equation}
 Thus, given $k \geq 2$, if in \eqref{bdef} we have 
 \[
  b(t) = \sum_{j \geq 1} b_j \, t^{\,j} \hskip 1cm \text{with ~ $b_j \in 
  I^{k - 1}$} 
 \]
 then for $k \leq r$, on the right-hand side of \eqref{compare} we have 
 \begin{align*}
  g_p^z \circ f_p^z \circ \d(t) & = f_p^z \circ \d(t) + b \big(\,f_p^z \circ 
                                  \d(t) \big) & \text{by \eqref{bdef}} \\
                                & \equiv f_p^y(t) + b(t^{\,p^n}) \md I^k &
                                  \text{by \eqref{rhs} and \eqref{rhs'}} 
 \end{align*}
 Comparing this to \eqref{lhs}, we get 
 \[
  b(t) \equiv 0 \md I^k 
 \]
 Since $g_p^z$ in \eqref{bdef} is a $\star$-isomorphism, 
 we can proceed by induction on $k$ and obtain 
 \[
  b(t) \equiv 0 \md I^r 
 \]
 which implies 
 \eqref{bhyp}.  

 {\bf Uniqueness} \hskip .18cm Let $\CF / R$ be a deformation of $\CG / k$.  
 Let $x$ and $y$ be two coordinates on $\CF$, both lifting $x_{_G}$ on $\CG$ and both 
 satisfying \eqref{nc}.  Suppose $F_x$ and $F_y$ are in the same 
 $\star$-isomorphism class so that 
 there is a 
 $\star$-isomorphism $\d \co F_x \to F_y$ fitting into a commutative diagram 
 analogous to \eqref{dsq}.  
 \[
  \begin{tikzpicture}[scale=.88]
   \node (LT) at (0, 3) {$F_x$};
   \node (RT) at (4, 3) {$F_y$};
   \node (LB) at (0, 0) {$F_x / F[p]$};
   \node (RB) at (4, 0) {$F_y / F[p]$};
   \draw [->] (LT) -- node [above] {$\scriptstyle \d$} (RT);
   \draw [->] (LB) -- node [above] {$\scriptstyle \td$} (RB);
   \draw [->] (LT) -- node [left] {$\scriptstyle l_p^{\,x}\,=\,f_p^x$} (LB);
   \draw [->] (RT) -- node [right] {$\scriptstyle f_p^y\,=\,l_p^{\,y}$} (RB);
  \end{tikzpicture}
 \]
 Let $\fm$ be the maximal ideal of 
 $R$.  Let $c(t) \in R \lb t \rb$ be such that 
 \begin{equation}
  \label{adef'}
  \d(t) = t + c(t) 
 \end{equation}
 where 
 \[
  c(t) = \sum_{j \geq 1} c_j \, t^{\,j} \hskip 1cm \text{with ~ $c_j \in \fm$} 
 \]
 Since $x$ and $y$ are distinct, there exists $r_0 \geq 2$ such that it is the 
 largest $r$ satisfying 
 \begin{equation}
  \label{r}
  c_j \in \fm^{r - 1} \hskip 1cm \text{for all $j$} 
 \end{equation}
 Modulo $\fm^{r_0}$ we then have 
 \begin{align*}
  f_p^y(t) & = \tilde{\d} \circ f_p^x \circ \d^{-1} (t) \\
           & = f_p^x \circ \d^{-1} (t) + c^{(p^n)} \circ f_p^x \circ \d^{-1} (t) 
             & \text{by \eqref{adef'}} \\
           & \equiv f_p^x \circ \d^{-1} (t) + c^{(p^n)} (t^{\,p^n}) & \text{by 
             \eqref{defofrob} and \eqref{r}} \\
           & \equiv f_p^y(t) + c^{(p^n)} (t^{\,p^n}) & \text{analogous to 
             \eqref{rhs}} 
 \end{align*}
 which is a contradiction.  
\end{proof}

\begin{proof}[Proof of Proposition \ref{prop:nc}] 
 \hskip -.18cm (cf.~\cite[proof of Proposition 2.6.15]{Ando95}) \hskip .18cm We need only 
 show that the coordinate $x$ on $\CF$ constructed in Lemma \ref{klem} 
 satisfies the stronger condition $l_{_H} = f_{_H}$ for any finite $\CH \subset \CF$.  
 As in the proof of existence there, we are reduced to the 
 universal case with $F_\univ \eqqcolon F = F_x$ over $E_n$.  
 
 Given any $\CH \subset \CF$ of degree $p^r$, we will show that the 
 $\star$-isomorphism 
 \begin{equation}
  \label{g_H}
  g_{_H} \co F / H \to \A_r^* t_r^* F 
 \end{equation}
 is the identity by the uniqueness from Lemma \ref{klem}.  Namely, the 
 source and target are in the same $\star$-isomorphism class, and we show 
 that both of them satisfy \eqref{nc}.  That the target does is clear 
 from the proof of 
 Proposition \ref{prop:fun}\,(i).  
 For the source of \eqref{g_H}, let 
 $p^{-1} \CH \ce \{c \in \CF \,|\, p c \in \CH\}$.\footnote{The notation 
 $c \in \CF$ means $[c] \subset \CF$, 
 where $[c]$ is the effective Cartier divisor 
 defined by a section.  
 To be precise, this set-theoretic description 
 defines the subgroup scheme $p^{-1} \CH$ of $\CF$ 
 as a sum of effective Cartier divisors.}  
 It contains both $\CH$ and $\CF[p]$ as subgroups.  We need to show 
 \begin{equation}
  \label{equal}
  l_{_{p^{-1} H / H}}^{\,x_{_H}} = f_{_{p^{-1} H / H}}^{\,x_{_H}} 
 \end{equation}
 where $x_{_H}$ is the coordinate corresponding to the formal group law $F_x / H$.  

 Consider the following commutative diagram, where $x_p$ denotes the coordinate 
 corresponding to $F_x / F[p]$.  The upper rectangle commutes due to the functoriality of 
 the isogeny $f$ under quotient [\ib, Proposition 2.2.6].  The 
 lower rectangle commutes due to the functoriality from Proposition \ref{prop:S} 
 of the $\star$-isomorphisms $g$ under quotient.\footnote{Here we view 
 $F_x / H = F_{x_{_H}}$ 
 as a universal deformation with structure $(\sigma^r, \id)$ \eqref{subsec:univ}.}  
 \[
  \begin{tikzpicture}[scale=.88]
   \node (LT) at (0, 4) {$F_x$};
   \node (RT) at (10, 4) {$F_x / H$};
   \node (LM) at (0, 2) {$F_x / F[p]$};
   \node (RM) at (10, 2) {$F_x / p^{-1}H$};
   \node (LB) at (0, 0) {$(\A_n^x)^* (t_n^{\,x})^* F_x$};
   \node (RB) at (10, 0) {$(\A_n^{x_{_H}})^* (t_n^{\,x_{_H}})^* F_{x_{_H}}$};
   \node (RB1) at (5.95, 0) {$(\A_n^x)^* (t_n^{\,x})^* F_x / (\A_n^x)^* (t_n^{\,x})^* H =$};
   \draw [->] (LT) -- node [above] {$\scriptstyle f_{_H}^x$} (RT);
   \draw [->] (LB) -- node [above] {$\scriptstyle \tilde{f}_{_H}^x$} (RB1);
   \draw [->] (LM) -- node [above] {$\scriptstyle f_{_{p^{-1}H / F[p]}}^{\,x_{_p}}$} 
         (RM);
   \draw [->] (LT) -- node [left] {$\scriptstyle f_p^x$} (LM);
   \draw [double] (LM) -- node [left] {$\scriptstyle g_p^x$} (LB);
   \draw [->] (RT) -- node [right] {$\scriptstyle f_{_{p^{-1}H / H}}^{\,x_{_H}}$} (RM);
   \draw [->] (LT) -- node [above] {$\scriptstyle f_{p^{-1}H}^x$} (RM);
   \draw [->] (RM) -- node [right] {$\scriptstyle g_{_{p^{-1}H / H}}^{x_{_H}}$} (RB);
  \end{tikzpicture}
 \]
 Note that $p^{-1}\CH / \CF[p] \cong \CH$.  In the lower rectangle, 
 $g_p^x = \id$ and hence $f_{_{p^{-1}H / F[p]}}^{\,x_{_p}} = \tilde{f}_{_H}^x$.  This 
 forces the isomorphism $g_{_{p^{-1}H / H}}^{x_{_H}}$ to be the identity, and \eqref{equal} follows.  
\end{proof}

\begin{rmk}
 In \cite[\S 3.3]{ho}, for the purpose of studying 
 Hecke operators in elliptic cohomology, 
 we showed the existence of an analogue of Ando's coordinate.  It is conceptually 
 different from the norm-coherent coordinates here.  
 Note that there the base change is not along a {\em local} homomorphism 
 (cf.~\cite[\S 4, footnote]{p3} and see \eqref{subsec:morefun} below).  
\end{rmk}

\begin{ex}
 Let $k = \BF_{p^2}$ and $G$ be the formal group law of a supersingular elliptic 
 curve over $k$.  We choose a curve such that its $p^2$-power Frobenius 
 endomorphism coincides with the map of multiplication by $(-1)^{\,p - 1} p$, as 
 in \cite[3.24]{ho}.  We then have $l_p = [-p]$ if $p = 2$ by rigidity.  
 
 Let $\bE$ be the Morava E-theory associated to $G / k$, and choose a {\em 
 preferred $\CP_N$-model} for $\bE$ in the sense of [\ib, Definition 3.29].  
 In particular, there is a chosen coordinate $u$ on the universal deformation of 
 $\CG / k$.  Given [\ib, 3.28], the cotangent map along $f_p^u$ is 
 multiplication by $p$.  Thus, by the criterion \eqref{nc}, $u$ 
 cannot be norm-coherent if $p = 2$.  
\end{ex}

\begin{subsec}{\tt{More functoriality of norm coherence}}
 \label{subsec:morefun}
 We continue the discussion in \eqref{subsec:fun} with varying $G / k$.  
 Let $\Coord \co \FG_\isog \to \Set$ be the ``wide functor'' 
 \[
  \CG / k \mapsto \{\text{coordinates on $\CG$}\} \subset \CO_G 
 \]
 in the following sense: 
 given the diagram \eqref{widemor}, $\Coord$ is 
 contravariant along the right square and covariant along 
 the left square.  More specifically, $\Coord$ is 
 contravariant with respect to base change 
 $\Spf k \to \Spf k'$ and pullback along an isomorphism over $k$, 
 hence contravariant with respect to any morphism in the subcategory $\FG_\iso$.  
 On the other hand, given an isogeny $\CG \to \CG'$ over $k$ 
 of degree $p^r$, 
 any coordinate $x$ on $\CG$ determines a unique coordinate on $\CG^{(p^r)}$ 
 which pulls back along $\Frob^r$ to $x^{\,p^r}$; this coordinate on $\CG^{(p^r)}$ then 
 corresponds to one on $\CG'$ via the isomorphism between the two formal groups.  
 Thus $\Coord$ is also covariant with respect to any morphism in the subcategory $\FG_\isog(k)$.  

 Let $\NCoh \co \FG_\isog \to \Set$ be the wide functor 
 \[
  \CG / k \mapsto \{\text{norm-coherent coordinates on $\CF_\univ(\CG) / E_n$}\} 
 \]
 where $\CF_\univ(\CG)$ is a choice of universal deformation of $\CG$ as in 
 \eqref{subsec:univ}.  
 Its ``wideness,'' in the same sense as above, follows from 
 Proposition \ref{prop:fun} and the discussion in \eqref{subsec:univ}.  
\end{subsec}

\begin{thm}
 \label{thm:ideal}
 The natural transformation $\rho \co \NCoh \to \Coord$ of wide functors by 
 restricting a coordinate on $\CF_\univ(\CG)$ to $\CG$ is an isomorphism.  
 Moreover, it satisfies Galois descent: given $\CG / k$ in $\FG_\isog$ 
 and a Galois extension $K / k$, the following commutes, where the 
 vertical maps take fixed points under the Galois action.  
 \[
  \begin{tikzpicture}[scale=.88]
   \node (LT) at (0, 3) {$\NCoh(\CG \times_k K)$};
   \node (RT) at (4, 3) {$\Coord(\CG \times_k K)$};
   \node (LB) at (0, 0) {$\NCoh(\CG)$};
   \node (RB) at (4, 0) {$\Coord(\CG)$};
   \draw [->] (LT) -- node [above] {$\scriptstyle \sim$} (RT);
   \draw [->] (LB) -- node [above] {$\scriptstyle \sim$} (RB);
   \draw [->] (LT) -- (LB);
   \draw [->] (RT) -- (RB);
  \end{tikzpicture}
 \]
 Moreover, this diagram is natural in $\CG / k$ and $K / k$.  
\end{thm}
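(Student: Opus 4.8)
The plan is to establish the three assertions in order: that $\rho$ is a bijection on each object, that $\rho$ respects the wide-functor structure, and that $\rho$ intertwines the Galois actions with the stated fixed points.

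Bijectivity is essentially a repackaging of Lemma \ref{klem} and Proposition \ref{prop:nc}. First note that $\rho$ is well defined: a coordinate on $\CF_\univ(\CG)$ reduces modulo the maximal ideal $I$ of $E_n$ to a coordinate on the special fiber, which is $\CG$ because $F_\univ$ has deformation structure $(\id, \id)$. For injectivity, two norm-coherent coordinates on $\CF_\univ(\CG)$ with the same restriction to $\CG$ have $\star$-isomorphic associated formal group laws --- the change of coordinates between them reduces to the identity on $\CG$ --- so they agree by the uniqueness in Lemma \ref{klem}. For surjectivity, lift a given coordinate $x_{_G}$ on $\CG$ arbitrarily to $\CF_\univ(\CG)$ and apply Proposition \ref{prop:nc}; the resulting norm-coherent coordinate is $\star$-isomorphic to the chosen lift and hence again restricts to $x_{_G}$.

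Next I would check that $\rho$ is a natural transformation of wide functors, that is, that it commutes with the structure maps along both squares of \eqref{widemor}. Along the right square (base change of the ground field and pullback along an isomorphism over $k$) this is immediate: reduction modulo the maximal ideal commutes with the base changes $E_n(\CG') \to E_n(\CG' \times_{k'} k)$ of \eqref{subsec:univ}, and base change preserves norm coherence by Proposition \ref{prop:fun}\,(i). Along the left square, an isogeny $\psi \co \CG \to \CG^{(p^r)}$ over $k$ acts on $\NCoh$ as in \eqref{subsec:univ}: a norm-coherent coordinate $x$ on $\CF_\univ(\CG)$ is pushed forward along $\psi^{(p^r)\,\star}_\univ$, and since $g_{_H} = \id$ this pushforward is pulled back from a coordinate $\hat x$ on $\CF_\univ(\CG^{(p^r)})$, which one checks is norm-coherent by Proposition \ref{prop:fun}\,(ii). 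The compatibility $\rho(\hat x) = \Coord(\psi)(\rho(x))$ then holds because the norm construction commutes with reduction to the special fiber, where it agrees with pushing a coordinate forward along $\Frob^r$ by \eqref{defofrob} and \eqref{norm} --- the observation recalled at the start of \eqref{subsec:ncdef}.

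Finally, for Galois descent, let $K / k$ be Galois with group $\Gamma = \Gal(K / k)$, acting on $\CG \times_k K$ through the second factor. This induces compatible $\Gamma$-actions on $\Coord(\CG \times_k K)$ and --- via the action on $E_n(\CG \times_k K) = \BW K \lb u_1, \dots, u_{n - 1} \rb$ --- on $\CF_\univ(\CG \times_k K)$ and hence on $\NCoh(\CG \times_k K)$, and $\rho$ is $\Gamma$-equivariant by construction. It remains to identify the fixed points: $\Coord(\CG \times_k K)^\Gamma = \Coord(\CG)$ is ordinary Galois descent for the extension of fields $K / k$, while on the other side $(\BW K)^\Gamma = \BW k$ (seen via the truncations $\BW_m K$ and finite subextensions of $K$), so a $\Gamma$-invariant coordinate on $\CF_\univ(\CG \times_k K)$ descends to a coordinate $x_0$ on $\CF_\univ(\CG)$, and $x_0$ is norm-coherent because its base change to $K$ is and norm-coherent lifts are unique by Lemma \ref{klem}; thus $\NCoh(\CG \times_k K)^\Gamma = \NCoh(\CG)$. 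Commutativity of the square is then immediate, and its naturality in $\CG / k$ and $K / k$ follows formally from the naturality of $\rho$ already proved together with the functoriality of passing to fixed points. The point that requires real care --- and the recurring device in the last two steps --- is that norm coherence is manifestly preserved by base change but not obviously reflected by it; the way around this, used both for the isogeny structure maps (over $A_r$) and for the Galois descent, is to combine Proposition \ref{prop:fun}\,(i) with the uniqueness of norm-coherent lifts in Lemma \ref{klem}.
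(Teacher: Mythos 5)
Your proposal is correct and follows essentially the same route as the paper, whose own proof is a three-line appeal to Proposition \ref{prop:nc} for objectwise bijectivity, to stability of the condition \eqref{l=f} under Galois actions for the descent square, and to a ``straightforward to check'' for naturality. What you have written is a careful expansion of exactly those three points (with the useful extra observation that descent of norm coherence along $E_n(\CG) \to E_n(\CG \times_k K)$ is obtained by combining Proposition \ref{prop:fun}\,(i) with the uniqueness in Lemma \ref{klem}), so no new ideas beyond the paper's are introduced and none are missing.
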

\begin{proof}
 On each object in $\FG_\isog$, the natural transformation $\rho$ is an isomorphism by 
 Proposition \ref{prop:nc}, and the descent is clear since the condition 
 \eqref{l=f} of norm coherence is stable under Galois actions.  
 Each of the naturality properties is straightforward to check.  
\end{proof}

\section{Norm coherence and \Hoo\,complex orientations}
\label{sec:ortn}

\begin{subsec}{\tt{Introduction}}
 \label{subsec:intro}
 Given a Morava E-theory spectrum $\bE$, consider its complex orientations, or, 
 more precisely, homotopy multiplicative maps $MU\<0\> \to \bE$.  A necessary 
 and sufficient condition for such an orientation to be \Hoo\,is that its 
 corresponding coordinate on the formal group of $\bE$ is norm-coherent.  Ando 
 showed this for E-theories associated to the Honda formal groups over $\BF_p$ 
 \cite[Theorem 4.1.1]{Ando95}.  There, the norm-coherence condition \eqref{l=f} 
 boils down to the identity $[p] = \, f_p$ (cf.~\eqref{nc} and \eqref{ex:Honda}).  Moreover, he established the existence and uniqueness of 
 coordinates, hence orientations, with the desired property [\ib, Theorem 
 2.6.4].  
 
 In fact, to show that norm coherence is necessary and sufficient for \Hoo 
 orientations, Ando's proof does not depend on the choice of the formal groups 
 being the Honda formal groups (see [\ib, Lemma 4.4.4]).  However, his setup does 
 require them be defined over $\BF_p$ so that the relative $p^r$-power Frobenius 
 is an endomorphism for every $r \geq 0$ (cf.~[\ib, Proposition 2.5.1] and 
 Remark \ref{rmk:endo}).  
 
 With results in sufficient generality about level structures on formal groups 
 from \cite{Str97}, Ando, Hopkins, and Strickland extended the applicability of 
 the above condition for \Hoo\,orientations: $MU\<0\>$ generalizes to 
 $MU\<2 k\>$, $k \leq 3$, and $\bE$ generalizes to any even periodic 
 \Hoo\!-ring spectum whose zeroth homotopy is a $p$-regular admissible local 
 ring with perfect residue field of characteristic $p$ and whose formal group is 
 of finite height \cite[Proposition 6.1]{AHS04}.  They did this by first 
 reformulating Ando's condition so that in particular it applies to E-theories 
 associated to formal groups over any perfect field of positive characteristic 
 [\ib, Proposition 4.13].  
 
 Based on this general condition, they established the existence and uniqueness 
 of \Hoo\,$MU\<6\>$-orientations for \Hoo\,elliptic spectra, called the sigma 
 orientations, from corresponding norm-coherent cubical structures of elliptic 
 curves [\ib, Proposition 16.5].  However, when the elliptic spectrum represents 
 an E-theory associated to the formal group of a supersingular elliptic curve, 
 such an orientation does not factor through $MU\<4\>$ due to obstruction from 
 Weil pairings (see \cite[proof of Theorem 1.4]{Wpair}).  Thus, in this case, we 
 cannot deduce the existence and uniqueness of \Hoo\,$MU\<2 k\>$-orientations for 
 $0 \leq k \leq 2$ from the sigma orientation.  
\end{subsec}

\begin{subsec}{\tt{Set-up}}
 \label{subsec:setup''}
 Let $\bE$ be a Morava E-theory spectrum, with 
 $\GE = \CF_\univ(\CG)$ for some $\CG / k$ whose group law is as in \eqref{subsec:setup'}.  We 
 will show the existence and uniqueness of \Hoo\,$MU\<0\>$-orientations 
 for $\bE$ by combining Proposition \ref{prop:nc} 
 with Ando, Hopkins, and Strickland's 
 condition for \Hoo orientations.  Specifically, we need only check that their 
 \cite[4.14]{AHS04} and our definition \eqref{l=f} for norm 
 coherence agree.  
\end{subsec}

\begin{subsec}{\tt{Descent for level structures on Lubin-Tate formal groups}}
 \label{subsec:descent}
 We carry out the needed comparison by recalling the canonical 
 descent data for level structures on $\GE = \CF_\univ(\CG)$ from 
 [\ib, Part 3].  Since $\CG$ is over $k$ of characteristic $p$, the finite 
 subgroups $\CH$ of $\GE$ 
 must be of degree $p^r$ for some $r \geq 0$.  
 
 \begin{defn}[{cf.~[\ib, Definitions 3.1, 9.9, Proposition 10.10\,(i), 12.5]}]
  \label{def:level}
  Let $A$ be an ``abstract'' finite abelian group 
  of order $p^r$.  Let 
  $\SE = \Spf \PE$ and 
  $T = \Spf R$ with $R$ as in \eqref{subsec:setup'}.  
  Let $i \co T \to \SE$ be a 
  morphism of formal schemes, 
  faithfully flat and locally of finite presentation, 
  which classifies a deformation of $\CG / k$ to $R$.  
  Write $\CA_T$ for the constant formal group scheme of $A$ over $T$.  
  A morphism 
  \[
   \ell \co \CA_T \to i^* \GE 
  \]
  of formal groups over $T$, 
  equivalent to a group homomorphism $\phi_{_\ell} \co A \to i^* \GE(T)$, 
  is a {\em level $A$-structure on $\GE$} 
  if the effective Cartier divisor $\CD_\ell \ce \sum_{a \in A} [\phi_{_\ell}(a)]$ of 
  degree $p^r$ 
  is a subgroup of $i^* \GE$.  
 \end{defn}
 
 \begin{rmk}
  \label{rmk:level}
  Note that a level $A$-structure $\ell$ on $\GE$ uniquely corresponds 
  to a finite subgroup $\CH = \CD_\ell$, which is different from the scheme-theoretic image 
  of $\CA_T$ under $\ell$ (the latter automatically a subgroup, 
  but possibly of smaller degree).  
  Automorphisms of $A$ correspond to automorphisms of $\CH$ 
  (cf.~[\ib, Definition 3.1\,(3)]).  
 \end{rmk}
 
 \begin{defn}[{cf.~[\ib, Definition 3.9, Remark 3.12]}]
  \label{def:psil}
  Let $\ell \co \CA_{\,\Spf R} \to i^* \GE$ be a level $A$-structure on $\GE$ as above.  
  Define $\psi_\ell^{_\bE} \co \PE \to R$ to be the composite 
  \[
   \PE \xrightarrow{~D_{p^r}} \PE^{(B\Sigma_{p^r})_+} 
   \to \PE^{(B\Sigma_{p^r})_+} / I_\tr \xrightarrow{\,\A_r} R 
  \]
  where the power operation $D_{p^r}$ arises from the \Hoo\!-ring structure of $\bE$ 
  \eqref{subsec:EH}, 
  $I_\tr$ is the ideal generated by the images of transfers 
  from proper subgroups of $\Sigma_{p^r}$, and $\A_r$ classifies the subgroup 
  of $i^* \GE$ corresponding to $\ell$ 
  (\ref{rmk:level}, \ref{subsec:E}).  
 \end{defn}

 \begin{rmk}
  \label{rmk:psil}
  In the presence of a level structure as in Definition \ref{def:psil}, 
  the structure morphism $i$ of $T$ over $\SE$ in Definition \ref{def:level} 
  is given by the classifying map 
  \[
   \A \co A_0 \xrightarrow{\,s_r} A_r \xrightarrow{\,\A_r} R 
  \]
  from Propositions \ref{prop:LT} and \ref{prop:S}, 
  while $\psi_\ell^{_\bE}$ is precisely the classifying map 
  \[
   \A' \co A_0 \xrightarrow{\,t_r} A_r \xrightarrow{\,\A_r} R 
  \]
  (cf.~\cite[Theorem B]{cong} for the identification with $t_r$).  
 \end{rmk}
 
 \begin{defn}[{cf.~\cite[3.13-3.15]{AHS04}}]
  \label{def:psig}
  Let $\bF \ce \bE^{X_+}$ and $\Bf \co \bE \to \bF$ be the natural map of 
  \Hoo\!-ring spectra.  
  Given any level $A$-structure $\ell \co \CA_T \to i^* \GE$ on $\GE$, 
  let $\ell'$ be the unique level $A$-structure on $\GF$ 
  induced by $\Bf$,\footnote{{\em Level 
  $A$-structures on $\GF$} are 
  defined analogously to those on $\GE$ as in Definition \ref{def:level}.} 
  so that the following diagram commutes 
  (with all but the front-left and back-left squares cartesian).  
  \[
   \begin{tikzpicture}[scale=.88]
    \node (AA) at (2, 6) {$\CA_{T'}$};
    \node (AB) at (6, 6) {$i'^* \GF$};
    \node (AC) at (10, 6) {$\GF$};
    \node (BA) at (0, 4) {$\CA_T$};
    \node (BB) at (4, 4) {$i^* \GE$};
    \node (BC) at (8, 4) {$\GE$};
    \draw [->] (AA) -- node [above] {$\scriptstyle \ell'$} (AB);
    \draw [->] (AB) -- (AC);
    \draw [->] (BA) -- node [above] {$\scriptstyle \quad\ell$} (BB);
    \draw [->] (BB) -- (BC);
    \draw [->] (AA) -- (BA);
    \draw [->] (AB) -- (BB);
    \draw [->] (AC) -- node [right] {$\scriptstyle \Gf$} (BC);
    \node (AA') at (2, 3) {$T'$};
    \node (AB') at (6, 3) {$i^* \SF$};
    \node (AC') at (10, 3) {$\SF$};
    \node (BA') at (0, 1) {$T$};
    \node (BB') at (4, 1) {$T$};
    \node (BC') at (8, 1) {$\SE$};
    \draw [double] (AA') -- (AB');
    \draw [->] (AB') -- node [above] {$\scriptstyle i'\quad$} (AC');
    \draw [double] (BA') -- (BB');
    \draw [->] (BB') -- node [above] {$\scriptstyle i$} (BC');
    \draw [->] (AA') -- (BA');
    \draw [->] (AB') -- (BB');
    \draw [->] (AC') -- node [right] {$\scriptstyle \Sf$} (BC');
    \draw [->] (AA) -- (AA');
    \draw [->] (AB) -- (AB');
    \draw [->] (AC) -- (AC');
    \draw [->] (BA) -- (BA');
    \draw [->] (BB) -- (BB');
    \draw [->] (BC) -- (BC');
   \end{tikzpicture}
  \]
  Let $\psi_{\ell'}^{_\bF} \co T' \to \SF$ be the morphism 
  analogous to $\psi_\ell^{_\bE}$ in Definition \ref{def:psil}, obtained 
  by the naturality of power operations on $\bE^0(X)$.  
  Define $\psi_\ell^{_{\bF / \bE}}$ to be the unique $T$-morphism 
  which fits into the following commutative diagram.  
  \begin{equation}
   \label{frob'}
   \begin{tikzpicture}[baseline={([yshift=-10pt]current bounding box.north)}, scale=.88]
    \node (LT) at (0, 3) {$\SF$};
    \node (MLT) at (4, 3) {$i^* \SF$};
    \node (MRT) at (8, 3) {$(\psi_\ell^{_\bE})^* \SF$};
    \node (RT) at (12, 3) {$\SF$};
    \node (LB) at (0, 0) {$\SE$};
    \node (MLB) at (4, 0) {$T$};
    \node (MRB) at (8, 0) {$T$};
    \node (RB) at (12, 0) {$\SE$};
    \draw [<-] (LT) -- (MLT);
    \draw [->] (MLT) -- node [above] {$\scriptstyle \psi_\ell^{_{\bF / \bE}}$} (MRT);
    \draw [->] (MRT) -- (RT);
    \draw [->] (LT) -- node [left] {$\scriptstyle \Sf$} (LB);
    \draw [->] (MLT) -- (MLB);
    \draw [->] (MRT) -- (MRB);
    \draw [->] (RT) -- node [right] {$\scriptstyle \Sf$} (RB);
    \draw [<-] (LB) -- node [above] {$\scriptstyle i$} (MLB);
    \draw [double] (MLB) -- (MRB);
    \draw [->] (MRB) -- node [above] {$\scriptstyle \psi_\ell^\bE$} (RB);
    \draw [->] (MLT) to [out = 30, in = 150] node [above] {$\scriptstyle \psi_{\ell'}^\bF$} (RT);
    \node at (8.5, 2.5) {$\lrcorner$}; 
    \node at (3.5, 2.5) {$\llcorner$}; 
   \end{tikzpicture}
  \end{equation}
  In particular, when $\bF = \bE^{(\BC \BP^\infty)_+}$, 
  write $\psi_\ell^{_{\bF / \bE}}$ as 
  \[
   \psi_\ell^{_{\CG / \bE}} \co i^* \GE \to (\psi_\ell^{_\bE})^* \GE 
  \]
 \end{defn}
 
 \begin{rmk}
  \label{rmk:psig}
  Let $\bF = \bE^{(\BC \BP^\infty)_+}$.  
  When $A = \BZ / p$, the diagram \eqref{frob'} lifts \eqref{frob}.  
  More generally, let $\CH \subset i^* \GE$ 
  correspond to $\ell$ as in Remark \ref{rmk:level}.  
  Comparing \eqref{frob'} to the universal example \eqref{univex} and 
  Remark \ref{rmk:psil}, we see that $\psi_\ell^{_{\CG / \bE}}$ is precisely 
  the isogeny $l_{_H}$ from \eqref{subsec:l_H} if we assume without loss 
  of generality that the $\star$-isomorphism \eqref{wlog} is the identity.  
 \end{rmk}
\end{subsec}

\begin{subsec}{\tt{Norm maps}}
 In view of \cite[Theorem 3.25]{AHS04}, 
 we have compared above the ingredients that constitute descent data for 
 level structures on $\GE$---level structures $\ell$, 
 classifying maps $i$ and $\psi_\ell^{_\bE}$, 
 isogenies $\psi_\ell^{_{\CG / \bE}}$---with 
 corresponding terms from the earlier sections of this paper.  
 There is one more and key ingredient that goes into the condition 
 [\ib, 4.14] for \Hoo\,$MU\<0\>$-orientations.  
 
 \begin{defn}[{cf.~[\ib, Definitions 10.1, 10.9]}]
  Let $\psi \co \CG \to \CG'$ be an isogeny of formal groups with kernel $\CK$.  
  Let $\mu, \pi \co \CG \times \CK \to \CG$ be the multiplication, 
  projection maps, and $q \co \CG \to \CG / \CK$ be the quotient map, as in 
  \eqref{subsec:quotient}.  
  Define $N_\psi \co \CO_G \to \CO_{G'}$ to be the horizontal composite 
  \begin{equation}
   \label{factor''}
   \begin{tikzpicture}[baseline={([yshift=-10pt]current bounding box.north)}, scale=.88]
    \node (L) at (0, 0) {$\CO_G$};
    \node (ML) at (2, -1) {$\CO_{G \times K}$};
    \node (MR) at (4, 0) {$\CO_{G / K}$};
    \node (R) at (5.3, 0.06) {$\xrightarrow{\sim} \CO_{G'}$};
    \node (T) at (4, -2) {$\CO_G$};
    \node (B) at (4, -4) {$\CO_{G \times K}$};
    \draw [->] (MR) -- node [right] 
          {$\scriptstyle q^*$} (T);
    \draw [dashed, ->] (L) -- (MR);
    \draw [->] (L) -- node [left] 
          {$\scriptstyle \text{\raisebox{-.5cm}{$\mu^*$} \hskip -.2cm}$} (ML);
    \draw [->] (ML) -- node [left] 
          {$\scriptstyle \text{\raisebox{-.4cm}{$\Norm_{\pi^*}$ \hskip -.3cm}}$} (T);
    \draw [transform canvas={xshift=0.5ex}, ->] (T) -- node [left] 
          {$\scriptstyle \mu^*\,$} (B);
    \draw [transform canvas={xshift=-0.5ex}, ->] (T) -- node [right] 
          {$\scriptstyle ~\pi^*$} (B);
   \end{tikzpicture}
  \end{equation}
  where the vertical maps exhibit $\CO_{G / K}$ as an equalizer, 
  $\Norm_{\pi^*}$ sends $a$ to the determinant of multiplication by $a$ 
  on $\CO_{G \times K}$ as a finite free $\CO_G$-module via $\pi^*$, 
  and the factorization through $\CO_{G / K}$ 
  was shown, \eg, in \cite[Theorem 19]{Str97}.  
 \end{defn}
 
 \begin{rmk}
  \label{rmk:norm'}
  Since $q \circ \mu = q \circ \pi$, 
  we have $\Norm_{\pi^*} \circ \mu^* = q^* \circ \Norm_{q^*}$ 
  (by an argument similar to the proof of the factorization 
  mentioned above).  Thus the dashed arrow in \eqref{factor''} is 
  $\Norm_{q^*}$ by uniqueness from the universal property of an equalizer.  
  
  Suppose that the isogeny $\psi$ is over 
  a field $k$ of characteristic $p$, and is hence of degree $p^r$ for some $r \geq 0$.  
  Comparing [\ib, Theorem 19\,(i)] and Lemma \ref{lem:norm}, 
  we see that $N_\psi$ is precisely the map $\Lambda_\psi^*$ in \eqref{factor'}.  
 \end{rmk}
 
 \begin{rmk}
  \label{rmk:norm}
  Let $\psi$ be the isogeny $\psi_\ell^{_{\CG / \bE}} 
  \co i^* \GE \to (\psi_\ell^{_\bE})^* \GE$ over $R$ 
  from Definition \ref{def:psig}.  
  Let $x$ be any coordinate on $i^* \GE$.  
  In view of Remark \ref{rmk:level}, we have from \cite[10.11]{AHS04} that 
  \[
   \psi^* N_\psi (x) = \prod_{a \in A} T_a^* (x) 
   = \prod_{Q \in \CH(R)} \big( x \underset{i^* G_{_\bE}}{+} x(Q) \big) 
  \]
  where $T_a \co i^* \GE \to i^* \GE$ translates any $R$-point $P$ on $i^* \GE$ to 
  $P + Q$, with $Q = \phi_{_\ell}(a)$.  
  Comparing this to \eqref{x_H}, with $\CF = i^* \GE$, we see that 
  \[
   \psi^* N_\psi (x) = f_{_H}^* (x_{_H}) \hskip .5cm 
  \]
  Now, given any coordinate 
  $s$ on $\GE$, the condition [\ib, 4.14] states that 
  \[
   (\psi_\ell^{_\bE})^* s = N_{\psi_\ell^{_{\CG / \bE}}} (i^* s) \hskip -.5cm 
  \]
  Pulling this back along $\psi_\ell^{_{\CG / \bE}}$ and writing $x \ce i^* s$, 
  we get an equivalent identity 
  \begin{equation}
   \label{l=f'}
   l_{_H}^* (x') = f_{_H}^* (x_{_H}) 
  \end{equation}
  where $l_{_H} = \psi_\ell^{_{\CG / \bE}}$ from Remark \ref{rmk:psig}, 
  and $x' \ce (\psi_\ell^{_\bE})^* s = \A_r^* t_r^* s$ from Remark \ref{rmk:psil}.  
  In view of (\ref{l_H}, \ref{fgl}), 
  we see that \eqref{l=f'} is equivalent to \eqref{l=f}.  
  This shows that [\ib, 4.14] and our norm-coherence condition agree 
  (cf.~\eqref{norm''}).  
 \end{rmk}
\end{subsec}

\begin{cor}
 \label{cor}
 Let $\bE$, $\GE$, and $\CG$ be as in \eqref{subsec:setup''}.  
 Given any coordinate $x_{_G}$ on $\CG$, there exists a unique coordinate $x$ on 
 $\GE$ lifting $x_{_G}$ such that its corresponding 
 $MU\<0\>$-orientation for $\bE$ is \Hoo.  
\end{cor}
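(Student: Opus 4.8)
The plan is to combine Proposition~\ref{prop:nc} with the comparison carried out in (\ref{subsec:descent}--\ref{rmk:norm}), using the bijection between $MU\<0\>$-orientations for $\bE$ and coordinates on $\GE$ recalled in \S \ref{sec:general}. Under that bijection, an $MU\<0\>$-orientation for $\bE$ is \Hoo\ precisely when the corresponding coordinate $s$ on $\GE$ is norm-coherent in the sense of Definition~\ref{def:nc}. To see this, I would first invoke \cite[Proposition 6.1]{AHS04}: since $\bE$ is an even-periodic \Hoo-ring spectrum whose $\PE$ is a $p$-regular admissible local ring with perfect residue field $k$ of characteristic $p$ and whose $\GE$ has finite height, such an orientation is \Hoo\ if and only if $s$ satisfies the condition \cite[4.14]{AHS04}; and by Remark~\ref{rmk:norm} that condition, imposed over all level structures, is equivalent to the identity \eqref{l=f} over all finite subgroups of $\GE$, that is, to $s$ being norm-coherent.

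With this in place, the corollary reduces to producing a unique norm-coherent coordinate on $\GE$ lifting $x_{_G}$. I would apply Proposition~\ref{prop:nc} with $R = \PE$, with $G$ the formal group law of $\CG$ in the coordinate $x_{_G}$, and with $F$ the formal group law of $\GE = \CF_\univ(\CG)$ attached to some coordinate lifting $x_{_G}$; such a coordinate exists, since one may start from an arbitrary coordinate on $\GE$ and compose it with a change of coordinate over $\PE$ lifting the appropriate change of coordinate on $\CG$. Proposition~\ref{prop:nc} yields a unique norm-coherent coordinate $x$ on $\GE$ whose formal group law is $\star$-isomorphic to that of $F$; moreover $x$ itself lifts $x_{_G}$, since in the construction behind Lemma~\ref{klem} --- here in the universal case, with classifying map the identity --- $x$ is obtained from the chosen lift by composing with changes of coordinate congruent to the identity modulo $\fm$. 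For the uniqueness among \emph{all} coordinates on $\GE$ lifting $x_{_G}$, note that any two such differ by a change of coordinate congruent to the identity modulo $\fm$, hence have $\star$-isomorphic formal group laws; so any norm-coherent coordinate lifting $x_{_G}$ has formal group law $\star$-isomorphic to that of $F$, and therefore equals $x$ by the uniqueness clause of Proposition~\ref{prop:nc}. The $MU\<0\>$-orientation attached to $x$ is then the unique \Hoo\ orientation lifting $x_{_G}$, which is the assertion of the corollary.

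The substantive content is already in hand: Proposition~\ref{prop:nc} supplies existence and uniqueness of norm-coherent coordinates, and Remark~\ref{rmk:norm} identifies \eqref{l=f} with the condition \cite[4.14]{AHS04}. The remaining steps are bookkeeping --- checking that the orientation--coordinate bijection transports the ``lifting $x_{_G}$'' clause correctly to the special fibre $\CG$, and that $\bE$ meets the standing hypotheses of \cite[Proposition 6.1]{AHS04}. The only mild obstacle I anticipate is keeping track of the several $\star$-isomorphism classes simultaneously; it is handled by the observation that two coordinates on $\GE$ with the same reduction to $\CG$ automatically have $\star$-isomorphic formal group laws.
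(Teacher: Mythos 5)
Your proposal is correct and follows essentially the same route as the paper: the published proof likewise combines Remark~\ref{rmk:norm} (identifying the condition \cite[4.14]{AHS04} with the norm-coherence condition \eqref{l=f}) with Proposition~\ref{prop:nc}, and applies \cite[Proposition 6.1]{AHS04} after noting that $p$ is not a zero-divisor in $\PE$. Your additional bookkeeping---that any two coordinates on $\GE$ lifting $x_{_G}$ have $\star$-isomorphic formal group laws, so the uniqueness clause of Proposition~\ref{prop:nc} really does give uniqueness among all lifts of $x_{_G}$---is a correct filling-in of details the paper leaves implicit.
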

\begin{proof}
 In view of Remark \ref{rmk:norm}, 
 the corollary follows from Proposition \ref{prop:nc}.  
 In particular, as $p$ is not a zero-divisor in $\PE$, 
 we may apply \cite[Proposition 6.1]{AHS04} 
 for \Hoo\,$MU\<2 k\>$-orientations with $k = 0$ 
 (cf.~[\ib, discussion following 1.6]).  
\end{proof}

\renewcommand\refname{}
\newcommand{\AX}[1]{\href{http://arxiv.org/abs/#1}{arXiv:#1}}
\renewcommand{\MR}[1]{\href{http://www.ams.org/mathscinet-getitem?mr=#1}{MR#1}}
\wt{.}\vspace{-.8in}

\end{document}